\newtheorem{theorem}{Theorem}[section]
\newtheorem{lemma}[theorem]{Lemma}
\theoremstyle{definition}
\theoremstyle{remark}
\newtheorem{rmk}[theorem]{Remark}
\numberwithin{equation}{section}
\newcommand{\bx}{{\bf x} }
\newcommand{\p}{\partial}
\newcommand{\eps}{\varepsilon}
\newcommand{\be}{\begin{equation}}
\newcommand{\ee}{\end{equation}}
\newcommand{\ba}{\begin{array}}
\newcommand{\ea}{\end{array}}
\newcommand{\bea}{\begin{eqnarray}}
\newcommand{\eea}{\end{eqnarray}}
\newcommand{\beas}{\begin{eqnarray*}}
\newcommand{\eeas}{\end{eqnarray*}}
\begin{document}

\title[Super-resolution of TSSP for Dirac equation]{Super-resolution of time-splitting methods for the Dirac equation in the nonrelativistic regime}


\author{Weizhu Bao}
\address{Department of Mathematics, National University of Singapore, Singapore
	119076\\
	URL: {\tt http://blog.nus.edu.sg/matbwz/)}}
\email{matbaowz@nus.edu.sg}
\thanks{We acknowledge support from the Ministry of Education of Singapore grant  R-146-000-247-114 (W. Bao and J. Yin) and the NSFC grant No. 11771036 and 91630204 (Y. Cai).}

\author{Yongyong Cai}
\address{School of Mathematical Sciences, Beijing Normal University, 100875, P.R. China\\
and Beijing Computational Science Research Center,
	Beijing 100193, P. R. China}
\email{yongyong.cai@bnu.edu.cn}
\thanks{This work was partially done when the first author was visiting the Courant Institute for
	Mathematical Sciences in 2018.  Part of this work was done when the authors
visited the Institute for Mathematical Sciences, National University of Singapore, in 2019.}

\author{Jia Yin}
\address{NUS Graduate School for Integrative Sciences and Engineering (NGS),
	National University of Singapore, Singapore 117456}
\email{matyinj@nus.edu.sg}

\subjclass[2010]{Primary 35Q41, 65M70, 65N35, 81Q05}
\keywords{Dirac equation, super-resolution, nonrelativistic regime, time-splitting, uniform error bound}

\date{}

\dedicatory{}

\begin{abstract}
	We establish error bounds of the Lie-Trotter splitting ($S_1$) and Strang splitting ($S_2$) for the Dirac equation in the nonrelativistic regime in the absence of external magnetic potentials, with a small parameter $0<\eps\leq 1$ inversely proportional to the speed of light. In this regime, the solution propagates waves with $O(\eps^2)$  wavelength in time. Surprisingly, we find out that the splitting methods exhibit super-resolution, i.e. the methods can capture the solutions accurately even if the time step size $\tau$ is independent of $\eps$, while the wavelength in time is at $O(\eps^2)$.  $S_1$ shows $1/2$ order convergence uniformly with respect to $\eps$, by establishing that there are two independent error bounds $\tau + \eps$ and $\tau + \tau/\eps$. Moreover, if  $\tau$ is non-resonant, i.e. $\tau$ is away from certain region determined by $\eps$, $S_1$ would yield an improved uniform
	first order $O(\tau)$ error bound. In addition, we show $S_2$ is uniformly convergent with 1/2 order rate for general time step size $\tau$ and uniformly convergent with $3/2$ order rate for non-resonant time step size. Finally, numerical examples are reported to validate our findings.
\end{abstract}

\maketitle

\section{Introduction}\setcounter{equation}{0}
The splitting technique  introduced by Trotter in 1959 \cite{Trotter}  has been widely applied in analysis and numerical simulation \cite{ABB, BJM, BJM2, Carles, CG}, especially in computational quantum physics. In the Hamiltonian system and general ordinary differential equations (ODEs), the splitting approach has been shown to preserve the structural/geometric properties \cite{Hairer06,Verlet} and is superior in many applications.  Developments of splitting type methods in solving partial differential equations (PDEs) include utilization in Schr\"{o}dinger/nonlinear Schr\"{o}dinger equations \cite{ABB, BJM, BJM2, Carles, CG,Lubich,Tha}, Dirac/nonlinear Dirac equations \cite{BCJT2, BCJY, BY, LLS}, Maxwell-Dirac system \cite{BL, HJM}, Zakharov system \cite{BS, BSW, Gauckler, JMZ, JZ}, Stokes equation \cite{CHP}, and Enrenfest dynamics \cite{FJS}, etc.

When dealing with oscillatory problems,  the splitting method usually performs much better than traditional numerical methods  \cite{Hairer06,BJM}.
For instance,  in order to obtain ``correct'' observables of the Schr\"{o}dinger equation in the semiclassical regime, the time-splitting spectral method requires much weaker constraints on time step size and mesh size than the  finite difference methods \cite{BJM}. Similar properties have been observed for the nonlinear Schr\"{o}dinger equation (NLSE)/Gross-Pitaevskii equation (GPE) in the semiclassical regime \cite{ABB} and the Enrenfest dynamics \cite{FJS}. However, in general, splitting methods still suffer from the mesh size/time step constraints related to the high frequencies  in the aforementioned problems \cite{Bad,DT,JL}, i.e. in order to resolve a wave one needs to use a few grid points per wavelength. In this paper, we report a surprising finding that the splitting methods are uniformly accurate (w.r.t. the rapid oscillations),  when applied to the Dirac equation in the nonrelativistic  regime without external magnetic field. This fact reveals that there is no mesh size/time step restriction for splitting methods in this situation, e.g. the splitting methods have {\bf super-resolution} independent of the wavelength, which is highly nontrivial. In the rest of the paper, we will discuss the oscillatory Dirac equation in the nonrelativistic regime,  with conventional time splitting numerical approach and its super-resolution properties.

Proposed by British physicist Paul Dirac in 1928 \cite{Dirac}, the Dirac equation has now been extensively applied in the  study of the structures and/or dynamical properties of graphene, graphite, and other two-dimensional (2D) materials \cite{AMP, FW, NGPNG, NGMJZ}, as well as the relativistic effects of molecules in super intense lasers, e.g., attosecond lasers \cite{BCLL, FLB}.  Mathematically, the $d$-dimensional ($d = 1, 2, 3$) Dirac equation with external electro-magnetic potentials   \cite{BCJT2, BY} for the complex spinor vector field $\Psi:=\Psi(t,\bx)=(\psi_1(t,\bx),\psi_2(t,\bx), \psi_3(t,\bx), \psi_4(t,\bx))^T\in\mathbb{C}^4$ can be written as
\be\label{eq:dirac4}
i\partial_t\Psi =  \left(- \dfrac{i}{\eps}\sum_{j = 1}^{d}\alpha_j\partial_j +
\dfrac{1}{\varepsilon^2}\beta\right)\Psi
+ \left(V(t, \mathbf{x})I_4 - \sum_{j = 1}^{d}A_j(t, \mathbf{x})\alpha_j\right)\Psi,
\ee
for $\mathbf{x}\in\mathbb{R}^d$, $t>0$, with initial value
\be \label{eq:initial}
\Psi(t=0,\bx)=\Psi_0(\bx),\qquad \bx\in{\mathbb R}^d,
\ee
where $i=\sqrt{-1}$,  $t$ is time, $\bx=(x_1,\ldots,x_d)^T\in {\mathbb R}^d$
is the spatial coordinate vector, $\partial_j=\frac{\partial}{\partial x_j}$ ($j=1,\ldots,d$), $V:=V(t,\bx)$ and $A_j := A_j(t, \bx)$ ($j = 1, \ldots, d$) are the given real-valued electric and magnetic potentials,
respectively, $\eps \in (0,1]$ is a dimensionless parameter
inversely proportional to the speed of light. There are two important regimes for the Dirac equation \eqref{eq:dirac4}: the relativistic case $\eps=O(1)$ (wave speed
is comparable to the speed of light) and the nonrelativistic case $\eps\ll1$ (wave speed
is much less than the speed of light).
$I_n$ is the $n\times n$ identity matrix for $n\in {\mathbb{N}^*}$, and the $4\times 4$
matrices $\alpha_1$, $\alpha_2$, $\alpha_3$ and $\beta$ are
\be
\begin{aligned}
\label{alpha}
&\alpha_1=\left(\begin{array}{cc}
	\mathbf{0} & \sigma_1  \\
	\sigma_1 & \mathbf{0}  \\
\end{array}
\right),\qquad
\alpha_2=\left(\begin{array}{cc}
	\mathbf{0} & \sigma_2 \\
	\sigma_2 & \mathbf{0} \\
\end{array}
\right), \\
&\alpha_3=\left(\begin{array}{cc}
	\mathbf{0} & \sigma_3 \\
	\sigma_3 & \mathbf{0} \\
\end{array}
\right),\qquad
\beta=\left(\begin{array}{cc}
	I_{2}& \mathbf{0} \\
	\mathbf{0} & -I_{2} \\
\end{array}
\right),
\end{aligned}
\ee
where $\sigma_1$, $\sigma_2$,  $\sigma_3$
are the Pauli matrices
\be\label{Paulim}
\sigma_{1}=\left(
\begin{array}{cc}
	0 & 1  \\
	1 & 0  \\
\end{array}
\right), \qquad
\sigma_{2}=\left(
\begin{array}{cc}
	0 & -i \\
	i & 0 \\
\end{array}
\right),\qquad
\sigma_{3}=\left(
\begin{array}{cc}
	1 & 0 \\
	0 & -1 \\
\end{array}
\right).
\ee

In the relativistic  regime $\eps=O(1)$, extensive analytical and numerical studies have been carried out for the Dirac equation
\eqref{eq:dirac4} in the literature. In the analytical aspect, for the existence
and multiplicity of bound states and/or standing wave solutions, we
refer to \cite{Das, DK, ES, GGT, Gross, Ring} and references therein. In the
numerical aspect, many accurate and efficient numerical methods have been proposed and analyzed \cite{AL, MQ}, such as the finite difference time domain (FDTD)
methods \cite{ALSFB, NSG}, time-splitting Fourier pseudospectral (TSFP) method \cite{BCJT2, HJM}, exponential wave integrator Fourier pseudospectral (EWI-FP) method \cite{BCJT2}, and the Gaussian
beam method \cite{WHJY}, etc.

In the nonrelativistic  regime, as $ \eps\to 0^+$ ,
the Dirac equation \eqref{eq:dirac4} converges to Pauli equation \cite{BMP, Hunziker} or Schr\"{o}dinger equation \cite{Bad, BMP}, and
the solution propagates waves with wavelength O($\eps^2$) in time and  O(1) in space, respectively.
The highly oscillatory nature of the solution in time brings severe difficulties
in numerical computation in the nonrelativistic  regime, i.e. when $0 < \eps \ll 1$. In fact, it would cause the time step size $\tau$ to be
strictly dependent on $\eps$ in order to capture the solution accurately.
Rigorous error estimates were established for the finite difference time domain method (FDTD),
exponential wave integrator Fourier pseudospectral method (EWI-FP) and time-splitting Fourier
pseudospectral method (TSFP) in this parameter regime \cite{BCJT2}. The error bounds suggested  $\tau=O(\eps^3)$ for FDTD and $\tau=O(\eps^2)$ for EWI-FP and TSFP.
A new fourth-order compact time-splitting method ($S_\text{4c}$) was recently put forward
to improve the efficiency and accuracy \cite{BY}. Moreover,
a uniformly accurate multiscale time integrator pseudospectral method was proposed
and analyzed for the Dirac equation in the nonrelativistic  regime, where the errors are uniform with respect to $\eps\in(0,1]$ \cite{BCJT}, allowing for $\eps$-independent time step $\tau$.

From the analysis in \cite{BCJT2}, the error bounds for second order Strang splitting TSFP (also called as $S_2$ later in this paper) depends on the small parameter $\eps$
as $\tau^2/\eps^4$. Surprisingly, through our extensive numerical experiments, we find out that if the magnetic potentials
$A_j \equiv 0$ for $j = 1, \ldots, d$ in \eqref{eq:dirac4},  the errors of TSFP are then independent of $\eps$ and uniform w.r.t. $\eps$, i.e., $S_2$ for Dirac equation \eqref{eq:dirac4} without magnetic potentials $A_j$ has super-resolution w.r.t. $\eps$.
In such case, \eqref{eq:dirac4}
reduces to ($d = 1, 2, 3$)
\be\label{eq:dirac4_nomag}
i\partial_t\Psi(t, \bx) =  \left(- \dfrac{i}{\eps}\sum_{j = 1}^{d}\alpha_j\partial_j +
\dfrac{1}{\varepsilon^2}\beta
+ V(t, \mathbf{x})I_4\right)\Psi(t, \bx), \quad \bx\in{\mathbb R}^d, \; t>0,
\ee
with the initial value  given in \eqref{eq:initial}. In lower dimensions ($d=1,2$), the four component Dirac equation \eqref{eq:dirac4_nomag} can be reduced to the following two-component form for
$\Phi(t,\bx)=(\phi_1(t,\bx),\phi_2(t,\bx))^T\in \Bbb C^2$ ($d = 1, 2$) \cite{BCJT2}:
\be\label{eq:dirac2_nomag}
i\partial_t\Phi(t,\bx)=\left(-\frac{i}{\eps}\sum_{j=1}^{d}\sigma_j
\partial_j+\frac{1}{\eps^2}\sigma_3+V(t,\bx)I_2\right)
\Phi(t,\bx), \quad \bx\in{\mathbb R}^d, \; t>0,
\ee
with initial value
\be\label{eq:initial11}
\Phi(t=0,\bx)=\Phi_0(\bx),\qquad \bx\in{\mathbb R}^d.
\ee
The two component form \eqref{eq:dirac2_nomag} is widely used in lower dimensions $d=1,2$ due to its simplicity compared to the four component form \eqref{eq:dirac4_nomag}.

Our extensive numerical studies and theoretical analysis show that for first-order,
second-order, and even higher order time-splitting Fourier pseudospectral methods, there are always
uniform error bounds w.r.t. $\eps\in(0, 1]$. In other words, the splitting methods can capture the solutions accurately even if the time step size $\tau$ is independent of $\eps$, i.e.
they exhibit $\eps$-independent {\bf super-resolution}. As the \textbf{super-resolution} here suggests independence of the oscillation wavelength, it is even stronger than the `super-resolution' in \cite{DT} for the Schr\"{o}dinger equation in the semiclassical regime, where the restriction on the time steps is still related to the wavelength, but not so strict as the resolution of the oscillation by fixed number of points per wavelength.
This super-resolution property of the splitting methods makes them more efficient and reliable for solving the Dirac equation without magnetic potentials in the nonrelativisitc regime,  compared to other numerical approaches in the literature. In the sequel, we will study rigorously the super-resolution phenomenon for first-order ($S_1$) and second-order ($S_2$) time-splitting methods, and  present numerical results to validate the conclusions.

The rest of the paper is organized as follows.
In section 2, we review the first and second order time-splitting methods for
the Dirac equation in the nonrelativistic  regime without magnetic potential, and state the main results. In section 3 and section 4 respectively, detailed proofs for the uniform error bounds and improved uniform error bounds
are  presented. Section 5 is devoted to numerical tests, and finally,  some concluding remarks are
drawn in section 6.
Throughout the paper, we adopt the
standard Sobolev spaces  and the corresponding norms. Meanwhile, $A \lesssim B$ is used with
the meaning that there exists a generic constant $C > 0$ independent of $\eps$ and $\tau$,
such that $|A|\le C\,B$. $A \lesssim_\delta B$ has a similar meaning that there exists a constant $C_\delta>0$ dependent on $\delta$ but independent of $\eps$ and $\tau$, such that $|A|\le C_\delta\,B$.

\section{Time-splitting methods and main results}
In this section, we recall the first and second order time-splitting methods applied to the Dirac equation and state the main results of this paper. For simplicity of presentation, we only carry out the splitting methods and corresponding analysis for \eqref{eq:dirac2_nomag} in 1D ($d = 1$). Generalization to \eqref{eq:dirac4_nomag} and/or higher dimensions is straightforward and results remain valid without modifications (see Appendix).

\subsection{Time-splitting methods}
Denote the Hermitian operator
\be\label{eq:op:def}
\mathcal{T}^\eps=-i\eps\sigma_1\partial_x+\sigma_3, \quad x\in \Bbb R,
\ee
and the Dirac equation \eqref{eq:dirac2_nomag} in 1D can be written as
\be\label{eq:dirac_nomag}
i\p_t\Phi(t,x)=\frac{1}{\eps^2}\mathcal{T}^\eps\Phi(t,x)+V(t,x)\Phi(t,x), \quad x\in\mathbb{R},
\ee
with initial value
\be\label{eq:dirac_initial}
\Phi(0,x)=\Phi_0(x), \quad x\in\mathbb{R}.
\ee

Choose $\tau > 0$ to be the time step size and $t_{n}=n\tau$ for $n = 0, 1, ...$ as the time steps.
Denote $\Phi^n(x)$ as the numerical approximation of $\Phi(t_n,x)$,
where $\Phi(t,x)$ is the exact solution to \eqref{eq:dirac_nomag} with \eqref{eq:dirac_initial}, then the first-order and
second-order time-splitting methods can be expressed as follows.

{\bf First-order splitting (Lie-Trotter splitting)}. The discrete-in-time first-order splitting ($S_1$) is written as \cite{Trotter}
\be\label{eq:Lie-trotter}
\Phi^{n+1}(x)=e^{-\frac{i\tau}{\eps^2}\mathcal{T}^\eps}e^{-i\int_{t_n}^{t_{n+1}}V(s,x)\,ds}\Phi^n(x), \quad x\in\mathbb{R},
\ee
with $\Phi^0(x) = \Phi_0(x)$.

{\bf Second-order splitting (Strang splitting)}. The discrete-in-time second-order splitting ($S_2$) is written as \cite{Strang}
\be\label{eq:Strang}
\Phi^{n+1}(x)=e^{-\frac{i\tau}{2\eps^2}\mathcal{T}^\eps}e^{-i\int_{t_n}^{t_{n+1}}V(s,x)\,ds}e^{-\frac{i\tau}{2\eps^2}\mathcal{T}^\eps}\Phi^n(x), \quad x\in\mathbb{R}.
\ee
with $\Phi^0(x) = \Phi_0(x)$.

Then the main results of this paper can be summarized below.
\subsection{Uniform error bounds}
For any $T > 0$,
we are going to consider smooth enough solutions, i.e. we assume the electric potential satisfies
\begin{equation*}
(A)\hskip 1.4cm V(t,x)\in W^{m,\infty}([0,T]; L^\infty(\Bbb R))\cap
L^\infty([0,T]; W^{2m+m_*, \infty}(\Bbb R)), \hskip 1.4cm \end{equation*}
with $m\in\mathbb{N}^*$, $m_*\in\{0,1\}$. In addition, we assume the exact solution $\Phi(t, x)$ satisfies
\begin{equation*}
(B)\hskip 1.3cm \Phi(t, x)\in L^\infty([0, T], (H^{2m+m_*}(\mathbb{R}))^2), \quad m\in\mathbb{N}^*,\quad m_*\in\{0,1\}. \hskip8cm
\end{equation*}
We remark here that if the initial value $\Phi_0(\bx)\in (H^{2m+m_*}(\mathbb{R}))^2$, then  condition $(B)$ is implied by condition $(A)$.

For the numerical approximation $\Phi^n(x)$ obtained from $S_1$ \eqref{eq:Lie-trotter} or $S_2$ \eqref{eq:Strang}, we introduce the error function
\be\label{eq:en}
{\bf e}^n(x) = \Phi(t_n, x) - \Phi^n(x), \quad 0\leq n\leq \frac{T}{\tau},
\ee
then the following error estimates hold.
\begin{theorem}\label{thm:lie}
	Let $\Phi^n(x)$ be the numerical approximation obtained from $S_1$ \eqref{eq:Lie-trotter},
	then under the assumptions $(A)$ and $(B)$ with $m = 1$ and $m_*=0$,
	we have the following error estimates
	\be
	\|{\bf e}^n(x)\|_{L^2}\lesssim \tau+\eps,\quad \|{\bf e}^n(x)\|_{L^2}\lesssim \tau+\tau/\eps, \quad 0\le n\le\frac{T}{\tau}.
	\ee
	As a result, there is a uniform error bound for $S_1$
	\be
	\|{\bf e}^n(x)\|_{L^2}\lesssim \tau + \max_{0<\eps\leq 1}\min\{\eps, \tau/\eps\} \lesssim \sqrt{\tau}, \quad 0\le n\le\frac{T}{\tau}.
	\ee
\end{theorem}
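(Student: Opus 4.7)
Both steps of $S_1$ are unitary on $(L^2(\mathbb{R}))^2$: $e^{-i\tau\mathcal{T}^\eps/\eps^2}$ because $\mathcal{T}^\eps$ is self-adjoint, and $e^{-i\int V\,ds}$ because $V$ is real and scalar. Hence the scheme is an $L^2$ isometry, and with the one-step local truncation error
\[
\eta^n(x) := \Phi(t_{n+1},x) - e^{-i\tau\mathcal{T}^\eps/\eps^2}\,e^{-i\int_{t_n}^{t_{n+1}}V(s,x)\,ds}\,\Phi(t_n,x),
\]
a standard telescoping gives $\|{\bf e}^n\|_{L^2}\leq \sum_{k=0}^{n-1}\|\eta^k\|_{L^2}$. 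The theorem therefore reduces to two complementary local-error bounds $\|\eta^n\|_{L^2}\lesssim \tau^2+\tau^2/\eps$ and $\|\eta^n\|_{L^2}\lesssim \tau^2+\tau\eps$, which upon summing $T/\tau$ steps produce the stated global estimates.

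The first (``classical'') bound follows from standard splitting analysis: apply Duhamel to the Dirac equation for $\Phi(t_{n+1})$ and expand the two exponentials of $S_1$ to first order in $\tau$; the leading residual is a double integral involving the commutator $[\mathcal{T}^\eps/\eps^2,V]$. The concrete form $\mathcal{T}^\eps=-i\eps\sigma_1\p_x+\sigma_3$ gives $[\mathcal{T}^\eps,V]=-i\eps\sigma_1 V_x$, so $[\mathcal{T}^\eps/\eps^2,V]$ is bounded on $L^2$ by $\|V_x\|_{L^\infty}/\eps$. Combined with assumptions $(A)$--$(B)$ at $m=1$, $m_\ast=0$, which furnish enough regularity of $V$, $V_t$, and $\Phi(t,\cdot)\in H^2$ to control all remainders at order $\tau^2$, one obtains $\|\eta^n\|_{L^2}\lesssim \tau^2/\eps+\tau^2$.

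The second (``super-resolution'') bound is the main obstacle, and is where the absence of a magnetic vector potential is crucial. The idea is that the $1/\eps$ loss in the commutator is artificial: it arises because $e^{-is\mathcal{T}^\eps/\eps^2}$ oscillates at frequency $1/\eps^2$, but these oscillations average out over a step of length $\tau\gg\eps^2$. To exploit this I would diagonalise $\mathcal{T}^\eps$ on the Fourier side: its symbol $\eps\xi\sigma_1+\sigma_3$ has eigenvalues $\pm d_\eps(\xi):=\pm\sqrt{1+\eps^2\xi^2}\geq 1$ with smooth orthogonal eigenprojectors $\Pi_\eps^\pm(\xi)$, so that $e^{-is\mathcal{T}^\eps/\eps^2}=\Pi_\eps^+e^{-isd_\eps/\eps^2}+\Pi_\eps^-e^{isd_\eps/\eps^2}$. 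Inserting this into the local-error integrand decomposes the conjugated perturbation into diagonal pieces, of the form $\Pi_\eps^\pm(\xi)\sigma_1 V_x\Pi_\eps^\pm(\eta)$, which are non-oscillatory but vanish at $\eps=0$ (since $\sigma_1$ anticommutes with $\sigma_3$ and $\Pi_\eps^\pm\to(I\pm\sigma_3)/2$, so the symbol is bounded by $\eps(|\xi|+|\eta|)$, yielding an $O(\eps)$ gain once one derivative is transferred onto $\Phi$); and off-diagonal pieces carrying a fast phase $e^{\pm 2isd_\eps/\eps^2}$, whose $s$-integration after one non-stationary integration by parts produces a boundary term of size $\eps^2/d_\eps\leq\eps^2$. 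After multiplication by the $1/\eps$ prefactor from $[\mathcal{T}^\eps/\eps^2,V]$ both contributions are of size $\eps$, giving $\|\eta^n\|_{L^2}\lesssim \tau^2+\tau\eps$. A magnetic term $A_j\alpha_j$ would couple $\Pi_\eps^+$ and $\Pi_\eps^-$ already at $\eps=0$ and break this cancellation, which explains why the super-resolution statement requires $A_j\equiv 0$.

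Combining, $\|{\bf e}^n\|_{L^2}\lesssim\tau+\min(\eps,\tau/\eps)$, and since $\min(\eps,\tau/\eps)\leq\sqrt{\eps\cdot\tau/\eps}=\sqrt{\tau}$ uniformly in $\eps\in(0,1]$ (with equality at $\eps=\sqrt{\tau}$), the uniform half-order bound stated in the last inequality of Theorem~\ref{thm:lie} follows.
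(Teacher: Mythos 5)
Your proof is correct, and the telescoping/unitarity reduction to the local error is the same as the paper's (Lemma~\ref{lemma:lie}). Where you diverge is in the treatment of the ``hard'' term. You rewrite the dominant local error as the double integral $\int_0^\tau\!\int_0^s e^{iwA}[A,V]e^{-iwA}\Phi\,dw\,ds$ with $A=\mathcal{T}^\eps/\eps^2$ and decompose the conjugated \emph{commutator} $[A,V]=-i\sigma_1 V_x/\eps$ into spectral blocks: the diagonal blocks $\Pi_\pm^\eps\sigma_1 V_x\Pi_\pm^\eps$ vanish at $\eps=0$ because $\sigma_1$ anticommutes with $\sigma_3$ (so $\Pi_\pm^0\sigma_1\Pi_\pm^0=0$), while the off-diagonal blocks carry the fast phase $e^{\pm iw(\sqrt{1+\eps^2\xi^2}+\sqrt{1+\eps^2\eta^2})/\eps^2}$ whose $w$-independent symbol lets you integrate the inner $w$-integral exactly (your ``integration by parts'' in fact has no remainder), gaining a factor $\eps^2$. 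The paper instead decomposes the conjugated \emph{potential} $e^{isA}Ve^{-isA}=f_1^n+f_2^n$: the same-band part $f_1^n$ is slowly varying and absorbed into the $O(\tau^2)$ remainder $\eta_1^n$, while the cross-band part $f_2^n$ is already of pointwise size $O(\eps)$ simply because $\Pi_\pm^\eps V\Pi_\mp^\eps=O(\eps)$ for scalar $V$ ($\Pi_+^0 V\Pi_-^0=V\Pi_+^0\Pi_-^0=0$), so the $\tau\eps$ bound follows from the trivial inequality $\bigl\|\int_0^\tau f_2^n\,ds-\tau f_2^n(0)\bigr\|\le 2\tau\sup_s\|f_2^n(s)\|$ without touching the oscillation at all; the complementary $\tau^2/\eps$ bound comes from the Taylor estimate $\tau^2\sup_s\|\partial_s f_2^n\|$. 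Both routes hinge on $V$ being scalar, but exploit it through different algebraic identities ($\Pi_\pm^0\sigma_1\Pi_\pm^0=0$ for you versus $\Pi_\pm^0 V\Pi_\mp^0=0$ for the paper); the paper's is shorter, yours makes the phase-averaging mechanism explicit. One small imprecision: after the $1/\eps$ prefactor the diagonal contribution is $O(1)$, not $O(\eps)$ --- it is the double time-integral that makes it $O(\tau^2)$, while only the off-diagonal boundary term is $O(\eps)$ and yields $O(\tau\eps)$; the final local bound $\tau^2+\tau\eps$ and the concluding $\min(\eps,\tau/\eps)\le\sqrt\tau$ you state are nonetheless correct.
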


\begin{theorem}\label{thm:strang}
	Let $\Phi^n(x)$ be the
	numerical approximation obtained from $S_2$ \eqref{eq:Strang}, then
	under the assumptions $(A)$ and $(B)$ with $m = 2$ and $m_*=0$,
	we have the following error estimates
	\be
	\|{\bf e}^n(x)\|_{L^2}\lesssim \tau^2+\eps,\quad \|{\bf e}^n(x)\|_{L^2}\lesssim \tau^2+\tau^2/\eps^3, \quad 0\le n\le\frac{T}{\tau}.
	\ee
	As a result, there is a uniform error bound for $S_2$
	\be
	\|{\bf e}^n(x)\|_{L^2}\lesssim \tau^2 + \max_{0<\eps\leq 1}\min\{\eps, \tau^2/\eps^3\}\lesssim \sqrt{\tau},  \quad 0\leq n\leq\frac{T}{\tau}.
	\ee
\end{theorem}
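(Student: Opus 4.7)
The plan is to mirror the strategy used for Theorem 2.1, by establishing two independent error bounds,
\be
\|{\bf e}^n(x)\|_{L^2}\lesssim \tau^2+\eps\qquad\text{and}\qquad \|{\bf e}^n(x)\|_{L^2}\lesssim \tau^2+\tau^2/\eps^3,
\ee
and combining them via the observation $\max_{0<\eps\leq 1}\min\{\eps,\tau^2/\eps^3\}\lesssim \sqrt{\tau}$ (the minimum being attained when $\eps\sim\sqrt{\tau}$). In both cases the global error is extracted from a local truncation error $\mathcal L^n := \Phi(t_{n+1},\cdot) - S_2[\Phi(t_n,\cdot)]$ via a standard Lady Windermere's fan argument, which applies because the two exponential factors composing $S_2$ are $L^2$-unitary, so $S_2$ is automatically $L^2$-stable.

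For the standard bound $\tau^2+\tau^2/\eps^3$, I would BCH/Taylor-expand the Strang exponent in $\tau$. With $A=\mathcal T^\eps/\eps^2$ and $B=V$, the leading $\tau^3$ corrections are the iterated commutators $[A,[A,B]]$ and $[B,[A,B]]$, plus time-ordering remainders from the $t$-dependence of $V$. The critical algebraic identity
\be
[\mathcal T^\eps,V] = -i\eps\,\sigma_1 V_x
\ee
gives $[A,B]=O(\eps^{-1})$ rather than the naive $O(\eps^{-2})$; a further commutation against $\mathcal T^\eps$ yields $[A,[A,B]]=O(\eps^{-3})$; and $[B,[A,B]]\equiv 0$ since multiplication operators commute. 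This yields $\|\mathcal L^n\|_{L^2}\lesssim\tau^3/\eps^3$ under assumptions $(A)$--$(B)$ with $m=2$ (which provides the four spatial derivatives of $V$ and $\Phi$ appearing in the iterated commutators), summing to $\tau^2/\eps^3$.

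For the super-resolution bound $\tau^2+\eps$, the commutator route is hopeless since every commutator with $A$ costs another $1/\eps^2$. Instead, I would exploit the oscillatory structure of $e^{-it\mathcal T^\eps/\eps^2}$ explicitly by passing to the interaction picture $\Phi(t,x)=e^{-it\mathcal T^\eps/\eps^2}\Psi(t,x)$, in which $\Psi$ satisfies $i\p_t\Psi = W_\eps(t)\Psi$ with conjugated potential $W_\eps(t)=e^{it\mathcal T^\eps/\eps^2}V(t,\cdot)e^{-it\mathcal T^\eps/\eps^2}$. Decomposing in the eigenbasis of $\mathcal T^\eps$ (whose Fourier symbol $\eps\xi\sigma_1+\sigma_3$ has eigenvalues $\pm\sqrt{1+\eps^2\xi^2}$) splits $W_\eps$ into a slowly varying diagonal part and off-diagonal pieces carrying rapid phases $e^{\pm 2it\sqrt{1+\eps^2\xi^2}/\eps^2}$. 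Expanding the local error of $S_2$ in $\tau$ to order $\tau^2$ in this picture produces integrals of these rapid phases against $V$ and its derivatives; integration by parts in $t$ then trades oscillation for factors of $\eps^2$ at the cost of one time derivative of $V$, yielding $\|\mathcal L^n\|_{L^2}\lesssim \tau^3+\tau\eps$ and hence the global bound $O(\tau^2+\eps)$, with $m=2$ just sufficient to control the two time derivatives and four spatial derivatives involved.

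The principal obstacle is the super-resolution bound. The commutator expansion that gives the standard bound blows up uniformly as $\eps\to 0$, so one must genuinely use the oscillations produced by $e^{it\mathcal T^\eps/\eps^2}\sigma_3 e^{-it\mathcal T^\eps/\eps^2}$ rather than crude estimates on $\mathcal T^\eps/\eps^2$. Managing the non-commutativity of $\sigma_1$ and $\sigma_3$ in the conjugated potential, the $t$-dependence of $V$, and the extra order in $\tau$ needed for $S_2$ (compared to $S_1$, which only reached $\tau+\eps$) is the delicate technical core of the argument; a secondary bookkeeping nuisance is ensuring all boundary terms produced by integration by parts combine into the clean $\tau^2+\eps$ form rather than contaminating it with resonant factors.
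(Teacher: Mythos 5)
Your second bound (the one that degrades as $\eps\to0$) is handled by a genuinely different route from the paper's. The paper works entirely through the spectral decomposition $e^{it\mathcal{T}^\eps/\eps^2}=e^{it/\eps^2}e^{it\mathcal{D}^\eps}\Pi_+^\eps+e^{-it/\eps^2}e^{-it\mathcal{D}^\eps}\Pi_-^\eps$, splits the local defect into non-oscillatory pieces and oscillatory pieces $f_2^n$, $g_j^n$, and bounds the midpoint-quadrature error of the latter by differentiating in $s$, which costs $1/\eps^2$ per derivative. Your commutator route is correct in spirit: $[A,B]=-i\sigma_1 V_x/\eps=O(\eps^{-1})$, $[B,[A,B]]=0$ (a scalar commutes with a matrix-valued multiplication operator), and $[A,[A,B]]=O(\eps^{-3})$ do give the $\tau^3/\eps^3$ local defect. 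But since $[A,[A,[A,[A,B]]]]=O(\eps^{-7})$ and so on, the formal BCH series does not converge uniformly in $\eps$; you must instead invoke the exact integral representation of the Strang defect (as in Jahnke--Lubich), which controls the whole thing by the double commutators alone. As stated, your argument is a sketch of a valid alternative.

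The super-resolution bound $\tau^2+\eps$ has a genuine gap. Integration by parts against the phase $e^{\pm 2is/\eps^2}$ on the interval $[0,\tau]$ gains only a fixed factor $\eps^2$, not $\tau\eps$: you get $\|\mathcal{L}^n\|\lesssim \tau^3+\eps^2$, which sums over $T/\tau$ steps to $\tau^2+\eps^2/\tau$, and $\max_\eps\min\{\eps^2/\tau,\tau^2/\eps^3\}\sim\tau^{1/5}$, strictly worse than $\sqrt{\tau}$. The mechanism you need is \emph{amplitude} smallness of the off-diagonal blocks, not phase cancellation over a single step: because $V$ is scalar and the limiting projections $\Pi_\pm^0=\mathrm{diag}(1,0),\ \mathrm{diag}(0,1)$ are complementary, one has $\Pi_+^0 V\Pi_-^0=V\Pi_+^0\Pi_-^0=0$, so the expansion $\Pi_\pm^\eps=\Pi_\pm^0+\eps\mathcal{R}_1+O(\eps^2)$ forces $\Pi_\pm^\eps V\Pi_\mp^\eps=O(\eps)$ in operator norm $(H^1)^2\to(L^2)^2$. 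Hence $\|f_2^n(s)\|_{L^2}\lesssim\eps$ and $\|g_j^n(s,w)\|_{L^2}\lesssim\eps$ uniformly in $s,w$, and the trivial bound $\left\|\int_0^\tau h(s)\,ds-\tau h(\tau/2)\right\|\leq 2\tau\|h\|_{L^\infty}$ already gives the local error $\tau\eps$ and the global $\eps$. You identify the right decomposition (diagonal versus off-diagonal conjugated potential) but attribute the gain to the wrong source; this amplitude smallness is also precisely what breaks down when magnetic potentials $\sum_j A_j\alpha_j$ are present, which is why the theorem requires $A_j\equiv0$.
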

\begin{rmk}\label{remark:err}
	The error bounds in Theorem \ref{thm:lie} can be expressed as
	\be
	\|{\bf e}^n(x)\|_{L^2}\leq (C_1 + C_2T)\max_{t\in[0, T]}\|\Phi(t, x)\|_{H^2}\left(\tau + \max_{0<\eps\leq 1}\min\{\eps, \tau/\eps\}\right),
	\ee
	and the  error estimates in Theorem \ref{thm:strang} can be restated as
	\be
	\|{\bf e}^n(x)\|_{L^2}\leq (C_3 + C_4T)\max_{t\in[0, T]}\|\Phi(t, x)\|_{H^4}\left(\tau^2 + \max_{0<\eps\leq 1}\min\{\eps, \tau^2/\eps^3\}\right),
	\ee
	for $0\leq n\leq\frac{T}{\tau}$, where $C_1$, $C_2$, $C_3$ and $C_4$ are constants depending only on $V(t,x)$. We notice here that the error constants are linear in $T$.
\end{rmk}

We note that higher order time-splitting methods also share the super-resolution property, but for simplicity, we only focus on $S_1$ and $S_2$ here. Remark \ref{remark:err} could be easily derived by examining the proofs of Theorems \ref{thm:lie} \& \ref{thm:strang}, and the details will be skipped.

\subsection{Improved uniform error bounds for non-resonant time steps}
In the Dirac equation \eqref{eq:dirac2_nomag} or \eqref{eq:dirac4_nomag}, the leading term is $\frac{1}{\eps^2}\sigma_3\Phi$ or $\frac{1}{\eps^2}\beta\Psi$, which suggests the solution exhibits almost periodicity in time  with periods $2k\pi\eps^2$ ($k\in\mathbb{N}^*$, the periods of $e^{-i\sigma_3/\eps^2}$ and $e^{-i\beta/\eps^2}$).
From numerical results, we observe the errors behave much better compared to the results in Theorems \ref{thm:lie}\& \ref{thm:strang}, when $2\tau$ is away from  the leading temporal oscillation periods $2k\pi\eps^2$. In fact, for given $0<\delta\leq1$, define
\be \label{tau_range}
\mathcal{A}_\delta(\eps) := \bigcup_{k = 0}^\infty\left[\eps^2k\pi + \eps^2\arcsin\delta, \eps^2(k+1)\pi - \eps^2\arcsin\delta\right], \quad 0 < \eps \leq 1,
\ee
and the errors of $S_1$ and $S_2$ can be improved compared to the previous subsection when $\tau\in \mathcal{A}_\delta(\eps)$.
To illustrate $\mathcal{A}_\delta(\eps)$, we show in Figure \ref{fig:axis} for $\eps = 1$ and $\eps = 0.5$ with fixed $\delta = 0.15$.
\begin{figure}
	\caption{Illustration of non-resonant time steps $\mathcal{A}_\delta(\eps)$ with $\delta = 0.15$ for
		(a) $\eps = 1$ and (b) $\eps = 0.5$.}
	\vspace{5pt}
	\includegraphics[width=1\textwidth]{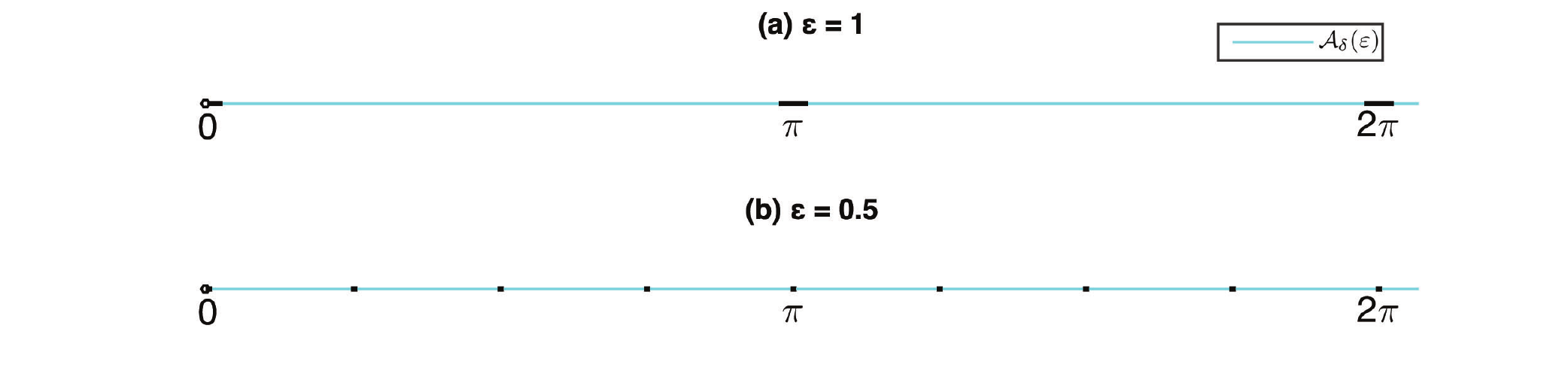}
	\label{fig:axis}
\end{figure}

For $\tau\in\mathcal{A}_\delta(\eps)$, we can derive improved uniform error bounds for the two splitting methods as shown in the following two theorems.
\begin{theorem}\label{thm:lie2}
	Let $\Phi^n(x)$ be the
	numerical approximation obtained from $S_1$ \eqref{eq:Lie-trotter}.
	If the time step size $\tau$ is non-resonant, i.e. there exists $0 < \delta \leq 1$, such that $\tau\in \mathcal{A}_\delta(\eps)$,  under the assumptions $(A)$  and $(B)$ with $m =1$ and $m_*=1$,
	we have an improved uniform error bound
	\be
	\|{\bf e}^n(x)\|_{L^2}\lesssim_\delta \tau, \quad 0\le n\le\frac{T}{\tau}.
	\ee
\end{theorem}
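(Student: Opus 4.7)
The plan is to refine the local truncation error of $S_1$ by spectrally decomposing with respect to $\mathcal{T}^\epsilon$, isolating its slow and fast oscillatory parts, and then summing the fast contribution across steps via Abel summation under the non-resonance hypothesis.

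I would first write the local error
\[
L_n=\Phi(t_{n+1})-e^{-i\tau\mathcal{T}^\epsilon/\epsilon^2}e^{-i\int_{t_n}^{t_{n+1}}V(s,\cdot)ds}\Phi(t_n)
\]
and expand via iterated Duhamel. The key commutator identity $[\mathcal{T}^\epsilon,V(t)I_2]=-i\epsilon\sigma_1 V_x(t,\cdot)$ (the $\sigma_3$ part of $\mathcal{T}^\epsilon$ commutes with the scalar potential) gives the leading-order formula
\[
L_n=-\frac{i}{\epsilon}\,e^{-i\tau\mathcal{T}^\epsilon/\epsilon^2}\!\int_0^\tau\!\!\int_0^s e^{ir\mathcal{T}^\epsilon/\epsilon^2}\sigma_1 V_x(t_n+s,x)e^{-ir\mathcal{T}^\epsilon/\epsilon^2}\,dr\,ds\,\Phi(t_n)+R_n,
\]
where $\|R_n\|_{L^2}\lesssim\tau^2$ (no $1/\epsilon$ singularity) sums trivially to an $O(\tau)$ global contribution. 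The singular $1/\epsilon$ prefactor in the explicit term is what produces the weak bound $\tau/\epsilon$ of Theorem~\ref{thm:lie}; the improvement must come from the oscillatory structure of the conjugated operator $e^{ir\mathcal{T}^\epsilon/\epsilon^2}\sigma_1 V_x e^{-ir\mathcal{T}^\epsilon/\epsilon^2}$.

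Passing to Fourier, I would use the spectral decomposition $e^{ir\mathcal{T}^\epsilon/\epsilon^2}=e^{ir\mu(\xi)/\epsilon^2}P_+(\xi)+e^{-ir\mu(\xi)/\epsilon^2}P_-(\xi)$ with $\mu(\xi)=\sqrt{1+\epsilon^2\xi^2}$ and spectral projectors $P_\pm(\xi)=\tfrac12\bigl(I\pm(\sigma_3+\epsilon\xi\sigma_1)/\mu(\xi)\bigr)$. The anticommutation $\sigma_1\sigma_3+\sigma_3\sigma_1=0$ yields the crucial algebraic identity $P_\alpha(\xi)\sigma_1 P_\alpha(\eta)=O(\epsilon(|\xi|+|\eta|))$ on the diagonal ($\alpha\in\{+,-\}$), while the off-diagonal blocks $P_\alpha(\xi)\sigma_1 P_{-\alpha}(\eta)$ remain of order one but carry the fast phase $e^{\pm ir(\mu(\xi)+\mu(\eta))/\epsilon^2}$. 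Splitting $L_n=L_n^{(s)}+L_n^{(f)}$ accordingly, the diagonal (slow) part inherits an extra factor $\epsilon$ from the projector structure that exactly cancels the $1/\epsilon$ prefactor, giving $\|L_n^{(s)}\|_{L^2}\lesssim\tau^2\|\Phi(t_n)\|_{H^2}$, so summation in $n$ yields an $O(\tau)$ contribution to $\|\mathbf{e}^n\|_{L^2}$ uniform in $\epsilon$.

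For the fast part, the stability identity $\mathbf{e}^N=\sum_{n=0}^{N-1}(S_1)^{N-1-n}L_n+\text{(l.o.t.)}$ combines the phases in $L_n^{(f)}$ with those of the numerical propagator and, via the change of variables $\Phi(t_n)=e^{-it_n\mathcal{T}^\epsilon/\epsilon^2}\widetilde\Phi(t_n)$ with $\widetilde\Phi$ slowly varying in $t$, with the temporal phases of the data. After matching signs of the spectral projectors, the surviving cross terms reduce to sums of the form $\sum_{n=0}^{N-1}e^{in\tau(\mu(\xi)+\mu(\eta))/\epsilon^2}\alpha_n(\xi,\eta)$, where $\alpha_n$ inherits only the slow $t_n$-dependence of $V$ and $\widetilde\Phi$. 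Abel summation bounds such sums by $C(\max_n|\alpha_n|+T\max_n\tau^{-1}|\alpha_{n+1}-\alpha_n|)/|\sin(\tau(\mu(\xi)+\mu(\eta))/(2\epsilon^2))|$, and the non-resonance assumption $|\sin(\tau/\epsilon^2)|\ge\delta$ combined with the Lipschitz perturbation $\tau(\mu+\mu'-2)/(2\epsilon^2)=O(\tau(\xi^2+\eta^2))$ yields $|\sin(\tau(\mu+\mu')/(2\epsilon^2))|\gtrsim\delta$ on the low-frequency cutoff $|\xi|,|\eta|\le K$ with $\tau K^2\lesssim\delta$. The high-frequency tail is absorbed by the extra $H^3$ regularity (the additional derivative encoded by $m_*=1$). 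Summing gives a fast contribution $\lesssim_\delta\tau^2/(\epsilon\delta)\lesssim_\delta\tau$ in the regime $\epsilon\gtrsim\tau$, while the regime $\epsilon\lesssim\tau$ is already covered by Theorem~\ref{thm:lie}'s bound $\tau+\epsilon\lesssim\tau$. The principal obstacle in this plan is the uniform control of the denominator across all Fourier frequencies of the data: the $(\xi,\eta)$-dependence of $\mu(\xi)+\mu(\eta)$ forces the low/high frequency splitting and consumes the extra derivative provided by $m_*=1$.
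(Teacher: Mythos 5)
Your proposal is a genuinely different route from the paper's, though it rests on the same two ideas: the $O(\eps)$-smallness of the fast/slow cross-terms and an Abel-type geometric sum controlled by non-resonance. The paper (Section~\ref{sec:nonres}, Steps 1--3) works at the operator level: it Taylor-expands $e^{is\mathcal{D}^\eps}\to Id$ inside the local error (with $\mathcal{D}^\eps=\eps^{-2}(\sqrt{Id-\eps^2\Delta}-Id)$ uniformly bounded), eating only $O(\tau^2)$ per step and thus $O(\tau)$ globally, and it invokes the nonrelativistic-limit structure of the exact flow from \cite{BCJT}, $S_e(t_n;t_k)=e^{-i(t_n-t_k)/\eps^2}S_e^+ +e^{i(t_n-t_k)/\eps^2}S_e^- +O(\eps^2)$, where $S_e^\pm$ carry the $\mathcal{D}^\eps$-evolution but not the $\eps^{-2}$ oscillation. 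Consequently the geometric-sum phases are exactly $e^{\pm i\,2l\tau/\eps^2}$, frequency-independent, and the hypothesis $|\sin(\tau/\eps^2)|\geq\delta$ bounds $|\theta_k|\leq 1/\delta$ for all of $L^2$ in one stroke, with no frequency cutoff and no $\eps$-versus-$\tau$ regime split. Your approach keeps the full dispersion $\mu(\xi)=\sqrt{1+\eps^2\xi^2}$ in the accumulated phase $e^{in\tau(\mu(\xi)+\mu(\eta))/\eps^2}$, and you correctly identify the consequence: the denominator $\sin\bigl(\tau(\mu(\xi)+\mu(\eta))/(2\eps^2)\bigr)$ can vanish at moderate $\xi$, so the Abel bound cannot be applied uniformly in frequency, forcing the cutoff $K\sim\sqrt{\delta/\tau}$ plus a regime split falling back on Theorem~\ref{thm:lie}'s bound $\tau+\eps$. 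Your algebra is right (both the commutator $[\mathcal{T}^\eps,V]=-i\eps\sigma_1V_x$ and the diagonal cancellation $P_\alpha(\xi)\sigma_1P_\alpha(\eta)=\tfrac14\eps(\xi+\eta)(I\pm\sigma_3)+O(\eps^2)$, which is the Fourier-side counterpart of the paper's $\Pi_\pm^\eps V\Pi_\mp^\eps=O(\eps)$). Be aware, however, that the closure of the high-frequency tail absorption against the accumulated $1/\eps$ amplification is not automatic and is the most exposed step in your plan: with $K\sim\sqrt{\delta/\tau}$ and the one extra derivative from $m_*=1$, the tail contribution lands as a power of $\tau/\eps$ times a power of $\tau/\delta$, and one must check that the complementary regime (where this fails) sits inside $\eps\lesssim_\delta\tau$, where Theorem~\ref{thm:lie} already yields $\lesssim_\delta\tau$. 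That bookkeeping does appear to close, but it is exactly the complication the paper avoids by Taylor-expanding $e^{is\mathcal{D}^\eps}$ and leaning on the known decomposition of $S_e$: the paper trades self-containedness (you derive the cancellation purely from spectral projectors) for a frequency-independent phase and a one-line Abel estimate.
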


\begin{theorem}\label{thm:strang2}
	Let $\Phi^n(x)$ be the
	numerical approximation obtained from $S_2$ \eqref{eq:Strang}.
	If the time step size $\tau$ is non-resonant, i.e. there exists $0 < \delta \leq 1$, such that $\tau\in \mathcal{A}_\delta(\eps)$,  under the assumptions $(A)$ and $(B)$ with $m = 2$ and  $m_*=1$,
	we assume an extra regularity	$V(t,x)\in W^{1,\infty}([0,T];H^3(\Bbb R))$ and then the following two error estimates hold
	\be
	\|{\bf e}^n(x)\|_{L^2}\lesssim_\delta \tau^2+\tau\eps,\quad \|{\bf e}^n(x)\|_{L^2}\lesssim_\delta \tau^2+\tau^2/\eps, \quad 0\le n\le\frac{T}{\tau}.
	\ee
	As a result, there is an improved uniform error bound for $S_2$
	\be
	\|{\bf e}^n(x)\|_{L^2}\lesssim_\delta \tau^2 + \max_{0<\eps\leq 1}\min\{\tau\eps, \tau^2/\eps\}\lesssim_\delta \tau^{3/2}, \quad 0\leq n\leq\frac{T}{\tau}.
	\ee
\end{theorem}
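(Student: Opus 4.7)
My plan is to adapt the proof of Theorem \ref{thm:lie2} to the Strang splitting, with extra algebraic bookkeeping that exploits the time-symmetry of $S_2$. The central idea is that when one Taylor-expands $e^{-i\tau\mathcal{T}^\eps/(2\eps^2)}$, every ``dangerous'' $1/\eps$ factor that is generated is naturally paired with a fast oscillatory phase coming from the two spectral branches of $\mathcal{T}^\eps$, and the non-resonance condition $\tau\in\mathcal{A}_\delta(\eps)$ then turns this oscillation into a summable cancellation through a discrete summation by parts in the telescoped global error. In Fourier space, $\mathcal{T}^\eps(k)=\eps k\sigma_1+\sigma_3$ has eigenvalues $\pm\lambda_\eps(k)=\pm\sqrt{1+\eps^2 k^2}$ with smooth eigenprojections $\Pi_\pm^\eps(k)$, so writing $\Phi=\Pi_+^\eps\Phi+\Pi_-^\eps\Phi$, the diagonal pieces of $e^{-i\tau\mathcal{T}^\eps/\eps^2}$ evolve at the slow rate $O(\tau)$, whereas cross pieces generated by inserting $V$ between two propagators carry the fast factor $e^{\mp 2i\tau\lambda_\eps/\eps^2}$. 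The non-resonance assumption gives the uniform estimate $|1-e^{-2i\tau\lambda_\eps/\eps^2}|\gtrsim_\delta 1$ on the relevant frequencies, so $(I-e^{-2i\tau\mathcal{T}^\eps/\eps^2})$ is boundedly invertible on the cross-diagonal subspace with constant depending only on $\delta$, up to derivative losses that are absorbed by the assumed $H^{5}$ regularity of $\Phi$.

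I would then derive a representation of the local truncation error $\eta^n:=\Phi(t_{n+1},x)-e^{-i\tau\mathcal{T}^\eps/(2\eps^2)}e^{-iB_n(x)}e^{-i\tau\mathcal{T}^\eps/(2\eps^2)}\Phi(t_n,x)$, where $B_n(x):=\int_{t_n}^{t_{n+1}}V(s,x)\,ds$, by Taylor-expanding both the exact flow (via iterated Duhamel to third order) and the split flow. Symmetry of $S_2$ annihilates the $\tau^2$ coefficient, and after regrouping I expect a decomposition of the form
\[
\eta^n=\tau^3 R_1^n+\tau^2\,(e^{-2i\tau\mathcal{T}^\eps/\eps^2}-I)\,R_2^n+\tau^3 R_3^n,
\]
where $R_1^n,R_3^n$ are bounded in $L^2$ uniformly in $\eps$ in terms of $\|\Phi\|_{H^4}$ and seminorms of $V$, and $R_2^n$ collects the terms still carrying a stray $1/\eps$, bounded in $L^2$ uniformly in $\eps$ in terms of $\|\Phi\|_{H^3}$ and $\|V\|_{W^{1,\infty}([0,T];H^3)}$. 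Propagating via ${\bf e}^{n+1}=\mathcal{S}_\tau{\bf e}^n+\eta^n$, where $\mathcal{S}_\tau$ denotes the $L^2$-isometric Strang operator, and performing an Abel summation by parts on the middle term using $(I-e^{-2i\tau\mathcal{T}^\eps/\eps^2})^{-1}$ on the cross-diagonal block, the formally $O(\tau^2/\eps)$ oscillatory contribution becomes a telescoping boundary term plus a discrete-derivative sum. Estimating these two ways yields $\|{\bf e}^n\|_{L^2}\lesssim_\delta\tau^2+\tau^2/\eps$ (direct summation) and $\|{\bf e}^n\|_{L^2}\lesssim_\delta\tau^2+\tau\eps$ (by trading one factor of $\tau/\eps^2$ against a commutator with $\mathcal{T}^\eps/\eps^2$, which costs one time-derivative of $V$ and two additional spatial derivatives of $\Phi$). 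Combining the two estimates and using $\max_{0<\eps\le 1}\min(\tau\eps,\tau^2/\eps)\le\tau^{3/2}$ gives the final uniform bound.

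The main obstacle I anticipate is the algebraic step of producing the clean decomposition of $\eta^n$ above. Although the guiding principle---symmetry of $S_2$ kills the $\tau^2$ term, and every leftover $1/\eps^2$ factor is paired with $(e^{-2i\tau\mathcal{T}^\eps/\eps^2}-I)$---is conceptually clean, carrying it out in practice requires expanding the triple product $e^{-i\tau\mathcal{T}^\eps/(2\eps^2)}e^{-iB_n}e^{-i\tau\mathcal{T}^\eps/(2\eps^2)}$ to third order in $\tau$, matching against the three-fold iterated Duhamel representation of the exact flow, and recognising nested commutators such as $[\mathcal{T}^\eps,[\mathcal{T}^\eps,V]]/\eps^4$ and $[\mathcal{T}^\eps,V\,\p_t V]/\eps^2$ as the leading Taylor coefficients of $(e^{-2i\tau\mathcal{T}^\eps/\eps^2}-I)$ acting on simpler operators. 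This is precisely where the extra regularity assumptions $m=2$, $m_*=1$, and $V\in W^{1,\infty}([0,T];H^3(\mathbb R))$ are consumed.
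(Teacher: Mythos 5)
Your overall plan---expand the local truncation error, isolate the fast oscillatory phase, then use the non-resonance condition to perform an Abel/summation-by-parts cancellation---is indeed the skeleton of the paper's argument. But there is a genuine flaw in the technical heart of your proposal that would stop the proof.

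You claim that $\tau\in\mathcal{A}_\delta(\eps)$ makes $(I-e^{-2i\tau\mathcal{T}^\eps/\eps^2})$ boundedly invertible on the cross-diagonal subspace with constant $\lesssim_\delta 1$. This is false. On the Fourier mode $k$ the eigenvalues of $\mathcal{T}^\eps$ are $\pm\lambda_\eps(k)=\pm\sqrt{1+\eps^2 k^2}$, so the relevant multiplier is $1-e^{\mp 2i\tau\lambda_\eps(k)/\eps^2}$, and as $k$ ranges over $\mathbb{R}$ the phase $2\tau\lambda_\eps(k)/\eps^2$ sweeps through all real values; no choice of $\tau$ avoids resonances for all $k$. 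The set $\mathcal{A}_\delta(\eps)$ is designed to control only the \emph{scalar} frequency $1/\eps^2$, yielding $|1-e^{\pm 2i\tau/\eps^2}|\geq 2\delta$. To make use of this one must first factor the unitary group via \eqref{eq:dec:ev} as $e^{it\mathcal{T}^\eps/\eps^2}=e^{it/\eps^2}e^{it\mathcal{D}^\eps}\Pi_+^\eps+e^{-it/\eps^2}e^{-it\mathcal{D}^\eps}\Pi_-^\eps$, where $\mathcal{D}^\eps$ is uniformly bounded (with two derivatives of loss). The Abel summation in the paper's Step 2 is performed against the scalar partial sums $\theta_k=\sum_l e^{-i(t_{n+1}-2t_l-\tau)/\eps^2}$, which are bounded by $1/\delta$; the operator $e^{\pm it\mathcal{D}^\eps}$ and the electric potential $V$ are absorbed into the ``slowly varying'' coefficient of the Abel sum. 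Your inversion of the full operator $(I-e^{-2i\tau\mathcal{T}^\eps/\eps^2})$ cannot be made uniform.

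Two further ingredients you omit. First, the $\tau\eps$ estimate in the theorem does not come from trading commutators; it comes from the algebraic smallness $\Pi_\pm^\eps V\Pi_\mp^\eps=O(\eps)$ in $(H^{m})^2\to(H^{m-1})^2$ (a direct consequence of \eqref{eq:pi+}--\eqref{eq:pi-} and $\Pi_\pm^0 V\Pi_\mp^0=0$), which is the smallness that every cross term carries. Second, you propagate the error with the numerical operator $\mathcal{S}_\tau$. To exploit the $S_e=e^{-it/\eps^2}S_e^++e^{it/\eps^2}S_e^-+R$ structure from \cite{BCJT}, which is what separates the scalar fast phase from the slow part through the global flow, one must first replace the discrete propagator by the exact flow $S_e(t_{n+1};t_{k+1})$; the paper does this by a Lipschitz estimate $\|(S_{k,\tau}-S_e(t_{k+1};t_k))\tilde\Phi\|_{L^2}\lesssim\tau\|\tilde\Phi\|_{L^2}$ followed by a Gronwall argument. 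Your decomposition $\eta^n=\tau^3 R_1^n+\tau^2(e^{-2i\tau\mathcal{T}^\eps/\eps^2}-I)R_2^n+\tau^3 R_3^n$, even if it could be manufactured, would therefore still not be directly usable without these two supporting steps.
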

\begin{rmk} In Theorems \ref{thm:lie2} and \ref{thm:strang2}, the constants in the error estimates depend on $\delta$ and the proof in the paper suggests that the constants are bounded from above by $\frac{T}{\tau}C$ and $\frac{2}{\delta}C$ with some common factor $C$ independent of $\delta$ and $\tau$. The optimality of the uniform error bounds in Theorems \ref{thm:lie2} and \ref{thm:strang2} will be verified by numerical examples presented in  section 5.
\end{rmk}

\begin{rmk} The results in Theorems \ref{thm:lie}, \ref{thm:strang}, \ref{thm:lie2}\&\ref{thm:strang2} can be generalized to higher dimensions ($d=2,3$) for Dirac equation \eqref {eq:dirac4_nomag}/\eqref{eq:dirac2_nomag} by the similar arguments. We will sketch a proof in Appendix.
\end{rmk}

\section{Proof of Theorems \ref{thm:lie} and \ref{thm:strang}}\label{sec:general}
In this section, we prove the uniform error bounds for the splitting methods $S_1$ and $S_2$.
As $\mathcal{T}^\eps$ is diagonalizable in the phase space (Fourier domain), it can be decomposed as \cite{BCJT,BCJT2,BMP}
\be\label{eq:dec}
\mathcal{T}^\eps=\sqrt{Id-\eps^2\Delta}\;\Pi_+^\eps-\sqrt{Id-\eps^2\Delta}\;\Pi_-^\eps,
\ee
where $\Delta=\p_{xx}$ is the Laplace operator in 1D and $Id$ is the  identity operator.
$\Pi_+^\eps$ and $\Pi_-^\eps$ are projectors defined as
\be\label{eq:pipm}
\Pi_+^\eps=\frac12\left[I_2+\left(Id-\eps^2\Delta\right)^{-1/2}\mathcal{T}^\eps\right],\quad \Pi_-^\eps=\frac12\left[I_2-\left(Id-\eps^2\Delta\right)^{-1/2}\mathcal{T}^\eps\right].
\ee
It is straightforward to see that $\Pi_+^\eps+\Pi_-^\eps=I_2$, and
$\Pi_+^\eps\Pi_-^\eps=\Pi_-^\eps\Pi_+^\eps={\bf 0}$, $(\Pi_{\pm}^\eps)^2=\Pi_{\pm}^\eps$. Furthermore, through Taylor expansion, we have \cite{BMP}
\begin{align}\label{eq:pi+}
\Pi_+^\eps=\Pi_+^0+\eps\mathcal{R}_1=\Pi_+^0-i\frac{\eps}{2}\sigma_1\partial_x+\eps^2\mathcal{R}_2,\quad \Pi_+^0=\text{diag}(1,0),\\
\label{eq:pi-}
\Pi_-^\eps=\Pi_-^0-\eps\mathcal{R}_1=\Pi_-^0+i\frac{\eps}{2}\sigma_1\partial_x-\eps^2\mathcal{R}_2,\quad \Pi_-^0=\text{diag}(0,1),
\end{align}
where $\mathcal{R}_1:(H^m(\Bbb R))^2\to (H^{m-1}(\Bbb R))^2$ for $m\geq 1$, $m \in \mathbb{N}^*$, and $\mathcal{R}_2:(H^m(\Bbb R))^2\to (H^{m-2}(\Bbb R))^2$ for $m\geq 2$, $m \in \mathbb{N}^*$ are uniformly bounded operators with respect to $\eps$.

To help capture the features of solutions, denote
\be\label{eq:ddef}
\mathcal{D}^\eps=\frac{1}{\eps^2}(\sqrt{Id-\eps^2\Delta}-Id)
=-(\sqrt{Id-\eps^2\Delta}+Id)^{-1}\Delta,\ee where $\mathcal{D}^\eps$ is a uniformly bounded operator
with respect to $\eps$ from $(H^{m}(\Bbb R))^2$ to $(H^{m-2}(\Bbb R))^2$ for $m\geq 2$,
then we have the decomposition for the unitary evolution  operator $e^{it\mathcal{T}^\eps/\eps^2}$ as \cite{BCJT,CaiW2}
\be\label{eq:dec:ev}
e^{\frac{it}{\eps^2}\mathcal{T}^\eps}  =
e^{\frac{it}{\eps^2}(\sqrt{Id-\eps^2\Delta}\;\Pi_+^\eps-\sqrt{Id-\eps^2\Delta}\;\Pi_-^\eps)}
= e^{it/\eps^2}e^{it\mathcal{D}^\eps}\;\Pi_+^\eps + e^{-it/\eps^2}e^{-it\mathcal{D}^\eps}\;\Pi_-^\eps.
\ee

For the ease of the proof, we first introduce the following two lemmas for the Lie-Trotter splitting $S_1$ \eqref{eq:Lie-trotter} and the Strang splitting $S_2$ \eqref{eq:Strang}, respectively. For simplicity, we denote $V(t) := V(t, x)$, and $\Phi(t) := \Phi(t, x)$ in short.\\

\begin{lemma}\label{lemma:lie}
	Let $\Phi^n(x)$ be the
	numerical approximation obtained from the Lie-Trotter splitting $S_1$ \eqref{eq:Lie-trotter},
	then under the assumptions $(A)$ and $(B)$ with $m = 1$ and $m_* = 0$, we have
	\be
	{\bf e}^{n+1}(x) = e^{-\frac{i\tau}{\eps^2}\mathcal{T}^\eps}e^{-i\int_{t_n}^{t_{n+1}}V(s, x)ds}{\bf e}^n(x) + \eta_1^n(x) + \eta_2^n(x), \quad 0\leq n\leq \frac{T}{\tau} - 1,
	\ee
	with $\|\eta_1^n(x)\|_{L^2}\lesssim\tau^2$, $\eta_2^n(x) = -ie^{-\frac{i\tau}{\eps^2}\mathcal{T}^\eps}\left(\int_0^\tau f_2^n(s)ds - \tau f_2^n(0)\right)$, where
	\begin{align}\label{eq:f2nlie}
	f_2^n(s) = &e^{i2s/\eps^2}e^{is\mathcal{D}^\eps}\Pi_+^\eps\left(V(t_n)\Pi_-^\eps e^{is\mathcal{D}^\eps}\Phi(t_n)\right) \nonumber\\
	&+ e^{-i2s/\eps^2}e^{-is\mathcal{D}^\eps}\Pi_-^\eps\left(V(t_n)\Pi_+^\eps e^{-is\mathcal{D}^\eps}\Phi(t_n)\right).
	\end{align}
\end{lemma}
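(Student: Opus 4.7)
The plan is to derive the stated decomposition of the local truncation error $\mathcal{E}^n := \Phi(t_{n+1})-\widetilde{\Phi}^{n+1}$, where $\widetilde{\Phi}^{n+1} := e^{-i\tau\mathcal{T}^\eps/\eps^2}e^{-i\int_{t_n}^{t_{n+1}}V(s)\,ds}\Phi(t_n)$ is the scheme applied to the exact data at $t_n$. Since the scheme is a composition of unitary operators and $\Phi^n=\Phi(t_n)-\mathbf{e}^n$, linearity immediately produces the claimed recursion as soon as one exhibits $\mathcal{E}^n=\eta_1^n+\eta_2^n$ with the stated structure; this is the only substantive content of the lemma.

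I would first write $\Phi(t_{n+1})$ via Duhamel, peeling off $e^{-i\tau\mathcal{T}^\eps/\eps^2}\Phi(t_n)$ and substituting $\Phi(t_n+\sigma)=e^{-i\sigma\mathcal{T}^\eps/\eps^2}\Phi(t_n)+O_{L^2}(\sigma)$ from a second Duhamel iteration (the $O(\sigma)$ error being controlled uniformly in $\eps$ by $\|V\|_{L^\infty}$ and $\|\Phi\|_{L^\infty H^2}$ under $(A),(B)$). For $\widetilde{\Phi}^{n+1}$ I Taylor-expand $e^{-i\int_{t_n}^{t_{n+1}}V(s)\,ds}=I-i\int_0^\tau V(t_n+s)\,ds+O_{L^\infty}(\tau^2)$. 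Subtracting and pulling out $e^{-i\tau\mathcal{T}^\eps/\eps^2}$ leaves
$$\mathcal{E}^n = -ie^{-i\tau\mathcal{T}^\eps/\eps^2}\!\int_0^\tau\!\!\Big[e^{i\sigma\mathcal{T}^\eps/\eps^2}V(t_n+\sigma)e^{-i\sigma\mathcal{T}^\eps/\eps^2}-V(t_n+\sigma)\Big]\Phi(t_n)\,d\sigma + R_n,$$
with $\|R_n\|_{L^2}\lesssim\tau^2$.

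Next, inserting the spectral decomposition \eqref{eq:dec:ev} of $e^{\pm i\sigma\mathcal{T}^\eps/\eps^2}$ and using $I_2=\Pi_+^\eps+\Pi_-^\eps$ to split $V(t_n+\sigma)\Phi(t_n)$ expands the bracket into four pieces labelled $(a,b)\in\{+,-\}^2$. The cross pieces $(+,-)$ and $(-,+)$ carry rapid phases $e^{\pm 2i\sigma/\eps^2}$; freezing $V(t_n+\sigma)\to V(t_n)$ costs an $O(\sigma)$ correction by $V\in W^{1,\infty}_t$ (contributing $O(\tau^2)$ after integration), and the commutation $[\mathcal{D}^\eps,\Pi_\pm^\eps]=0$ rearranges the result into exactly $f_2^n(\sigma)$. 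The diagonal pieces $(+,+)$ and $(-,-)$, with their matching subtractions $V(t_n+\sigma)\Pi_\pm^\eps\Phi(t_n)$, form a non-oscillatory remainder $\mathrm{NonOsc}(\sigma)$. The pivotal identity is
$$\mathrm{NonOsc}(0) = (\Pi_+^\eps V(t_n)-V(t_n))\Pi_+^\eps\Phi(t_n)+(\Pi_-^\eps V(t_n)-V(t_n))\Pi_-^\eps\Phi(t_n) = -f_2^n(0),$$
since $V(t_n)$ is scalar-valued and $\Pi_+^\eps+\Pi_-^\eps=I_2$ force $\Pi_\pm^\eps V-V=-\Pi_\mp^\eps V$.

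Finally, since $\mathcal{D}^\eps:H^2\to L^2$ and $\Pi_\pm^\eps:H^s\to H^s$ are uniformly bounded in $\eps$, the expansion $e^{\pm i\sigma\mathcal{D}^\eps}=I+O_{H^2\to L^2}(\sigma)$ gives $\mathrm{NonOsc}(\sigma)=\mathrm{NonOsc}(0)+O_{L^2}(\sigma)$ uniformly, hence $\int_0^\tau\mathrm{NonOsc}(\sigma)\,d\sigma = -\tau f_2^n(0)+O(\tau^2)$. Combining with the oscillatory contribution produces $\mathcal{E}^n = -ie^{-i\tau\mathcal{T}^\eps/\eps^2}\!\big(\int_0^\tau f_2^n(\sigma)\,d\sigma - \tau f_2^n(0)\big) + \eta_1^n$ with $\|\eta_1^n\|_{L^2}\lesssim\tau^2$, matching the claim. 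The main obstacle is precisely the identity $\mathrm{NonOsc}(0)=-f_2^n(0)$: both pieces are individually $O(1)$ uniformly in $\eps$, so the decomposition is only useful because the leading $O(\tau)$ contribution of the non-oscillatory piece is cancelled by the $-\tau f_2^n(0)$ subtraction built into $\eta_2^n$, leaving the fast phase $e^{\pm 2i\sigma/\eps^2}$ in $\eta_2^n$ intact for later exploitation via summation by parts.
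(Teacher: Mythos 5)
Your proposal is correct and follows the paper's route: Duhamel plus Taylor expansion of the exponential isolates the $O(\tau)$ local truncation term, which is split via the spectral decomposition \eqref{eq:dec:ev} into the rapidly oscillating $f_2^n$ and a smooth remainder, and the smooth remainder together with the $O(\tau^2)$ algebraic residuals forms $\eta_1^n$. The only cosmetic difference is that you recover the subtraction $-\tau f_2^n(0)$ from the identity $\mathrm{NonOsc}(0)=-f_2^n(0)$ (which is just the statement $f^n(0)=V(t_n)\Phi(t_n)$, i.e.\ $\Pi_+^\eps+\Pi_-^\eps=I_2$), whereas the paper keeps $-\tau f^n(0)$ intact and splits $f^n=f_1^n+f_2^n$ directly, yielding the same decomposition.
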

\begin{proof}
	From the definition of ${\bf e}^n(x)$, noticing the Lie-Trotter splitting formula \eqref{eq:Lie-trotter}, we have 	\be\label{eq:rec1}
	{\bf e}^{n+1}(x)=e^{-\frac{i\tau}{\eps^2}\mathcal{T}^\eps}e^{-i\int_{t_n}^{t_{n+1}}V(s,x)\,ds}{\bf e}^n(x)+\eta^n(x), \quad 0\leq n \leq \frac{T}{\tau} - 1, \quad x\in{\Bbb R},
	\ee
	where $\eta^n(x)$ is the local truncation error defined as
	\be\label{eq:loc1}
	\eta^n(x)=\Phi(t_{n+1},x)-e^{-\frac{i\tau}{\eps^2}\mathcal{T}^\eps}e^{-i\int_{t_n}^{t_{n+1}}V(s,x)\,ds}\Phi(t_n,x), \quad x\in{\Bbb R}.
	\ee
	Noticing \eqref{eq:dirac_nomag}, applying Duhamel's principle, we derive
	\be
	\label{eq:Duhamel}
	\Phi(t_{n + 1}, x) = e^{-\frac{i\tau}{\eps^2}\mathcal{T}^\eps}\Phi(t_n, x) - i\int_0^\tau
	e^{-\frac{i(\tau - s)}{\eps^2}\mathcal{T}^\eps}V(t_n + s, x)\Phi(t_n + s, x)ds,
	\ee
	while Taylor expansion gives
	\begin{align}\label{eq:tl:s1}
	&e^{-\frac{i\tau}{\eps^2}\mathcal{T}^\eps}e^{-i\int_{t_n}^{t_{n+1}}V(s,x)\,ds}\Phi(t_n,x)\nonumber\\
	&=e^{-\frac{i\tau}{\eps^2}\mathcal{T}^\eps}\left(1-i\int_{t_n}^{t_{n+1}}V(s,x)\,ds+O(\tau^2)\right)\Phi(t_n,x).
	\end{align}
	Combining \eqref{eq:Duhamel}, \eqref{eq:tl:s1} and \eqref{eq:loc1}, we get
	\begin{align}
	\eta^n(x)
	=&\tau ie^{-\frac{i\tau}{\eps^2}\mathcal{T}^\eps}V(t_n)\Phi(t_n)-
	i\int_0^{\tau}e^{-\frac{i(\tau-s)}{\eps^2}\mathcal{T}^\eps}\left(V(t_n)
	e^{-\frac{is}{\eps^2}\mathcal{T}^\eps}\Phi(t_n)\right)\,ds\nonumber\\&+ \sum_{j=1}^2{R}_j^n(x),\label{eq:S1_eta}
	\end{align}
	where
	\begin{align*}
	{R}_1^n(x) &= e^{-\frac{i\tau}{\eps^2}\mathcal{T}^\eps}\left(\lambda_1^n(x)+\lambda_2^n(x)\right)\Phi(t_n,x),\\
	{R}_2^n(x)
	&=-i\int_0^{\tau}e^{-\frac{i(\tau-s)}{\eps^2}\mathcal{T}^\eps}\left(V(t_n )
	\lambda_4^{n}(s,x)+\lambda^n_3(s,x )
	\Phi(t_n+s,x)\right)\,ds,
	\end{align*}
	with
	\begin{align}
	\label{eq:Taylor}
	&\lambda_1^n(x)=e^{-i\int_{t_n}^{t_{n + 1}}V(s, x)ds} - \left(1 - i\int_{t_n}^{t_{n + 1}}V(s, x)ds\right),\\
	\label{eq:integ}
	&\lambda_2^n(x)=-i\int_{t_n}^{t_{n+1}}V(u,x)\,du+i\tau V(t_n,x), \\
	&\lambda_3^n(s,x)=V(t_{n}+s,x)-V(t_n,x), \quad 0\leq s\leq\tau,\label{eq:integ2}\\
	&\lambda_4^n(s,x)= -i\int_0^se^{-\frac{i(s-w)}{\eps^2}\mathcal{T}^\eps}\left(V(t_n+w,x)\Phi(t_n+w,x)\right)\,dw,\quad 0\leq s\leq\tau.\label{eq:Duhamel2}
	\end{align}
	It is easy to see that for $ 0\leq n\leq\frac{T}{\tau} - 1$,
	\begin{align*}
	&\|\lambda_1^n(x)\|_{L^\infty}\lesssim\tau^2\|V(t, x)\|_{L^\infty\left(L^\infty\right)}^2,\quad
	\|\lambda_2^n(x)\|_{L^\infty}\lesssim\tau^2\|\partial_tV(t, x)\|_{L^\infty\left(L^\infty\right)},\\	&\|\lambda_3^n(s,x)\|_{L^\infty([0,\tau];L^\infty)}\lesssim \tau\|\partial_tV(t, x)\|_{L^\infty\left(L^\infty\right)},\\
	&\|\lambda_4^n(s,x)\|_{L^\infty([0,\tau];(L^2)^2)}\lesssim \tau\|V(t, x)\|_{L^\infty\left(L^\infty\right)}\|\Phi(t, x)\|_{L^\infty\left((L^2)^2\right)},
	\end{align*}
	As a consequence, we obtain the following bounds for $0\leq n\leq\frac{T}{\tau}-1$,
	\begin{align}\label{eq:S1_R1}
	\|{R}_1^n(x)\|_{L^2}&\lesssim \left(\|\lambda_1^n(x)\|_{L^\infty}+\|\lambda_2^n(x)\|_{L^\infty}\right)\|\Phi(t_n)\|_{L^2}\lesssim \tau^2, \\
	\label{eq:S1_R3}
	\|{R}_2^n(x)\|_{L^2}&\lesssim \tau\bigg(\|V(t_n)\|_{L^\infty}\|\lambda_4^n(s,x)\|_{L^\infty([0,\tau];(L^2)^2)} \nonumber\\
	&\qquad + \|\lambda_3^n(s,x)\|_{L^\infty([0,\tau];L^\infty)}\|\Phi\|_{L^\infty\left((L^2)^2\right)}\bigg)\lesssim \tau^2.
	\end{align}
	Recalling $\eta_2^n(x)$ given in Lemma \ref{lemma:lie}, we introduce for $0\leq s\leq\tau$
	\be\label{eq:S1_f}
	f^n(s):=f^n(s,x)=e^{\frac{is}{\eps^2}\mathcal{T}^\eps}\left(V(t_n,x)e^{-\frac{is}{\eps^2}\mathcal{T}^\eps}\Phi(t_n,x)\right)=f_1^n(s)+f_2^n(s),
	\ee
	with $f_2^n$ given in \eqref{eq:f2nlie} and $f_1^n$ from the decomposition \eqref{eq:dec:ev} as
	\begin{align*}
	f_1^n(s) &= e^{is\mathcal{D}^\eps}\;\Pi_+^\eps\left(V(t_n)e^{-is\mathcal{D}^\eps}\Pi_+^\eps\Phi(t_n)\right)+
	e^{-is\mathcal{D}^\eps}\;\Pi_-^\eps\left(V(t_n)e^{is\mathcal{D}^\eps}\Pi_-^\eps\Phi(t_n)\right),	\end{align*}
	and then $\eta^n(x)$ \eqref{eq:S1_eta} can be written as
	\begin{align}\label{eq:split1}
	\eta^n(x) = &-ie^{-\frac{i\tau}{\eps^2}\mathcal{T}^\eps}\left(\int_0^\tau (f_1^n(s)+f_2^n(s))ds-\tau (f_1^n(0)+f_2^n(0))\right)\nonumber\\
	&+ R_1^n(x) + R_2^n(x).
	\end{align}
	Now, it is easy to verify that $\eta^n(x)=\eta_1^n(x)+\eta_2^n(x)$ with $\eta_2^n(x)$  given in Lemma \ref{lemma:lie} if we let
	\be
	\eta_1^n(x)=-ie^{-\frac{i\tau}{\eps^2}\mathcal{T}^\eps}\left(\int_0^\tau f_1^n(s)ds-\tau f_1^n(0)\right)+ R_1^n(x) + R_2^n(x) .
	\ee
	Noticing that
	\begin{align*}
	&\left\|e^{-\frac{i\tau}{\eps^2}\mathcal{T}^\eps}\left(\int_0^\tau f_1^n(s)ds-\tau f_1^n(0)\right)\right\|_{L^2}\\
	&\lesssim\; \tau^2\|\partial_sf_1^n(\cdot)\|_{L^\infty([0,\tau];(L^2)^2)}
	\lesssim\;
	\tau^2\|V(t_n)\|_{W^{2,\infty}}\|\Phi(t_n)\|_{H^2},
	\end{align*}
	recalling the regularity assumptions $(A)$ and $(B)$, combining  \eqref{eq:S1_R1} and \eqref{eq:S1_R3} , we can get
	\begin{align*}
	\|\eta_1^n(x)\|_{L^2} &\leq \|{R}_1^n(x)\|_{L^2} + \|{R}_2^n(x)\|_{L^2} +\left\|e^{-\frac{i\tau}{\eps^2}\mathcal{T}^\eps}\left(\int_0^\tau f_1^n(s)ds-\tau f_1^n(0)\right)\right\|_{L^2} \\
	&\lesssim \tau^2,
	\end{align*}
	which completes  the proof of Lemma \ref{lemma:lie}.
\end{proof}

\begin{lemma}\label{lemma:strang}
	Let $\Phi^n(x)$ be the
	numerical approximation obtained from the Strang splitting $S_2$ \eqref{eq:Strang},
	then under the assumptions $(A)$ and $(B)$ with $m = 2$ and $m_* = 0$, we have for $0\leq n\leq \frac{T}{\tau} - 1$,
	\be
	{\bf e}^{n+1}(x) = e^{-\frac{i\tau}{2\eps^2}\mathcal{T}^\eps}e^{-i\int_{t_n}^{t_{n+1}}V(s, x)ds}e^{-\frac{i\tau}{2\eps^2}\mathcal{T}^\eps}{\bf e}^n(x) + \eta_1^n(x) + \eta_2^n(x) + \eta_3^n(x),
	\ee
	with
	\begin{align}\label{eq:eta2:lemma}
	&\|\eta_1^n(x)\|_{L^2}\lesssim\tau^3,\quad \eta_2^n(x) = -ie^{-\frac{i\tau}{\eps^2}\mathcal{T}^\eps}\left(\int_0^\tau f_2^n(s)ds - \tau f_2^n(\tau/2)\right), \\
	&\eta_3^n(x) = -e^{-\frac{i\tau}{\eps^2}\mathcal{T}^\eps}\left(\int_0^\tau\int_0^s \sum_{j = 2}^4g_j^n(s, w)dwds - \frac{\tau^2}{2}\sum_{j = 2}^4g_j^n(\tau/2, \tau/2)\right),\label{eq:eta3:lemma}
	\end{align}
	where
	\begin{align}
	f_2^n(s) =& e^{\frac{i2s}{\eps^2}}e^{is\mathcal{D}^\eps}\;\Pi_+^\eps(V(t_n+s)e^{is\mathcal{D}^\eps}\Pi_-^\eps\Phi(t_n)\nonumber)\\
	&+
	e^{\frac{-i2s}{\eps^2}}e^{-is\mathcal{D}^\eps}\;\Pi_-^\eps(V(t_n+s)e^{-is\mathcal{D}^\eps}\Pi_+^\eps\Phi(t_n)),\label{eq:f2ns}\\
	g_2^n(s, w) =& e^{i2w/\eps^2}e^{is\mathcal{D}^\eps}\Pi_+^\eps\left(V(t_n)e^{-i(s - w)\mathcal{D}^\eps}\Pi_+^\eps\left(V(t_n)e^{iw\mathcal{D}^
		\eps}\Pi_-^\eps\Phi(t_n)\right)\right)\nonumber\\
	& + e^{-i2w/\eps^2}e^{-is\mathcal{D}^\eps}\Pi_-^\eps\left(V(t_n)e^{i(s - w)\mathcal{D}^\eps}\Pi_-^\eps\left(V(t_n)e^{-iw\mathcal{D}^
		\eps}\Pi_+^\eps\Phi(t_n)\right)\right),\label{eq:g2ns}\\
	g_3^n(s, w) =&e^{\frac{i2(s-w)}{\eps^2}} e^{is\mathcal{D}^\eps}\Pi_+^\eps\left(V(t_n)e^{i(s - w)\mathcal{D}^\eps}\Pi_-^\eps\left(V(t_n)e^{-iw\mathcal{D}^
		\eps}\Pi_+^\eps\Phi(t_n)\right)\right)\nonumber\\
	& + e^{-\frac{i2(s-w)}{\eps^2}}e^{-is\mathcal{D}^\eps}\Pi_-^\eps\left(V(t_n)e^{-i(s - w)\mathcal{D}^\eps}\Pi_+^\eps\left(V(t_n)e^{iw\mathcal{D}^
		\eps}\Pi_-^\eps\Phi(t_n)\right)\right),\label{eq:g3ns}\\
	g_4^n(s, w) =& e^{i2s/\eps^2} e^{is\mathcal{D}^\eps}\Pi_+^\eps\left(V(t_n)e^{i(s - w)\mathcal{D}^\eps}\Pi_-^\eps\left(V(t_n)e^{iw\mathcal{D}^
		\eps}\Pi_-^\eps\Phi(t_n)\right)\right)\nonumber\\
	& + e^{-i2s/\eps^2}e^{-is\mathcal{D}^\eps}\Pi_-^\eps\left(V(t_n)e^{-i(s - w)\mathcal{D}^\eps}\Pi_+^\eps\left(V(t_n)e^{-iw\mathcal{D}^
		\eps}\Pi_+^\eps\Phi(t_n)\right)\right).\label{eq:g4ns}
	\end{align}
	
\end{lemma}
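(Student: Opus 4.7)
The plan is to mirror the proof of Lemma~\ref{lemma:lie}, but push the Duhamel expansion one order further, producing a midpoint-rule rather than a left-endpoint quadrature at the leading order. First I set
\[
\eta^n(x) = \Phi(t_{n+1},x) - e^{-\frac{i\tau}{2\eps^2}\mathcal{T}^\eps}e^{-i\int_{t_n}^{t_{n+1}}V(s,x)\,ds}e^{-\frac{i\tau}{2\eps^2}\mathcal{T}^\eps}\Phi(t_n,x),
\]
so that ${\bf e}^{n+1}(x) = e^{-\frac{i\tau}{2\eps^2}\mathcal{T}^\eps}e^{-i\int_{t_n}^{t_{n+1}}V(s,x)\,ds}e^{-\frac{i\tau}{2\eps^2}\mathcal{T}^\eps}{\bf e}^n(x) + \eta^n(x)$. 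Applying Duhamel~\eqref{eq:Duhamel} once to $\Phi(t_{n+1})$ and once more inside the integrand for $\Phi(t_n+s)$ yields a single-integral term, a double-integral term, and an $O(\tau^3)$ cubic-in-$V$ remainder. Symmetrically, I Taylor expand $e^{-i\int_{t_n}^{t_{n+1}}V(s,x)ds}$ to second order in the small quantity $\int V\,ds = O(\tau)$, getting linear and quadratic terms in $\int V\,ds$ plus an $O(\tau^3)$ remainder. Matching by order in $V$ and factoring out $e^{-i\tau\mathcal{T}^\eps/\eps^2}$, the leading part of $\eta^n$ becomes a midpoint-quadrature discrepancy
\[
-ie^{-\frac{i\tau}{\eps^2}\mathcal{T}^\eps}\!\left(\int_0^\tau f^n(s)ds - \tau f^n(\tau/2)\right) - e^{-\frac{i\tau}{\eps^2}\mathcal{T}^\eps}\!\left(\int_0^\tau\!\!\int_0^s g^n(s,w)dwds - \tfrac{\tau^2}{2}g^n(\tau/2,\tau/2)\right),
\]
with $f^n(s) = e^{is\mathcal{T}^\eps/\eps^2}\bigl(V(t_n+s)\,e^{-is\mathcal{T}^\eps/\eps^2}\Phi(t_n)\bigr)$ and $g^n(s,w) = e^{is\mathcal{T}^\eps/\eps^2}\bigl(V(t_n)e^{-i(s-w)\mathcal{T}^\eps/\eps^2}(V(t_n)e^{-iw\mathcal{T}^\eps/\eps^2}\Phi(t_n))\bigr)$; the error from freezing $V(t_n+w) \to V(t_n)$ inside the inner Duhamel is $O(\tau)$ times a double integral, hence $O(\tau^3)$ overall and safely absorbed into $\eta_1^n$.

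Next I invoke the decomposition~\eqref{eq:dec:ev}, $e^{\pm it\mathcal{T}^\eps/\eps^2} = e^{\pm it/\eps^2}e^{\pm it\mathcal{D}^\eps}\Pi_+^\eps + e^{\mp it/\eps^2}e^{\mp it\mathcal{D}^\eps}\Pi_-^\eps$, to split $f^n$ and $g^n$ by outer/inner projector sign choices. For $f^n$, the diagonal pairings $(+,+)$ and $(-,-)$ have cancelling $e^{\pm it/\eps^2}$ phases and form a non-oscillatory piece $f_1^n$, while the off-diagonal pairings $(+,-)$ and $(-,+)$ retain phases $e^{\pm 2is/\eps^2}$ and give exactly $f_2^n$ in~\eqref{eq:f2ns}. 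For $g^n$, the eight sign triples split as: the two fully cancelling ones $(+,+,+),(-,-,-)$ give a non-oscillatory $g_1^n$; the triples $(+,+,-),(-,-,+)$ carry phase $e^{\pm 2iw/\eps^2}$ and combine into $g_2^n$; the triples $(+,-,+),(-,+,-)$ carry $e^{\pm 2i(s-w)/\eps^2}$ and form $g_3^n$; the triples $(+,-,-),(-,+,+)$ carry $e^{\pm 2is/\eps^2}$ and form $g_4^n$, matching~\eqref{eq:g2ns}--\eqref{eq:g4ns}.

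Third, I estimate the non-oscillatory pieces by the standard midpoint-rule remainder, giving $\|\int_0^\tau f_1^n(s)ds - \tau f_1^n(\tau/2)\|_{L^2} \lesssim \tau^3 \|\partial_s^2 f_1^n\|_{L^\infty([0,\tau];(L^2)^2)}$ and the analogous bound for the $g_1^n$ double integral. Each $s$- or $w$-derivative on $f_1^n$ or $g_1^n$ brings down either a factor $\mathcal{D}^\eps$ (uniformly bounded $H^k \to H^{k-2}$ by~\eqref{eq:ddef}) or a $\partial_t V$, so the regularity assumptions $(A)$ and $(B)$ with $m=2$, $m_*=0$ (i.e.\ $V \in W^{2,\infty}([0,T];L^\infty) \cap L^\infty([0,T];W^{4,\infty})$ and $\Phi \in L^\infty([0,T];(H^4)^2)$) deliver the $\tau^3$ bound uniformly in $\eps$. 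Combined with the Taylor cubic and the $V(t_n+w) \to V(t_n)$ freezing error, these contributions assemble into $\eta_1^n$ with $\|\eta_1^n\|_{L^2} \lesssim \tau^3$, while the explicitly oscillatory pieces remain as $\eta_2^n$ and $\eta_3^n$ as stated.

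The main obstacle is the bookkeeping in the second step: correctly enumerating the eight sign triples arising from the double Duhamel, identifying which pairs generate each of the three oscillation patterns $e^{\pm 2iw/\eps^2}$, $e^{\pm 2i(s-w)/\eps^2}$, $e^{\pm 2is/\eps^2}$, and verifying that $g_1^n$ is genuinely phase-free so the $\eps$-uniform midpoint estimate applies. A secondary concern is aligning the second-order Taylor term of $e^{-i\int V}$ with the quadratic Duhamel term so that the double-integral quadrature really is centered at $(\tau/2,\tau/2)$; this exploits the symmetry of the Strang formula, which is also what makes $f^n(\tau/2)$ (rather than a left endpoint) the natural evaluation point at first order.
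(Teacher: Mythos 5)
Your proposal is correct and follows essentially the same route as the paper's own proof: double Duhamel expansion of $\Phi(t_{n+1})$, second-order Taylor expansion of $e^{-i\int V}$, factoring out $e^{-i\tau\mathcal{T}^\eps/\eps^2}$ to expose midpoint-quadrature discrepancies in $f^n$ and $g^n$, and then using the decomposition \eqref{eq:dec:ev} to split each into a non-oscillatory diagonal part (absorbed into $\eta_1^n$ via the $O(\tau^3)$ midpoint remainder under the stated $H^4/W^{4,\infty}$ regularity) and the oscillatory off-diagonal parts that become $\eta_2^n,\eta_3^n$. Your enumeration of the eight sign triples and the resulting phases $e^{\pm 2iw/\eps^2}$, $e^{\pm 2i(s-w)/\eps^2}$, $e^{\pm 2is/\eps^2}$ for $g_2^n,g_3^n,g_4^n$ is exactly right and matches the paper's \eqref{eq:g2ns}--\eqref{eq:g4ns}.
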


\begin{proof}
	From the definition of ${\bf e}^n(x)$, noticing the Strang splitting formula \eqref{eq:Strang}, we have
	\be\label{eq:rec2}
	{\bf e}^{n+1}(x)=e^{-\frac{i\tau}{2\eps^2}\mathcal{T}^\eps}e^{-i\int_{t_n}^{t_{n+1}}V(s,x)\,ds}e^{-\frac{i\tau}{2\eps^2}\mathcal{T}^\eps}{\bf e}^n(x)+\eta^n(x), \quad x\in{\Bbb R},
	\ee
	where $\eta^n(x)$ is the local truncation error defined as
	\be\label{eq:loc2}
	\eta^n(x)=\Phi(t_{n+1},x)-e^{-\frac{i\tau}{2\eps^2}\mathcal{T}^\eps}e^{-i\int_{t_n}^{t_{n+1}}V(s,x)\,ds}e^{-\frac{i\tau}{2\eps^2}\mathcal{T}^\eps}\Phi(t_n,x), \quad x\in{\Bbb R}.
	\ee
	Similar to the $S_1$ case, repeatedly using Duhamel's principle and Taylor expansion, we can
	obtain
	\begin{align}\label{eq:taylor1}
	&\Phi(t_{n+1})\nonumber\\
	&=e^{-\frac{i\tau}{\eps^2}\mathcal{T}^\eps}\Phi(t_n)-i\int_0^\tau e^{-\frac{i(\tau-s)}{\eps^2}\mathcal{T}^\eps}\left(V(t_n+s)e^{-\frac{is}{\eps^2}\mathcal{T}^\eps}\Phi(t_n)\right)\,ds\nonumber\\
	&\quad-\int_0^\tau\int_0^se^{-\frac{i(\tau-s)}{\eps^2}\mathcal{T}^\eps}\left(V(t_n)e^{-\frac{i(s-w)}{\eps^2}\mathcal{T}^\eps}\left(V(t_n+w)\Phi(t_n+w)\right)\right)\,dw\,ds,\end{align}
	\begin{align}
	&e^{-\frac{i\tau}{2\eps^2}\mathcal{T}^\eps}e^{-i\int_{t_n}^{t_{n+1}}V(s)\,ds}e^{-\frac{i\tau}{2\eps^2}\mathcal{T}^\eps}\Phi(t_n)\nonumber\\ &=e^{-\frac{i\tau}{2\eps^2}\mathcal{T}^\eps}\left(1-i\int_{0}^{\tau}V(t_n+s)\,ds-\frac{1}{2}(\int_{0}^{\tau}V(t_n+s)\,ds)^2\right)
	e^{-\frac{i\tau}{2\eps^2}\mathcal{T}^\eps}\Phi(t_n)\nonumber\\
	&\quad+e^{-\frac{i\tau}{2\eps^2}\mathcal{T}^\eps}\left(O(\tau^3)\right)e^{-\frac{i\tau}{2\eps^2}\mathcal{T}^\eps}\Phi(t_n).\label{eq:taylor2}
	\end{align}
	Denoting
	\be
	\label{eq:fn:S2}f^n(s)=e^{\frac{is}{\eps^2}\mathcal{T}^\eps}\left(V(t_n+s,x)e^{-\frac{is}{\eps^2}\mathcal{T}^\eps}\Phi(t_n,x)\right),
	\ee
	for $0\leq s\leq\tau$, and
	\be
	g^n(s,w)=e^{\frac{is}{\eps^2}\mathcal{T}^\eps}\left(V(t_n,x)e^{-\frac{i(s-w)}{\eps^2}\mathcal{T}^\eps}\left(V(t_n,x)e^{-\frac{iw}{\eps^2}\mathcal{T}^\eps}\Phi(t_n,x)\right)\right),\label{eq:gn:S2}
	\ee
	for $0\leq s,w\leq \tau$, in view of \eqref{eq:taylor1} and \eqref{eq:taylor2},  $\eta^n(x)$ \eqref{eq:loc2} can be written as
	\begin{align}
	\eta^n(x)
	=&	
	-e^{-\frac{i\tau}{\eps^2}\mathcal{T}^\eps}\bigg[i\int_0^\tau f^n(s)\,ds-i\tau f^n\left(\frac{\tau}{2}\right)
	+\int_0^\tau\int_0^sg^n(s,w)\,dwds\nonumber\\
	&\qquad\qquad\qquad-\frac{\tau^2}{2}g^n\left(\frac{\tau}{2},\frac{\tau}{2}\right)\bigg]+\sum_{j=1}^2R_j^n(x),\label{eq:eta-S2:1}\end{align}
	where 	
	\begin{align}
	&R_1^n(x)=-e^{-\frac{i\tau}{2\eps^2}\mathcal{T}^\eps}(\lambda_1^n(x)+\lambda_2^n(x))e^{-\frac{i\tau}{2\eps^2}\mathcal{T}^\eps}\Phi(t_n,x),\nonumber\\
	&R_2^n(x)\nonumber\\
	&=-\int_0^\tau\int_0^se^{-\frac{i(\tau-s)}{\eps^2}\mathcal{T}^\eps}\bigg(V(t_n+s,x)e^{-\frac{i(s-w)}{\eps^2}\mathcal{T}^\eps}\left(V(t_n+w,x)\lambda_3^n(w,x)\right)\bigg)\,dw\,ds,\nonumber
	\end{align}
	with
	\begin{align*}
	&\lambda_1^n(x)=-i\left(\int_{0}^{\tau}V(t_n + s,x)\,ds-\tau V(t_n+\frac{\tau}{2},x)\right)
	-\frac{1}{2}
	\left(\int_0^{\tau}V(t_n+s,x)ds\right)^2\\
	&\qquad\qquad+\frac{1}{2}\tau^2V^2(t_n,x),\\
	&\lambda_2^n(x)=e^{-i\int_0^{\tau}V(t_n+s,x)ds}-1+i\int_0^{\tau}V(t_n+s,x)ds+\frac{1}{2}
	\left(\int_0^{\tau}V(t_n+s,x)ds\right)^2,\\
	&\lambda_3^n(w,x) = -i\int_0^we^{-\frac{i(w-u)}{\eps^2}\mathcal{T}^\eps}\left(V(t_n+u,x)\Phi(t_n+u,x)\right)\,du.
	\end{align*}
	It is easy to check that $\|\lambda_2^n(x)\|_{L^\infty}\lesssim \tau^3\|V(t, x)\|^3_{L^\infty\left(L^\infty\right)}$ and
	\begin{align*}
	&\|\lambda_1^n(x)\|_{L^\infty}\lesssim\tau^3\|\partial_{tt}V(t, x)\|_{L^\infty\left(L^\infty\right)}+\tau^3\|\partial_{t}V(t, x)\|_{L^\infty\left(L^\infty\right)}\|V(t, x)\|_{L^\infty\left(L^\infty\right)}, \\
	& \|\lambda_3^n(w,x)\|_{L^\infty([0,\tau];(L^2)^2)}\lesssim \tau \|V(t,x)\|_{L^\infty\left(L^\infty\right)}
	\|\Phi\|_{L^\infty((L^2)^2)},
	\end{align*}
	which immediately implies that
	\begin{align}\label{eq:S2_R1}
	\|R_1^n(x)\|_{L^2} &\lesssim\left (\|\lambda_1^n(x)\|_{L^\infty}+\|\lambda_2^n(x)\|_{L^\infty}\right)\|\Phi(t_n)\|_{L^2}\lesssim \tau^3, \\
	\|R_2^n(x)\|_{L^2} &\lesssim \tau^2\|V(t,x)\|_{L^\infty(L^\infty)}^2 \|\lambda_3^n(w,x)\|_{L^\infty([0,\tau];L^2)}\lesssim \tau^3.\label{eq:S2_R2}
	\end{align}
	In view of \eqref{eq:dec:ev},  recalling the definitions of $f_2^n(s)$ and $g_j^n(s,w)$ ($j=2,3,4$) given in Lemma \ref{lemma:strang}, we introduce $f_1^n(s)$ and $g_1^n(s,w)$ such that
	\be\label{eq:S2_fg}
	f^n(s)=f_1^n(s) + f_2^n(s), \quad g^n(s, w) = \sum_{j = 1}^4g_j^n(s, w)
	\ee
	where
	\begin{align*}
	f_1^n(s) &= e^{is\mathcal{D}^\eps}\;\Pi_+^\eps\left(V(t_n+s)e^{-is\mathcal{D}^\eps}\Pi_+^\eps\Phi(t_n)\right)\\
	&\qquad+
	e^{-is\mathcal{D}^\eps}\;\Pi_-^\eps\left(V(t_n+s)e^{is\mathcal{D}^\eps}\Pi_-^\eps\Phi(t_n)\right),\\
	g_1^n(s, w) &= e^{is\mathcal{D}^\eps}\Pi_+^\eps\left(V(t_n)e^{-i(s - w)\mathcal{D}^\eps}\Pi_+^\eps\left(V(t_n)e^{-iw\mathcal{D}^
		\eps}\Pi_+^\eps\Phi(t_n)\right)\right)\\
	& \qquad+ e^{-is\mathcal{D}^\eps}\Pi_-^\eps\left(V(t_n)e^{i(s - w)\mathcal{D}^\eps}\Pi_-^\eps\left(V(t_n)e^{iw\mathcal{D}^
		\eps}\Pi_-^\eps\Phi(t_n)\right)\right).
	\end{align*}
	Denote
	\begin{align*}
	\zeta_{1}^n(x) &= -ie^{-\frac{i\tau}{\eps^2}\mathcal{T}^\eps}\left(\int_0^\tau f_1^n(s)\,ds-\tau f_1^n(\tau/2)\right),\\
	\zeta_{2}^n(x) &= -e^{-\frac{i\tau}{\eps^2}\mathcal{T}^\eps}\left(\int_0^\tau\int_0^sg_1^n(s,w)\,dwds-\frac{\tau^2}{2}g_1^n(\tau/2,\tau/2)\right),
	\end{align*}
	then it is easy to show that  for $J=[0,\tau]^2$,	
	\begin{align}\label{eq:S2_zeta}
	&\|\zeta_{1}^n(x)\|_{L^2}\lesssim \tau^3\|\partial_{ss}f_1(s)\|_{L^\infty([0,\tau];(L^2)^2)}\lesssim\tau^3,\\
	& \|\zeta_{2}^n(x)\|_{L^2} \lesssim   \tau^3(\|\partial_sg_1(s,w)\|_{L^\infty(J;(L^2)^2)}+\|\partial_wg_1(s,w)\|_{L^2(J;(L^2)^2)})\lesssim\tau^3,\label{eq:S2_zeta:2}
	\end{align}
	by noticing that $V\in L^\infty(W^{2m,\infty})$ and $\Phi(t,x)\in L^\infty((H^{2m})^2)$ with $m=2$
	as well as the fact that $\mathcal{D}^\eps:(H^{l})^2\to (H^{l-2})^2$ ($l\ge2$) is uniformly bounded w.r.t. $\eps$.
	Recalling \eqref{eq:fn:S2}, \eqref{eq:gn:S2}, \eqref{eq:eta-S2:1}, \eqref{eq:S2_fg} and $\eta_j^n$ ($j=2,3$) \eqref{eq:eta2:lemma}-\eqref{eq:eta3:lemma} given in Lemma \ref{lemma:strang}, we have
	\be
	\eta^n(x) = \eta_1^n(x) + \eta_2^n(x) + \eta_3^n(x),
	\ee
	where $\eta^n_2(x)$ and $\eta^n_3(x)$ are given in Lemma \ref{lemma:strang}, and
	\begin{align*}
	\eta_1^n(x) = R_1^n(x) + R_2^n(x) + \zeta_{1}^n(x) + \zeta_{2}^n(x).
	\end{align*}
	Combining \eqref{eq:S2_R1}, \eqref{eq:S2_R2}, \eqref{eq:S2_zeta} and \eqref{eq:S2_zeta:2}, we can get
	\be
	\|\eta_1^n(x)\|_{L^2} \leq \|R_1^n(x)\|_{L^2} + \|R_2^n(x)\|_{L^2}  + \|\zeta_{1}^n(x)\|_{L^2} + \|\zeta_{2}^n(x)\|_{L^2} \lesssim\tau^3,
	\ee
	which completes the proof.
\end{proof}

Utilizing these lemmas, we now proceed to prove Theorems \ref{thm:lie} and \ref{thm:strang}.

\bigskip

{\bf Proof of Theorem \ref{thm:lie}}
\begin{proof}
	From Lemma \ref{lemma:lie}, it is straightforward that
	\be\label{eq:rec:1}
	\|{\bf e}^{n+1}(x)\|_{L^2}\leq \|{\bf e}^n(x)\|_{L^2}+\|\eta_1^n(x)\|_{L^2} + \|\eta_2^n(x)\|_{L^2}, \quad 0\leq n\leq\frac{T}{\tau} - 1,
	\ee
	with  ${\bf e}^0(x)=0$, $\|\eta_1^n(x)\|_{L^2}\lesssim\tau^2$ and
	$\eta_2^n(x) = -ie^{-i\tau\mathcal{T}^\eps/\eps^2}\left(\int_0^\tau f_2^n(s)ds - \tau f_2^n(0)\right)$, where $f_2^n(s)$ is defined in \eqref{eq:f2nlie}.
	
	To  analyze  $f_2^n(s)$, using \eqref{eq:pi+} and \eqref{eq:pi-}, we expand $\Pi_+^\eps V(t_n)\Pi_-^\eps$ and $\Pi_-^\eps V(t_n)\Pi_+^\eps$ to get
	\begin{align*}
	\Pi_+^\eps V(t_n)\Pi_-^\eps=&-\eps\Pi_+^0V(t_n)\mathcal{R}_1+\eps\mathcal{R}_1V(t_n)\Pi_-^\eps,\\
	\Pi_-^\eps V(t_n)\Pi_+^\eps=&\eps\Pi_-^0V(t_n)\mathcal{R}_1-\eps\mathcal{R}_1V(t_n)\Pi_+^\eps.
	\end{align*}
	As $\mathcal{R}_1:(H^{m})^2\to (H^{m-1})^2$ is uniformly bounded  with respect to $\eps\in(0,1]$, we have
	\begin{align}\label{eq:pi+eps}
	\left\|\Pi_+^\eps\left(V(t_n)\Pi_-^\eps e^{is\mathcal{D}^\eps}\Phi(t_n)\right)\right\|_{L^2}&\lesssim\eps  \|V(t_n)\|_{W^{1,\infty}}\|\Phi(t_n)\|_{H^1}, \\
	\left\|\Pi_-^\eps\left(V(t_n)\Pi_+^\eps e^{is\mathcal{D}^\eps}\Phi(t_n)\right)\right\|_{L^2}&\lesssim \eps  \|V(t_n)\|_{W^{1,\infty}}\|\Phi(t_n)\|_{H^1}.\label{eq:pi-eps}
	\end{align}
	Noticing the assumptions (A) and (B) with $m=1$ and $m_*=0$, we obtain from \eqref{eq:f2nlie} ($0\leq s\leq\tau$)
	\be\label{eq:S1_ineq}
	\|f_2^n(s)\|_{L^\infty([0,\tau];(L^2)^2)}\lesssim \eps, \quad \|\partial_s(f_2^n)(\cdot)\|_{L^\infty([0,\tau];(L^2)^2)} \lesssim \eps/\eps^2 = 1/\eps.
	\ee
	As a result, from the first inequality, we get
	\be\label{eq:S1_e1}
	\left\|\int_0^\tau f_2^n(s)\,ds-\tau f_2^n(0)\right\|_{L^2}\lesssim \tau\eps.
	\ee
	On the other hand, noticing Taylor expansion and the second inequality in \eqref{eq:S1_ineq}, we have
	\be\label{eq:S1_e2}
	\left\|\int_0^\tau f_2^n(s)\,ds-\tau f_2^n(0)\right\|_{L^2}\leq
	\frac{\tau^2}{2}\|\partial_sf_2^n(\cdot)\|_{L^\infty([0,\tau];(L^2)^2)} \lesssim \tau^2/\eps.	\ee
	Combining \eqref{eq:S1_e1} and \eqref{eq:S1_e2}, we arrive at
	\be
	\|\eta_2^n(x)\|_{L^2}\lesssim \min\{\tau\eps, \tau^2/\eps\}.
	\ee
	Then from \eqref{eq:rec:1} and ${\bf e}^0=0$,  we get
	\begin{align*}
	\|{\bf e}^{n+1}(x)\|_{L^2} \leq & \|{\bf e}^0(x)\|_{L^2} + \sum_{k = 0}^n\|\eta_1^k(x)\|_{L^2} + \sum_{k = 0}^n\|\eta_2^k(x)\|_{L^2}\\
	\lesssim & n\tau^2 + n\min\{\tau\eps, \tau^2/\eps\} \lesssim \tau + \min\{\eps, \tau/\eps\}, \quad 0\leq n\leq\frac{T}{\tau} - 1,
	\end{align*}
	which gives the desired results.
\end{proof}
\smallskip

{\bf Proof of Theorem \ref{thm:strang}}
\begin{proof}
	From Lemma \ref{lemma:strang}, it is easy to get that
	\be\label{eq:rec:2}
	\|{\bf e}^{n+1}(x)\|_{L^2}\leq \|{\bf e}^n(x)\|_{L^2}+\|\eta_1^n(x)\|_{L^2} + \|\eta_2^n(x)\|_{L^2} + \|\eta_3^n(x)\|_{L^2},
	\ee
	with  ${\bf e}^0(x)=0$ and $\|\eta_1^n(x)\|_{L^2}\lesssim \tau^3$.\\
	Through similar computations in the $S_1$ case, under the hypothesis of Theorem \ref{thm:strang}, we can show that for $0\leq s, w,\leq\tau$,
	\begin{align*}
	&\|f_2^n(s)\|_{L^2}\lesssim \eps, \quad \|\partial_sf_2^n(s)\|_{L^2} \lesssim \eps/\eps^2 = 1/\eps, \quad \|\partial_{ss}f_2^n(s)\|_{L^2}\lesssim 1 / \eps^3;\\
	&\|g_j^n(s, w)\|_{L^2}\lesssim\eps, \quad \|\partial_sg_j^n(s, w)\|_{L^2}\lesssim 1 / \eps, \quad \|\partial_wg_j^n(s, w)\|_{L^2}\lesssim 1 / \eps, \quad j = 2, 3, 4.
	\end{align*}
	As a result, for $ j = 2, 3, 4$, we have
	\be\nonumber
	\left\|\int_0^\tau f_2^n(s)\,ds-\tau f_2^n(\frac{\tau}{2})\right\|_{L^2} \lesssim \tau\eps, \; \left\|\int_0^\tau\int_0^sg_j^n(s,w)\,dwds-\frac{\tau^2}{2}g_j^n(\frac{\tau}{2},\frac{\tau}{2})\right\|_{L^2} \lesssim \tau^2\eps.
	\ee
	On the other hand, for $ j = 2, 3, 4$,Taylor expansion will lead to
	\be\nonumber
	\left\|\int_0^\tau f_2^n(s)\,ds-\tau f_2^n(\frac{\tau}{2})\right\|_{L^2} \lesssim \frac{\tau^3}{\eps^3}, \; \left\|\int_0^\tau\int_0^sg_j^n(s,w)\,dwds-\frac{\tau^2}{2}g_j^n(\frac{\tau}{2},\frac{\tau}{2})\right\|_{L^2} \lesssim \frac{\tau^3}{\eps}.
	\ee
	The two estimates above together with \eqref{eq:eta2:lemma} and \eqref{eq:eta3:lemma} imply
	\be
	\|\eta_2^n(x)\|_{L^2} + \|\eta_3^n(x)\|_{L^2}\lesssim\min\{\tau\eps, \tau^3/\eps^3\}.
	\ee
	Recalling \eqref{eq:rec:2}, we can get
	\begin{align*}
	\|{\bf e}^{n+1}(x)\|_{L^2} \leq & \|{\bf e}^0(x)\|_{L^2} + \sum_{k = 0}^n\|\eta_1^k(x)\|_{L^2} + \sum_{k = 0}^n\|\eta_2^k(x)\|_{L^2} + \sum_{k = 0}^n\|\eta_3^k(x)\|_{L^2}\\
	\lesssim & n\tau^3 + n\min\{\tau\eps, \tau^3/\eps^3\} \lesssim \tau^2 + \min\{\eps, \tau^2/\eps^3\}, \quad 0\leq n\leq\frac{T}{\tau} - 1,
	\end{align*}
	which gives the desired results.
\end{proof}

\section{Proof of Theorems \ref{thm:lie2} and \ref{thm:strang2}} \label{sec:nonres}
If  the time step size $\tau$ is away from the resonance, i.e. for given $\eps$, there is a $\delta>0$, such that $\tau\in\mathcal{A}_\delta(\eps)$,  we can show improved uniform error bounds for the splitting methods given in Theorems \ref{thm:lie2} \& \ref{thm:strang2} from Lemmas \ref{lemma:lie} \& \ref{lemma:strang},  as observed in our extensive numerical tests.
\smallskip

{\bf Proof of Theorem \ref{thm:lie2}}
\begin{proof}
	We divide the proof into three steps.
	
	\textbf{Step 1} (Explicit representation of the error). From Lemma \ref{lemma:lie}, we have
	\begin{equation}
	{\bf e}^{n + 1}(x) = e^{-\frac{i\tau}{\eps^2}\mathcal{T}^\eps}e^{-i\int_{t_n}^{t_{n + 1}}V(s)ds}{\bf e}^n(x) + \eta_1^{n}(x) + \eta_2^n(x), \quad 0\leq n\leq \frac{T}{\tau} - 1,
	\end{equation}
	with $\|\eta_1^n(x)\|_{L^2}\lesssim\tau^2$, ${\bf e}^0=0$, $\eta_2^n(x) = -ie^{-\frac{i\tau}{\eps^2}\mathcal{T}^\eps}\left(\int_0^\tau f_2^n(s)ds - \tau f_2^n(0)\right)$ and $f_2^n$ is given in Lemma \ref{lemma:lie} \eqref{eq:f2nlie}.

	Denote the numerical solution propagator  $S_{n, \tau} := e^{-\frac{i\tau}{\eps^2}\mathcal{T}^\eps}e^{-i\int_{t_n}^{t_{n+1}}V(s, x)ds}$ for $n\geq 0$, then
	$\forall \widetilde{\Phi}\in\mathbb{C}^2$, for $m\ge1$,
	\be\label{eq:numint}
	\left\|S_{n,\tau}\widetilde{\Phi}\right\|_{L^2}=\left\|\widetilde{\Phi}\right\|_{L^2},\;
	\left\|S_{n,\tau}\widetilde{\Phi}\right\|_{H^m}
	\leq e^{C\tau\|V(t,x)\|_{L^\infty([0,T];W^{m,\infty})}}\|\widetilde{\Phi}\|_{H^m},
	\ee
	with some generic constant $C>0$
	and
	\begin{align}
	{\bf e}^{n+1}(x) &= S_{n, \tau}{\bf e}^n(x) + \left(\eta_1^n(x) + \eta_2^n(x)\right)\nonumber\\
	&= S_{n, \tau}(S_{n - 1, \tau}{\bf e}^{n - 1}(x)) + S_{n, \tau}\left(\eta_1^{n - 1}(x) + \eta_2^{n - 1}(x)\right) + \left(\eta_1^n(x) + \eta_2^n(x)\right)\nonumber\\
	&= ...\nonumber\\
	&= S_{n, \tau}S_{n - 1, \tau}...S_{0, \tau}{\bf e}^0(x) + \sum_{k = 0}^n S_{n, \tau}...S_{k+2, \tau}S_{k+1, \tau}\left(\eta_1^k(x) + \eta_2^k(x)\right),
	\end{align}
	where for $k = n$, we take $S_{n, \tau}...S_{k+2, \tau}S_{k+1, \tau} = Id$.
	Since $S_{n,\tau}$ preserves the $L^2$ norm, noticing $\|\eta_1^k(x)\|_{L^2}\lesssim \tau^2$, $k = 0, 1, ..., n$, we have
	\begin{equation*}
	\left\|\sum_{k = 0}^nS_{n, \tau}...S_{k+1, \tau}\eta_1^k(x)\right\|_{L^2}\lesssim \sum_{k = 0}^n\tau^2 \lesssim\tau,
	\end{equation*}
	which leads to
	\begin{equation}\label{eq:uniform:s1}
	\|{\bf e}^{n+1}(x)\|_{L^2} \lesssim \tau + \left\|\sum_{k = 0}^n S_{n, \tau}...S_{k+1, \tau}{\eta}_2^k(x)\right\|_{L^2}.
	\end{equation}
	The improved estimates rely on the refined analysis of the terms involving $\eta_2^k$ in \eqref{eq:uniform:s1}. To this aim, we introduce the following approximation of $\eta_2^k$ to focus on the most relevant terms,
	\be\label{eq:eta2t}
	\tilde{\eta}_2^k(x) = \int_0^\tau \tilde{f}_2^k(s)ds - \tau \tilde{f}_2^k(0), \quad k = 0, 1, ..., n,
	\ee
	with
	\be
	\tilde{f}_2^k(s) = -ie^{i(2s-\tau)/\eps^2}\Pi_+^\eps\left(V(t_k)\Pi_-^\eps \Phi(t_k)\right) -ie^{i(\tau-2s)/\eps^2}\Pi_-^\eps\left(V(t_k)\Pi_+^\eps \Phi(t_k)\right),
	\ee
	then it is easy to verify that (using Taylor expansion $e^{i\tau\mathcal{D}^\eps}=Id+O(\tau\mathcal{D}^\eps)$)
	\be
	\|\eta_2^k(x) - \tilde{\eta}_2^k(x)\|_{L^2} \lesssim \tau^2 \|V(t_k)\|_{H^2}\|\Phi(t_k)\|_{H^2}\lesssim\tau^2.
	\ee
	As a result, from \eqref{eq:uniform:s1}, we have
	\begin{align*}
		\|{\bf e}^{n+1}(x)\|_{L^2} \lesssim&\tau+\left\|\sum_{k = 0}^nS_{n, \tau}...S_{k+1, \tau}(\eta_2^k(x) - \tilde{\eta}_2^k(x))\right\|_{L^2}+\left\|\sum_{k = 0}^n S_{n, \tau}...S_{k+1, \tau}\tilde{\eta}_2^k(x)\right\|_{L^2}\\
		\leq & \tau + \sum_{k=0}^n\|\eta_2^k(x) - \tilde{\eta}_2^k(x)\|_{L^2} + \left\|\sum_{k = 0}^n S_{n, \tau}...S_{k+1, \tau}\tilde{\eta}_2^k(x)\right\|_{L^2}\\
		\lesssim &\tau + \left\|\sum_{k = 0}^n S_{n, \tau}...S_{k+1, \tau}\tilde{\eta}_2^k(x)\right\|_{L^2}.
	\end{align*}
	
	\textbf{Step 2} (Representation of the error using the exact solution flow).
	Denote $S_e(t; t_k)$ ($k = 0, 1, ..., n$) to be the exact solution operator of the Dirac equation, acting on some $\tilde{\Phi}(x) = (\tilde{\phi}_1(x), \tilde{\phi}_2(x))^T\in\mathbb{C}^2$ so that $S_e(t; t_k)\tilde{\Phi}(x)$ is the exact solution $\Psi(t, x)$ at time $t$ of
	\begin{equation}\label{eq:exact}
	\left\{
	\begin{aligned}
	& i\partial_t\Psi(t, x) = \frac{\mathcal{T}^\eps}{\eps^2}\Psi(t, x) + V(t, x)\Psi(t, x),\\
	& \Psi(t_k, x) = \tilde{\Phi}(x).
	\end{aligned}\right.
	\end{equation}
	and the following properties hold true for $t\ge t_k$ , $m\ge1$ and some generic constant $C>0$
	\be\label{eq:exaint}
	\left\|S_e(t;t_k)\widetilde{\Phi}\right\|_{L^2}=\left\|\widetilde{\Phi}\right\|_{L^2},\;
	\left\|S_e(t;t_k)\widetilde{\Phi}\right\|_{H^m}
	\leq e^{C(t-t_k)\|V(t,x)\|_{L^\infty([0,T];W^{m,\infty})}}\|\widetilde{\Phi}\|_{H^m}.
	\ee
	It is convenient to write $\tilde{\eta}_2^k(x)$ \eqref{eq:eta2t} as
	\begin{align}\label{eq:S1_eta2}
	\tilde{\eta}_2^k(x)
	=& p_+(\tau)\Pi_+^\eps\left(V(t_k)\Pi_-^\eps\Phi(t_k)\right) +p_-(\tau)\Pi_-^\eps\left(V(t_k)\Pi_+^\eps\Phi(t_k)\right),
	\end{align}
	with $p_\pm(\tau) = -ie^{\mp i\tau/\eps^2}\left(\int_0^\tau e^{\pm i2s/\eps^2}ds - \tau\right)$
	and by the inequality
	$\left|\int_0^\tau e^{i2s/\eps^2}ds - \tau\right| + \left|\int_0^\tau e^{-i2s/\eps^2}ds - \tau\right|\leq 4\tau$ and similar computations in \eqref{eq:pi+eps}-\eqref{eq:pi-eps}, it follows that
	\be\label{eq:smalle}
	\|\tilde{\eta}_2^k\|_{H^2}\lesssim \tau\eps\|V(t_k)\|_{W^{3,\infty}}\|\Phi(t_k)\|_{H^3}\lesssim
	\eps\tau.
	\ee
	Recalling the error bounds in Theorem \ref{thm:lie} and Remark \ref{remark:err}, we have
	\begin{align*}
	\|(S_{n, \tau}...S_{k+1, \tau} - S_e(t_{n+1}; t_{k+1}))\tilde{\eta}_2^k(x)\|_{L^2}&\lesssim \left(\tau+\frac{\tau}{\eps}\right)\|\tilde{\eta}_2^k\|_{H^2}\lesssim\tau^2,
	\end{align*}
	and
	\begin{align}
	\|{\bf e}^{n+1}(x)\|_{L^2\nonumber}
	&\lesssim  \tau + \sum_{k=0}^n\left\|(S_{n, \tau}...S_{k+1, \tau} - S_e(t_{n+1}; t_{k+1}))\tilde{\eta}_2^k(x)\right\|_{L^2}\nonumber \\
	&\qquad+ \left\|\sum_{k = 0}^n S_e(t_{n+1}; t_{k+1})\tilde{\eta}_2^k(x)\right\|_{L^2}\nonumber\\
	&\lesssim  \tau + \left\|\sum_{k = 0}^n S_e(t_{n+1}; t_{k+1})\tilde{\eta}_2^k(x)\right\|_{L^2}.\label{eq:error:lie}
	\end{align}
	Noticing \eqref{eq:S1_eta2}, we have
	\begin{align}
	\label{decomp}
	S_e(t_{n+1}; t_{k+1})\tilde{\eta}_2^k(x)&=  p_+(\tau)S_e(t_{n+1}; t_{k+1})\Pi_+^\eps V(t_k)\Pi_-^\eps S_e(t_k; t_0)\Phi(0)\nonumber\\
	&\quad + p_-(\tau)S_e(t_{n+1}; t_{k+1})\Pi_-^\eps V(t_k)\Pi_+^\eps S_e(t_k; t_0)\Phi(0),
	\end{align}
	and it remains to estimate $S_e$ part in \eqref{eq:error:lie}.
	
	\textbf{Step 3} (Improved error bounds for non-resonant time steps).
	From \cite{BCJT}, we know that the exact solution of Dirac equation is structured as follows
	\begin{equation}\label{eq:split_Pauli}
	S_e(t_{n}; t_{k})\tilde{\Phi}(x) = e^{-i(t_{n} - t_{k})/\eps^2}\Psi_+(t, x) + e^{i(t_{n} - t_{k})/ \eps^2}\Psi_-(t, x) + R_k^n\tilde{\Phi}(x),
	\end{equation}
	where $R_k^n:(H^2)^2\to(L^2)^2$ is the residue operator and $\|R_k^n\tilde{\Phi}(x)\|_{L^2}\lesssim \eps^2\|\tilde{\Phi}(x)\|_{H^2}$ ($0\leq k\leq n$), and
	\begin{equation}
	\left\{
	\begin{aligned}
	i\partial_t\Psi_{\pm}(t, x) & = \pm\mathcal{D}^\eps\Psi_{\pm}(t, x) + \Pi_{\pm}^\eps(V(t)\Psi_{\pm}(t, x)),\\
	\Psi_{\pm}(t_k, x) & = \Pi_{\pm}^\eps\tilde{\Phi}(x).
	\end{aligned}
	\right.
	\end{equation}
	Denote $S_e^+(t; t_k)\tilde{\Phi}(x)=\Psi_+(t,x)$, $S_e^-(t; t_k)\tilde{\Phi}(x)=\Psi_-(t,x)$ to be the solution propagator of the above equation for $\Psi_+(t, x)$, $\Psi_-(t, x)$, respectively, and $S_e^\pm$ share the same properties in \eqref{eq:exaint}. Plugging \eqref{eq:split_Pauli} into \eqref{decomp}, we derive
	\begin{align*}
	&\sum_{k = 0}^nS_e(t_{n+1}; t_{k+1})\tilde{\eta}_2^k(x)\\
	= & \sum_{k = 0}^n\sum_{\sigma=\pm}\left(e^{-i\frac{t_{n+1} - t_{k+1}}{\eps^2}}S_e^+(t_{n+1}; t_{k+1}) + e^{i\frac{t_{n+1} - t_{k+1}}{\eps^2}}S_e^-(t_{n+1}; t_{k+1}) + R_{k+1}^{n+1}\right)\\
	&\quad \Pi_\sigma^\eps V(t_k)\Pi_{\sigma^*}^\eps\left(e^{-i\frac{t_{k} - t_{0}}{\eps^2}}S_e^+(t_{k}; t_0) + e^{i\frac{t_{k} - t_{0}}{\eps^2}}S_e^-(t_{k}; t_0) + R_0^k\right)\Phi(0) p_\sigma(\tau) \\
	= &   \underbrace{\sum_{k = 0}^n\sum_{\sigma=\pm} e^{-i\sigma\frac{t_{n+1} - t_{k+1}}{\eps^2}}S_e^{\sigma}(t_{n+1}; t_{k+1})\Pi_{\sigma}^{\eps} V(t_k)\Pi_{\sigma^*}^{\eps} e^{i\sigma\frac{t_{k} - t_{0}}{\eps^2}} S_e^{\sigma^*}(t_{k}; t_0)\Phi(0)p_\sigma(\tau)}_{I_1^n(x)}  \\
	& +  \underbrace{ \sum_{k=0}^n\sum_{\sigma=\pm}\left(R_{k+1}^{n+1}\Pi_{\sigma}^\eps V(t_k)\Pi_{\sigma*}^{\eps} \Phi(t_k)+S_e(t_{n+1};t_{k+1})\Pi_{\sigma}^\eps V(t_k)\Pi_{\sigma*}^{\eps}R_0^k\Phi(0)\right)p_\sigma(\tau)}_{I_2^n(x)}
	\\
	= & I_1^n(x) + I_2^n(x),
	\end{align*}
	where $\sigma^*=+$ if $\sigma=-$ and $\sigma^*=-$ if $\sigma=+$.
	As $|p_\pm(\tau)|=\left|\int_0^\tau e^{\pm2is/\eps^2}ds - \tau\right|\lesssim\tau^2/\eps^2$ by Taylor expansion, we have
	\begin{align*}
	\|I_2^n(x)\|_{L^2}\lesssim \frac{\tau^2}{\eps^2}\sum_{k = 0}^n \left(\eps^2\|V(t_k)\|_{W^{2,\infty}}\|\Phi(t_k)\|_{H^2}+\eps^2\|V(t_k)\|_{L^{\infty}}\|\Phi(t_0)\|_{H^2} \right)\lesssim\tau.
	\end{align*}
	We can rewrite $I_1^n(x)$ as
	\begin{align*}
	I_1^n(x) = &\sum_{k = 0}^n\sum_{\sigma=\pm} e^{-i\sigma\frac{t_{n+1} -2 t_{k}-\tau}{\eps^2}}S_e^{\sigma}(t_{n+1}; t_{k+1})\Pi_{\sigma}^{\eps} V(t_k)\Pi_{\sigma^*}^{\eps} S_e^{\sigma^*}(t_{k}; t_0)\Phi(0)p_\sigma(\tau),\\
	= &p_+(\tau)\sum_{k = 0}^n (\theta_k - \theta_{k - 1})S_e^+(t_{n+1}; t_{k+1})\Pi_+^\eps V(t_k)\Pi_-^\eps S_e^-(t_k; t_0)\Phi(0)\\
	& + p_-(\tau)\sum_{k = 0}^n \overline{(\theta_k - \theta_{k - 1})}S_e^-(t_{n+1}; t_{k+1})\Pi_-^\eps V(t_k)\Pi_+^\eps S_e^+(t_k; t_0)\Phi(0)\\
	= & \gamma_1^n(x) + \gamma_2^n(x),
	\end{align*}
	where $\bar{\theta}$ is the complex conjugate of $\theta$ and for $\quad 0\leq k\leq n$,
	\begin{align}
	&\theta_k = \sum_{l = 0}^ke^{-i(t_{n+1} - 2t_l - \tau) / \eps^2}=\frac{e^{-in\tau/\eps^2}-e^{-i(n-2k-2)\tau/\eps^2}}{1-e^{2i\tau/\eps^2}}, \quad \theta_{-1} = 0,& \\
	&\gamma_1^n(x) = p_+(\tau)\sum_{k = 0}^n (\theta_k - \theta_{k - 1})S_e^+(t_{n+1}; t_{k+1})\Pi_+^\eps V(t_k)\Pi_-^\eps S_e^-(t_k; t_0)\Phi(0),\label{eq:gm1n}\\
	&\gamma_2^n(x) = p_-(\tau)\sum_{k = 0}^n \overline{(\theta_k - \theta_{k - 1})}S_e^-(t_{n+1}; t_{k+1})\Pi_-^\eps V(t_k)\Pi_+^\eps S_e^+(t_k; t_0)\Phi(0).
	\end{align}
	It is easy to check that if $\tau\in\mathcal{A}_\delta(\eps)$, it satisfies $|1 - e^{2i\tau/\eps^2}|=2|\sin(\tau/\eps^2)|\geq 2\delta > 0$, then we have
	\begin{equation*}
	|\theta_k| \leq \frac{1}{\delta}, \quad k = 0, 1, ..., n.
	\end{equation*}
	As a result, noticing $|p_{\pm}(\tau)|\leq2\tau$, we can get
	\begin{align*}
	&\|\gamma_1^n(x)\|_{L^2}\\
	&\leq 2\tau\bigg\|\sum_{k = 0}^{n - 1}\theta_k\big[S_e^+(t_{n+1}; t_{k+1})\Pi_+^\eps V(t_k)\Pi_-^\eps S_e^-(t_k; t_0)- S_e^+(t_{n+1}; t_{k+2})\Pi_+^\eps V(t_{k+1})\\
	&\qquad\Pi_-^\eps S_e^-(t_{k+1}; t_0)\big]\Phi(0)\bigg\|_{L^2}+\tau\|\theta_n S_e^+(t_{n+1}; t_{n+1})\Pi_+^\eps V(t_n)\Pi_-^\eps S_e^-(t_n; t_0)\Phi(0)\|_{L^2}\\
	&\lesssim \tau\sum_{k = 0}^{n - 1}\tau/\delta + \tau/\delta \lesssim_\delta \tau,
	\end{align*}
	where we have used the triangle inequality and properties of the solution flows $S_e^{\pm}$ to deduce that (omitted for brevity as they are standard)
	{\small\begin{align*}
		&\bigg\|\big[S_e^+(t_{n+1}; t_{k+1})\Pi_+^\eps V(t_k)\Pi_-^\eps S_e^-(t_k; t_0)- S_e^+(t_{n+1}; t_{k+2})\Pi_+^\eps V(t_{k+1})\Pi_-^\eps S_e^-(t_{k+1}; t_0)\big]\Phi(0)\bigg\|_{L^2}\\
		&\leq\left\|S_e^+(t_{n+1}; t_{k+1})\Pi_+^\eps \left( (V(t_k)-V(t_{k+1}))\Pi_-^\eps S_e^-(t_k; t_0)\right)\Phi(0)\right\|_{L^2}\\
		&\quad+\left\|S_e^+(t_{n+1}; t_{k+1})\Pi_+^\eps \left( V(t_{k+1})\Pi_-^\eps (S_e^-(t_{k};t_0)-S_e^-(t_{k+1};t_0))\right)\Phi(0)\right\|_{L^2}\\
		&\quad +\left\|\left(S_e^+(t_{n+1}; t_{k+1})-S_e^+(t_{n+1}; t_{k+2})\right) \Pi_+^\eps V(t_{k+1})\Pi_-^\eps S_e^-(t_{k+1}; t_0)\Phi(0)\right\|_{L^2}\\
		&\lesssim \tau\left\|\partial_{t}V\right\|_{L^\infty(L^\infty)}\|\Phi(0)\|_{L^2}+\tau\left\|\partial_tS_e^-(t;t_0)\Phi(0)\right\|_{L^\infty([0,T];(L^2)^2)}\\
		&\qquad
		+\tau\left\|\partial_t\left(S_e^+(t_{n+1};t)\Pi_+^\eps V(t_{k+1})\Pi_-^\eps S_e^-(t_{k+1}; t_0)\Phi(0)\right) \right\|_{L^\infty([t_{k+1},t_{n+1}];(L^2)^2)}\\
		&\lesssim\tau+\tau\left\|\Phi(0)\right\|_{H^2}+\tau\|V(t_{k+1})\|_{W^{2,\infty}}
		\|\Phi(0)\|_{H^2}\lesssim \tau.\end{align*}}
	Similarly, we could get $\|\gamma_2^n(x)\|_{L^2}\lesssim_\delta \tau$ and hence $\|I_1^n(x)\|_{L^2}\lesssim_\delta \tau$. In summary, we have
	\begin{equation*}
	\|{\bf e}^{n+1}(x)\|_{L^2} \lesssim \tau + \|I_1^n(x)\|_{L^2} + \|I_2^n(x)\|_{L^2}  \lesssim_\delta \tau,
	\end{equation*}
	which gives the desired results.
\end{proof}

{\bf Proof of Theorem \ref{thm:strang2}}
\begin{proof}
	We divide the proof into two steps.	
	
	\textbf{Step 1} (Representation of the error using the exact solution flow). From Lemma \ref{lemma:strang}, we have for $0\leq n\leq\frac{T}{\tau} - 1$,
	\be\label{eq:err_Strang}
	{\bf e}^{n+1}(x) = e^{-\frac{i\tau}{2\eps^2}\mathcal{T}^\eps}e^{-i\int_{t_n}^{t_{n+1}}V(s, x)ds}e^{-\frac{i\tau}{2\eps^2}\mathcal{T}^\eps}{\bf e}^n(x) + \eta_1^n(x) + \eta_2^n(x) + \eta_3^n(x),
	\ee
	with $\eta_j^n$ ($j=1,2,3$) stated in Lemma \ref{lemma:strang} as
	\begin{align}
	&\|\eta_1^n(x)\|_{L^2}\lesssim\tau^3,\quad \eta_2^n(x) = -ie^{-\frac{i\tau}{\eps^2}\mathcal{T}^\eps}\left(\int_0^\tau f_2^n(s)ds - \tau f_2^n(\tau/2)\right), \\
	&\eta_3^n(x) = -e^{-\frac{i\tau}{\eps^2}\mathcal{T}^\eps}\left(\int_0^\tau\int_0^s \sum_{j = 2}^4g_j^n(s, w)dwds - \frac{\tau^2}{2}\sum_{j = 2}^4g_j^n(\tau/2, \tau/2)\right),
	\end{align}
	where $f_2^n$ and $g_j^n$ ($j=2,3,4$) are given in \eqref{eq:f2ns}-\eqref{eq:g4ns}.
	
	Denote the second order splitting integrator $S_{n, \tau} = e^{-\frac{i\tau}{2\eps^2}\mathcal{T}^\eps}e^{-i\int_{t_n}^{t_{n+1}}V(s)ds}e^{-\frac{i\tau}{2\eps^2}\mathcal{T}^\eps}$ for $n
	\geq 0$, and $S_e(t; t_k)$ to be the exact solution flow \eqref{eq:exact} for the Dirac equation  \eqref{eq:dirac_nomag},
	then $S_{n,\tau}$ enjoys the similar properties as those in the first order Lie-Trotter splitting case \eqref{eq:numint} and we can get
	\begin{align}
	{\bf e}^{n+1}(x) =& S_e(t_{n+1}; t_n){\bf e}^n(x) + \eta_1^n(x) + \eta_2^n(x) + \eta_3^n(x) + \left(S_{n, \tau} - S_e(t_{n+1}; t_n)\right){\bf e}^n(x)\nonumber\\
	=& ...\nonumber\\
	=& S_e(t_{n+1}; t_0){\bf e}^0(x) + \sum_{k=0}^nS_e(t_{n+1}; t_{k+1})( \eta_1^k(x) + \eta_2^k(x) + \eta_3^k(x))\nonumber\\
	& + \sum_{k=0}^nS_e(t_{n+1}; t_{k+1})(S_{k, \tau} - S_e(t_{k+1}; t_k)){\bf e}^k(x).\label{eq:enstrang}
	\end{align}
	By Duhamel's principle, it is straightforward to compute
	\begin{align}\label{eq:lips}
	\left(S_{k, \tau} - S_e(t_{k+1}; t_k)\right)\tilde{\Phi}(x)&=
	e^{-\frac{i\tau}{2\eps^2}\mathcal{T}^\eps}(e^{-i\int_{t_k}^{t_{k+1}}V(s,x)ds}-1)e^{-\frac{i\tau}{2\eps^2}\mathcal{T}^\eps}\nonumber\\&\qquad-i\int_0^\tau e^{-\frac{i(\tau-s) \mathcal{T}^\eps}{\eps^2}}
	V(t_k+s,x)S_e(t_k+s;t_k)\tilde{\Phi}(x)\,ds.
	\end{align}
	Recalling $\|e^{-i\int_{t_k}^{t_{k+1}}V(s,x)ds}-1\|_{L^\infty}\leq \tau\|V(t,x)\|_{L^\infty([t_k,t_{k+1}];L^\infty)}$ and the properties of $S_e(t;t_k)$ \eqref{eq:exaint},
	we obtain from \eqref{eq:lips}
	\begin{align*}
	&\left\|\left(S_{k, \tau} - S_e(t_{k+1}; t_k)\right)\tilde{\Phi}(x)\right\|_{L^2}\\
	&\leq
	\tau\|V(t,x)\|_{L^\infty([t_k,t_{k+1}];L^\infty)}\|\tilde{\Phi}\|_{L^2}
	+\tau\|V(t,x)\|_{L^\infty([t_k,t_{k+1}];L^\infty)}\|\tilde{\Phi}\|_{L^2}
	\lesssim \tau\|\tilde{\Phi}\|_{L^2},
	\end{align*}
	and
	\be\label{eq:stb}
	\|S_e(t_{n+1}; t_{k+1})(S_{k, \tau} - S_e(t_{k+1}; t_k)){\bf e}^k(x)\|_{L^2}\lesssim \tau\|{\bf e}^k(x)\|_{L^2}, \quad k = 0, ..., n.
	\ee
	Noticing $\|{\bf e}^0(x)\|_{L^2} = 0$, combining \eqref{eq:stb} and \eqref{eq:enstrang}, recalling $\|\eta_1^n(x)\|_{L^2}\lesssim\tau^3$, we can control
	\begin{align}
	&\|{\bf e}^{n+1}(x)\|_{L^2}\nonumber\\
	& \leq \sum_{k = 0}^n\|S_e(t_{n+1}; t_{k+1})(S_{k, \tau} - S_e(t_{k+1}; t_k)){\bf e}^k\|_{L^2} + \sum_{j=1}^3\left\|\sum_{k=0}^nS_e(t_{n+1}; t_{k+1})\eta_j^k(x)\right\|_{L^2}\nonumber\\
	&\lesssim \tau^2+\sum_{k = 0}^n\tau\|{\bf e}^k(x)\|_{L^2} +\sum_{j=2}^3 \left\|\sum_{k=0}^nS_e(t_{n+1}; t_{k+1})\eta_j^k(x)\right\|_{L^2}.\label{eq:strangerr1}
	\end{align}
	
	Similar to the Lie-Trotter splitting $S_1$, the key to establish the improved error bounds for non-resonant $\tau$ is to derive refined estimates for the terms involving $\eta_j^k$ ($j=2,3$) in \eqref{eq:strangerr1}. To this purpose, we
	introduce the approximations $\tilde{\eta}_l^k(x)$  of $\eta_l^k(x)$ ($l=2,3$, $k=0,1,\ldots,n$) as
	\begin{equation*}
	\tilde{\eta}_2^k(x) = \int_0^\tau \tilde{f}_2^k(s)ds - \tau \tilde{f}_2^k(\frac{\tau}{2}), \; \tilde{\eta}_3^k(x) = \int_0^\tau\int_0^s \sum_{j = 2}^4\tilde{g}_j^k(s, w)dwds - \frac{\tau^2}{2}\sum_{j = 2}^4\tilde{g}_j^k(\frac{\tau}{2}, \frac{\tau}{2}),
	\end{equation*}
	where we expand $V(t_k+s,x)=V(t_k,x)+s\partial_tV(t_k,x)+O(s^2)$  and $e^{is\mathcal{D}^\eps}=Id+is\mathcal{D}^\eps+O(s^2)$ up to the linear term in $f_2^k(s)$ \eqref{eq:f2ns} and the zeroth order term in $g_j^k(s,w)$ ($j=2,3,4$) \eqref{eq:g2ns}-\eqref{eq:g4ns}, respectively,
	\begin{align*}
	\tilde{f}_2^k(s)&=e^{\frac{i(2s-\tau)}{\eps^2}}\left((s-\tau)\mathcal{D}^\eps\Pi_+^\eps(V(t_k)\Pi_-^\eps\Phi(t_k)) +s \Pi_+^\eps(V(t_k)\mathcal{D}^\eps\Pi_-^\eps\Phi(t_k))\right)\\
	&\quad - e^{\frac{i(\tau-2s)}{\eps^2}}\left((s-\tau)\mathcal{D}^\eps\Pi_-^\eps(V(t_k)\Pi_+^\eps\Phi(t_k)) + s\Pi_-^\eps(V(t_k)\mathcal{D}^\eps\Pi_+^\eps\Phi(t_k))\right)\\
	&\quad-ise^{\frac{i(2s-\tau)}{\eps^2}}\Pi_+^\eps(\partial_tV(t_k)\Pi_-^\eps\Phi(t_k))-ise^{\frac{i(\tau-2s)}{\eps^2}} \Pi_-^\eps(\partial_tV(t_k)\Pi_+^\eps\Phi(t_k))
	\\
	& \quad-ie^{\frac{i(2s-\tau)}{\eps^2}}\Pi_+^\eps(V(t_k)\Pi_-^\eps\Phi(t_k)) -i e^{\frac{i(\tau-2s)}{\eps^2}}\Pi_-^\eps(V(t_k)\Pi_+^\eps\Phi(t_k)),\\
	\tilde{g}_2^k(s, w) &=-i e^{\frac{i(2w-\tau)}{\eps^2}}\Pi_+^\eps\left(V(t_k)\Pi_+^\eps\left(V(t_k)\Pi_-^\eps\Phi(t_k)\right)\right) \\
	&\quad-i e^{\frac{i(\tau-2w)}{\eps^2}}\Pi_-^\eps\left(V(t_k)\Pi_-^\eps\left(V(t_k)\Pi_+^\eps\Phi(t_k)\right)\right),\\
	\tilde{g}_3^k(s, w) &= -ie^{\frac{i(2(s-w)-\tau)}{\eps^2}}\Pi_+^\eps\left(V(t_k)\Pi_-^\eps\left(V(t_k)\Pi_+^\eps\Phi(t_k)\right)\right) \\
	&\quad-i e^{\frac{i(\tau-2(s-w))}{\eps^2}}\Pi_-^\eps\left(V(t_k)\Pi_+^\eps\left(V(t_k)\Pi_-^\eps\Phi(t_k)\right)\right),\\
	\tilde{g}_4^k(s, w)& = -ie^{\frac{i(2s-\tau)}{\eps^2}}\Pi_+^\eps\left(V(t_k)\Pi_-^\eps\left(V(t_k)\Pi_-^\eps\Phi(t_k)\right)\right)\\
	&\quad -ie^{\frac{i(\tau-2s)}{\eps^2}}\Pi_-^\eps\left(V(t_k)\Pi_+^\eps\left(V(t_k)\Pi_+^\eps\Phi(t_k)\right)\right).
	\end{align*}
	Using Taylor expansion in $f_2^k(s)$ \eqref{eq:f2ns} and $g_j^k(s,w)$ ($j=2,3,4$) \eqref{eq:g2ns}-\eqref{eq:g4ns} as well as properties of $\mathcal{D}^\eps$, it is not difficult to check that
	\begin{align*}
	&\|\eta_2^k(x)-\tilde{\eta}_2^k(x)\|_{L^2}\\
	&\lesssim\tau^3\bigg( \|V(t,x)\|_{W^{2,\infty}([0,T];L^\infty)} \|\Phi(t_k)\|_{L^2}
	+\|\partial_tV(t,x)\|_{W^{1,\infty}([0,T];H^2)}\|\|\Phi(t_k)\|_{H^2}\\\
	&\qquad+
	\|V(t,x)\|_{L^{\infty}([0,T];H^4)}\|\Phi(t_k)\|_{H^4}\bigg)\lesssim\tau^3,\\
	&\|\eta_3^k(x)-\tilde{\eta}_3^k(x)\|_{L^2}\lesssim\tau^3\|V(t_n,x)\|_{W^{2,\infty}}^2\|\Phi(t_k)\|_{H^2}\lesssim \tau^3,\end{align*}
	which would yield for $k\leq n\leq\frac{T}{\tau}-1$,
	\begin{align}
	\left\|S_e(t_{n+1}; t_{k+1}){\eta}_2^k(x) - S_e(t_{n+1}; t_{k+1})\tilde{\eta}_2^k(x)\right\|_{L^2}&\lesssim\|\eta_2^k(x)-\tilde{\eta}_2^k(x)\|_{L^2}\lesssim \tau^3,\label{eq:31}\\ \left\|S_e(t_{n+1}; t_{k+1})\eta_3^n(x) - S_e(t_{n+1}; t_{k+1})\tilde{\eta}_3^k(x)\right\|_{L^2}&\lesssim
	\|\eta_3^k(x)-\tilde{\eta}_3^k(x)\|_{L^2}\lesssim \tau^3.\label{eq:32}
	\end{align}
	Plugging the above inequalities \eqref{eq:31}-\eqref{eq:32} into \eqref{eq:strangerr1}, we derive
	\begin{align}
	\nonumber\|{\bf e}^{n+1}(x)\|_{L^2} &\lesssim \tau^2+\sum_{k=0}^n\tau^3 + \sum_{j=2}^3\left\|\sum_{k=0}^nS_e(t_{n+1}; t_{k+1})\tilde{\eta}_j^k(x)\right\|_{L^2} + \sum_{k=0}^n\tau\|{\bf e}^k(x)\|_{L^2}\\
	&\lesssim \tau^2+ \sum_{j=2}^3\left\|\sum_{k=0}^nS_e(t_{n+1}; t_{k+1})\tilde{\eta}_j^k(x)\right\|_{L^2} + \sum_{k=0}^n\tau\|{\bf e}^k(x)\|_{L^2}.\label{eq:err_Strang_split}
	\end{align}
	
	\textbf{Step 2} (Improved estimates for non-resonant time steps).
	It remains to show the estimates on the  terms related to $\tilde{\eta}_2^k$ and $\tilde{\eta}_3^k$. The arguments will be similar to  those in the proof of the Lie-Trotter splitting case Theorem \ref{thm:lie2}, so we only sketch the proof below. Taking $\tilde{\eta}_2^k$ for example, we write
	\be\label{eq:etasplit}
	\tilde{\eta}_{2}^k(s) = \tilde{\eta}_{2+}^k(s) + \tilde{\eta}_{2-}^k(s),\quad
	\tilde{\eta}_{2\pm}^k(x) = \int_0^\tau \tilde{f}_{2\pm}^k(s)ds - \tau \tilde{f}_{2\pm}^k(\tau/2),  \quad k = 0, 1, ..., n,
	\ee
	with
	\begin{align*}
	\tilde{f}_{2\pm}^k(s) = &e^{\pm i(2s-\tau)/\eps^2}\left(\pm(s-\tau)\mathcal{D}^\eps\Pi_{\pm}^\eps(V(t_k)\Pi_{\mp}^\eps\Phi(t_k))  \pm s\Pi_{\pm}^\eps(V(t_k)\mathcal{D}^\eps\Pi_{\mp}^\eps\Phi(t_k))\right)\\
	& -i se^{\pm i(2s-\tau)/\eps^2}\ \Pi_{\pm}^\eps(\partial_tV(t_k)\Pi_{\mp}^\eps\Phi(t_k))-ie^{\pm i(2s-\tau)/\eps^2}\Pi_{\pm}^\eps(V(t_k)\Pi_{\mp}^\eps\Phi(t_k))\end{align*}
	and $\tilde{f}_{2}^k(s) = \tilde{f}_{2+}^n(s) + \tilde{f}_{2-}^n(s)$.

	Recalling the structure of the exact solution to the Dirac equation in \eqref{eq:split_Pauli}, we have for $0\leq k\leq n$
	\begin{equation*}
	S_e(t_{n}; t_{k})\tilde{\Phi}(x) = e^{-i(t_{n} - t_{k})/\eps^2}S_e^+(t_n; t_k)\tilde{\Phi}( x) + e^{i(t_{n} - t_{k})/ \eps^2}S_e^-(t_n; t_k)\tilde{\Phi}(x) + R_k^n\tilde{\Phi}(x),
	\end{equation*}
	where the propagators $S_e^{\pm}$ and the residue operator $R_k^n:(L^2)^2\to (L^2)^2$ are defined in \eqref{eq:split_Pauli}. Therefore, we can get
	\begin{align*}
	\sum_{k=0}^nS_e(t_{n+1}; t_{k+1})\tilde{\eta}_{2+}^k(x) = \sum_{j = 1}^4\tilde{I}_j^n(x),
	\end{align*}
	with
	\begin{align*}
	\tilde{I}_1^n(x) &= \tilde{p}_1(\tau)\sum_{k=0}^ne^{-\frac{i(t_{n+1} - 2t_k - \tau)}{\eps^2}}S_e^+(t_{n+1}; t_{k+1})\Pi_+^\eps V(t_k)\Pi_-^\eps S_e^-(t_k; t_0)\Phi(0),\\
	\tilde{I}_2^n(x) &= \tilde{p}_1(\tau)\sum_{k = 0}^n(R_{k+1}^{n+1}\Pi_+^\eps V(t_k)\Pi_-^\eps\Phi(t_k) + S_e(t_{n+1}; t_{k+1})\Pi_+^\eps V(t_k)\Pi_-^\eps R_0^k\Phi(0)),\\
	\tilde{I}_3^n(x) &= \sum_{k=0}^ne^{-\frac{i(t_{n+1} - 2t_k - \tau)}{\eps^2}}S_e^+(t_{n+1}; t_{k+1})\bigg(\tilde{p}_2(\tau)\mathcal{D}^\eps\Pi_+^\eps V(t_k)\\
	&\hskip2cm+ \tilde{p}_3(\tau)\Pi_+^\eps V(t_k)\mathcal{D}^\eps -i\tilde{p}_3(\tau) \Pi_+^\eps \partial_tV(t_k)\bigg)\Pi_-^\eps S_e^-(t_k; t_0)\Phi(0),\\
	\tilde{I}_4^n(x) &=\sum_{k=0}^n\bigg(R_{k+1}^{n+1}\left(\tilde{p}_2(\tau)\mathcal{D}^\eps\Pi_+^\eps V(t_k) + \tilde{p}_3(\tau)\left(\Pi_+^\eps V(t_k)\mathcal{D}^\eps -i\Pi_+^\eps \partial_tV(t_k)\right)\right)\Pi_-^\eps\Phi(t_k)\\
	&\qquad+ S_e(t_{n+1}; t_{k+1})\left(\tilde{p}_2(\tau)\mathcal{D}^\eps\Pi_+^\eps V(t_k) +\tilde{p}_3(\tau)\left(\Pi_+^\eps V(t_k)\mathcal{D}^\eps -i\Pi_+^\eps \partial_tV(t_k)\right)\right)
	\\ &\hskip2cm\Pi_-^\eps R_0^k\Phi(0)\bigg),
	\end{align*}
	where
	\begin{align*}
	&\tilde{p}_1(\tau)=-i\left(\int_0^\tau e^{i(2s-\tau)/\eps^2}ds - \tau \right),\quad
	\tilde{p}_2(\tau)=\left(\int_0^\tau (s-\tau)e^{i(2s-\tau)/\eps^2}ds +\frac{\tau^2}{2} \right),\\
	&\tilde{p}_3(\tau)=\left(\int_0^\tau se^{i(2s-\tau)/\eps^2}ds - \frac{\tau^2}{2}\right).
	\end{align*}
	The residue terms $\tilde{I}_2^n$ and $\tilde{I}_4^n$ will be estimated first.  Using the properties of $R_k^n$ and $S_e$, noticing \eqref{eq:pi+eps}-\eqref{eq:pi-eps}, we have
	\begin{align*}
	&\|R_{k+1}^{n+1}\Pi_+^\eps V(t_k)\Pi_-^\eps\Phi(t_k) + S_e(t_{n+1}; t_{k+1})\Pi_+^\eps V(t_k)\Pi_-^\eps R_0^k\Phi(0)\|_{L^2} \\
	&\lesssim \eps^3\|V(t_k)\|_{W^{3,\infty}}\left(\|\Phi(t_k)\|_{H^3}+\|\Phi(0)\|_{H^3}\right),\\
	&\|R_{k+1}^{n+1}\mathcal{D}^\eps\Pi_+^\eps V(t_k) \Pi_-^\eps\Phi(t_k)\|_{L^2}+\| R_{k+1}^{n+1} (\Pi_+^\eps V(t_k)\mathcal{D}^\eps -i \Pi_+^\eps \partial_tV(t_k))\Pi_-^\eps\Phi(t_k)\|_{L^2}\\
	&\lesssim \eps^3\|V(t,x)\|_{W^{1,\infty}([0,T];W^{5,\infty})}\|\Phi(t_k)\|_{H^5},\\
	&\|S_e(t_{n+1}; t_{k+1})\mathcal{D}^\eps\Pi_+^\eps V(t_k)\Pi_-^\eps R_0^k\Phi(0)\|_{L^2} \lesssim \eps^3\|V(t,x)\|_{W^{1,\infty}([0,T];W^{3,\infty})}\|\Phi(0)\|_{H^5},\\
 &\|S_e(t_{n+1}; t_{k+1})(\Pi_+^\eps V(t_k)\mathcal{D}^\eps -i \Pi_+^\eps \partial_tV(t_k))\Pi_-^\eps R_0^k\Phi(0)\|_{L^2} \\
	&\lesssim \eps^3\|V(t,x)\|_{W^{1,\infty}([0,T];W^{3,\infty})}\|\Phi(0)\|_{H^5},
	\end{align*}
	which will lead to the following conclusions in view of the fact that
	$|\tilde{p}_1(\tau)|=|\int_0^\tau e^{i(2s-\tau)/\eps^2}ds - \tau | \lesssim \min\{\tau^2 / \eps^2, \tau^3 / \eps^4\}$ and $|\tilde{p}_2(\tau)|,|\tilde{p}_3(\tau)|\lesssim \min\{\tau^2 / \eps^2, \tau^3 / \eps^4\}$ (Taylor expansion up to the linear or the quadratic term),
	\be
	\|\tilde{I}_2^n(x)\|_{L^2} \lesssim\min\{\tau\eps, \tau^2/\eps\}, \quad \|\tilde{I}_4^n(x)\|_{L^2} \lesssim\min\{\tau\eps, \tau^2/\eps\}.
	\ee
	Now, we proceed to treat $\tilde{I}_1^n$ and $\tilde{I}_3^n$.
	For $\tilde{I}_1^n(x)$, it is similar to \eqref{eq:gm1n} which has been analyzed in the $S_1$ case. Using the same idea (details omitted for brevity here), and the fact that  $|\tilde{p}_1(\tau)|=\left|\int_0^\tau e^{i(2s-\tau)/\eps^2}ds - \tau \right| \lesssim \min\{\tau, \tau^2 / \eps^2\}$ as well as $\Pi_{\pm}^\eps V(t_k)\Pi_{\mp}^\eps=O(\eps)$, under the regularity assumptions,
	we can get for $\tau\in\mathcal{A}_\delta(\eps)$,
	\be
	\|\tilde{I}_1^n(x)\|_{L^2} \lesssim \min\{\tau, \tau^2 / \eps^2\}(\sum_{k=0}^{n-1}\tau\eps/\delta + \eps / \delta) \lesssim_\delta \min\{\tau\eps, \tau^2 / \eps\}.
	\ee
	Similarly, noticing $|\tilde{p}_2(\tau)|,|\tilde{p}_3(\tau)|\leq \tau^2$,
	we can get
	\be
	\|\tilde{I}_3^n(x)\|_{L^2} \lesssim \tau^2(\sum_{k=0}^{n-1}\tau\eps/\delta + \eps / \delta) \lesssim_\delta \tau^2\eps.
	\ee
	Combing the estimates for $\tilde{I}_j^n$ ($j=1,2,3,4$), we have
	\be\label{eq:eta+}
	\left\|\sum_{k=0}^nS_e(t_{n+1}; t_{k+1})\tilde{\eta}_{2+}^k(x)\right\|_{L^2} \leq \sum_{j = 1}^4\|\tilde{I}_j^n(x)\|_{L^2} \lesssim_\delta \min\{\tau\eps, \tau^2/\eps\}.
	\ee
	For $\sum_{k=0}^nS_e(t_{n+1}; t_{k+1})\tilde{\eta}_{2-}^k(x)$, we can have the same results as
	\be
	\left\|\sum_{k=0}^nS_e(t_{n+1}; t_{k+1})\tilde{\eta}_{2-}^k(x)\right\|_{L^2} \lesssim_\delta \min\{\tau\eps, \tau^2/\eps \},
	\ee
	which yield the following results in view of \eqref{eq:eta+} and \eqref{eq:etasplit}
	\be
	\left\|\sum_{k=0}^nS_e(t_{n+1}; t_{k+1})\tilde{\eta}_{2}^k(x)\right\|_{L^2} \lesssim_\delta \min\{\tau\eps, \tau^2/\eps\}.
	\ee
	The same technique works for $S_e(t_{n+1}; t_{k+1})\tilde{\eta}_{3}^k(x)$ and we can get
	\be
	\left\|\sum_{k=0}^nS_e(t_{n+1}; t_{k+1})\tilde{\eta}_{3}^k(x)\right\|_{L^2} \lesssim_\delta \min\{\tau\eps, \tau^2/\eps \}.
	\ee
	Plugging these results into \eqref{eq:err_Strang_split}, we have
	\be
	\|{\bf e}^{n+1}(x)\|_{L^2} \lesssim_\delta \tau^2 + \sum_{k=0}^n\tau\|{\bf e}^k(x)\|_{L^2} + \min\{\tau\eps, \tau^2 / \eps\}.
	\ee
	Gronwall's inequality then implies for  $\tau$ satisfying $\tau\in\mathcal{A}_\delta(\eps)$,
	\be
	\|{\bf e}^{n+1}(x)\|_{L^2} \lesssim_\delta \tau^2 + \min\{\tau\eps, \tau^2/\eps\},\quad 0\leq n\leq\frac{T}{\tau}-1.
	\ee
	This completes the proof for Theorem \ref{thm:strang2}.
\end{proof}

\section{Numerical results}
In this section, we report three numerical examples to verify our theorems. For spatial discretization, we use Fourier pseudospectral method.

In the first two 1D examples, we choose the electric potential in \eqref{eq:dirac_nomag} as
\be
V(t, x) = \frac{1 - x}{1 + x^2}, \quad x\in\mathbb{R}, \quad t \geq 0,
\ee
and the initial data in \eqref{eq:dirac_initial} for the first two examples in 1D as
\be
\phi_1(0, x) =  e^{-\frac{x^2}{2}}, \quad \phi_2(0, x) = e^{-\frac{(x - 1)^2}{2}}, \quad x\in\mathbb{R}.
\ee
In the last example, which is a 2D problem, we choose the electric potential in \eqref{eq:dirac4_nomag} as the honey-comb lattice potential with $\bx = (x_1, x_2)^T\in\mathbb{R}^2$
\be
V(t, \bx) = \cos(-\frac{4\pi}{\sqrt{3}}x_1) + \cos(\frac{2\pi}{\sqrt{3}}x_1 + 2\pi x_2) +  \cos(\frac{2\pi}{\sqrt{3}}x_1 - 2\pi x_2),
\ee
and the initial data in \eqref{eq:initial} are chosen as
\begin{align}
&\psi_1(0, \bx) =  e^{-\frac{x_1^2+x_2^2}{2}}, \quad \psi_2(0, \bx) = e^{-\frac{(x_1 - 1)^2+x_2^2}{2}},\\
&\psi_3(0, \bx) =  e^{-\frac{(x_1+1)^2 + (x_2+1)^2}{2}}, \quad \psi_4(0, \bx) = e^{-\frac{x_1^2 + (x_2-1)^2}{2}}.
\end{align}

In the 1D numerical simulations, as a common practice, we truncate the whole space onto a sufficiently large bounded domain $\Omega = (a, b)$, and assume periodic boundary conditions. The mesh size is chosen as $h := \triangle x = \frac{b - a}{M}$ with $M$ being an even positive integer. Then the grid points can be denoted as $x_j := a + jh$, for $j = 0, 1, ..., M$.

To show the numerical results, we introduce the discrete $l^2$ errors of the numerical solution. Let $\Phi^n = (\Phi_0^n, \Phi_1^n, ..., \Phi_{M-1}^n,\Phi_M^n)^T$ be the numerical solution obtained by a numerical method with time step $\tau$ and  $\eps$ as well as a very fine mesh size
$h$ at time $t = t_n$, and $\Phi(t, x)$ be the exact solution, then the relative discrete $l^2$ error is quantified as
\be
e^{\eps, \tau}(t_n) = \frac{\|\Phi^n - \Phi(t_n, \cdot)\|_{l^2}}{\|\Phi(t_n, \cdot)\|_{l^2}} = \frac{\sqrt{h\sum_{j = 0}^{M - 1}|\Phi_j^n - \Phi(t_n, x_j)|^2}}{\sqrt{h\sum_{j=0}^{M-1}|\Phi(t_n, x_j)|^2}},
\ee
and $e^{\eps, \tau}(t_n)$ should be close to the $L^2$ errors (with normalized probability density of the wave function) in Theorems \ref{thm:lie}, \ref{thm:strang}, \ref{thm:lie2} \& \ref{thm:strang2} for fine spatial mesh sizes $h$.\\
For the 2D example, with similar notations (equal mesh size $h$ and grid points along each direction), the relative discrete $l^2$ error could be defined as
\be
e^{\eps, \tau}(t_n) = \frac{\|\Psi^n - \Psi(t_n, \cdot)\|_{l^2}}{\|\Psi(t_n, \cdot)\|_{l^2}} = \frac{h\sqrt{\sum_{j = 0}^{M^2 - 1}|\Psi_j^n - \Psi(t_n, \bx_j)|^2}}{h\sqrt{\sum_{j=0}^{M^2-1}|\Psi(t_n, \bx_j)|^2}}.
\ee

\noindent\textbf{Example 1} We first test the uniform error bounds for the splitting methods. In this example, we choose resonant time step size, that is, for small enough chosen $\eps$, there is a positive $k_0$, such that $\tau = k_0\eps\pi$.

The bounded computational domain is set as $\Omega = (-32, 32)$.
Because we are only concerned with the temporal errors in this paper, during the computation, the spatial mesh size is always set to be $h = \frac{1}{16}$ so that the spatial error is negligible.
As there is no  exact solution available, for comparison, we use a numerical `exact' solution generated by  the $S_2$ method with a very fine time step size $\tau_e = 2\pi\times10^{-6}$.

Tables \ref{table:LDirac_S1} \& \ref{table:LDirac_S2} show the numerical errors $e^{\eps, \tau}(t = 2\pi)$ with different $\varepsilon$ and time step size $\tau$ for $S_1$ and $S_2$, respectively. \\

\begin{table}[htp]
	\def\temptablewidth{1\textwidth}
	\caption{Discrete $l^2$ temporal errors $e^{\eps, \tau}(t = 2\pi)$ for the wave function with resonant time step size, $S_1$ method. }
	{\rule{\temptablewidth}{1pt}}
	\begin{tabular*}{\temptablewidth}{@{\extracolsep{\fill}}ccccccc}
		$e^{\eps, \tau}(t = 2\pi)$  & $\tau_0 = \pi/4$ & $\tau_0 / 4$ & $\tau_0 / 4^2$ & $\tau_0 / 4^3$
		& $\tau_0 / 4^4$ & $\tau_0 / 4^5$ \\ \hline
		$\eps_0=1$ & 4.84E-1 &	\textbf{1.27E-1} & 3.20E-2 &	8.03E-3 & 2.01E-3 &	5.02E-4\\
		order & -- & \textbf{0.97} &	0.99 &	1.00 &	1.00 &	1.00\\\hline
		$\eps_0/2$ & 6.79E-1 &	1.21E-1 &	\textbf{3.10E-2} &	7.78E-3 &	1.95E-3 &	4.87E-4\\
		order & -- & 1.24 & \textbf{0.98} &	1.00 &	1.00 &	1.00\\\hline
		$\eps_0/2^2$ & 5.78E-1 & 2.71E-1 &	3.07E-2 &	\textbf{7.76E-3} &	1.95E-3 &	4.87E-4\\
		order & -- & 0.55 &	1.57 &	\textbf{0.99} &	1.00 &	1.00\\\hline
		$\eps_0/2^3$ & \textbf{5.33E-1} & 1.85E-1 &	1.21E-1 &	7.75E-3 &	\textbf{1.95E-3} &	4.87E-4\\
		order & \textbf{--} & 0.76 &	0.30 &	1.98 &	\textbf{1.00} &	1.00\\\hline
		$\eps_0/2^4$ & 5.13E-1 & 1.48E-1 &	7.02E-2 &	5.76E-2 &	1.95E-3 &	\textbf{4.88E-4}\\
		order & -- & 0.90 &	0.54 &	0.14 &	2.44  & \textbf{1.00}\\\hline
		$\eps_0/2^5$ & 5.04E-1 &	\textbf{1.34E-1} &	4.70E-2 & 3.07E-2 &	2.82E-2 &	4.88E-4\\
		order & -- & \textbf{0.96} & 0.75 &	0.31 &	0.06 &	2.93\\\hline
		$\eps_0/2^7$ & 4.98E-1 & 1.25E-1 &	\textbf{3.37E-2} &	1.18E-2 &	7.68E-3 &	7.05E-3\\
		order & -- & 1.00 &	\textbf{0.95} &	0.76 &	0.31 &	0.06\\\hline
		$\eps_0/2^9$ & 4.97E-1 & 1.24E-1 &	3.17E-2 &	\textbf{8.46E-3} &	2.95E-3 &	1.92E-3\\
		order & -- & 1.00 &	0.98 &	\textbf{0.95} &	0.76 &	0.31\\\hline
		$\eps_0/2^{11}$ & 4.96E-1 &	1.23E-1 &	3.13E-2 &	7.94E-3 & \textbf{2.12E-3} &	7.37E-4\\
		order & -- & 1.00 &	0.99 &	0.99 &	\textbf{0.95} &	0.76 \\\hline\hline
		$\max\limits_{0<\eps\leq 1}e^{\eps, \tau}(t=2\pi)$ & 6.79E-1 &	2.71E-1 &	1.21E-1 &	5.76E-2 &	2.82E-2 &	1.39E-2\\
		order & -- & 0.66 &	0.58 &	0.54 &	0.52 &	0.51
	\end{tabular*}
	{\rule{\temptablewidth}{1pt}}
	\label{table:LDirac_S1}
\end{table}

\begin{table}[htp]
	\def\temptablewidth{1\textwidth}
	\caption{Discrete $l^2$ temporal errors $e^{\eps, \tau}(t = 2\pi)$ for the wave function with resonant time step size, $S_2$ method. }
	{\rule{\temptablewidth}{1pt}}
	
	\begin{tabular*}{\temptablewidth}{@{\extracolsep{\fill}}ccccccc}
		$e^{\eps, \tau}(t = 2\pi)$	 & $\tau_0 = \pi / 4$ & $\tau_0 / 4^2$ & $\tau_0 / 4^3$ & $\tau_0 / 4^4$
		& $\tau_0 / 4^5$ & $\tau_0 / 4^6$ \\ \hline
		$\eps_0 = 1$ & 8.08E-2 &	\textbf{4.44E-3} &	2.76E-4 &	1.73E-5 &	1.08E-6 &	6.74E-8\\
		order & -- & \textbf{2.09} &	2.00 &	2.00 &	2.00 &	2.00\\\hline
		$\eps_0 / 2$ & 4.13E-1 &	9.66E-3 &	\textbf{5.73E-4}	& 3.57E-5 &	2.23E-6 &	1.39E-7\\
		order & -- & 2.71 &	\textbf{2.04} &	2.00 &	2.00 &	2.00\\\hline
		$\eps_0 / 2^2$ & 2.63E-1 &	2.15E-1 &	1.21E-3 &	\textbf{7.22E-5} &	4.50E-6 &	2.81E-7\\
		order & -- & 0.15 &	3.74 &	\textbf{2.03} &	2.00 &	2.00\\\hline
		$\eps_0 / 2^3$ & \textbf{2.08E-1} &	1.10E-1 &	1.10E-1 &	1.51E-4 &	\textbf{9.05E-6} &	5.64E-7\\
		order & \textbf{--} & 0.46 &	0.00 &	4.75 &	\textbf{2.03} &	2.00\\\hline
		$\eps_0 / 2^4$ & 1.92E-1 &	5.56E-2 &	5.51E-2 &	5.51E-2 &	1.89E-5 &	\textbf{1.13E-6}\\
		order & -- & 0.89 &	0.01 &	0.00 &	5.76 &	\textbf{2.03}\\\hline
		$\eps_0 / 2^5$ & 1.88E-1 &	2.85E-2 &	2.76E-2 &	2.76E-2 &	2.76E-2 &	2.36E-6\\
		order & -- & 1.36 &	0.02 &	0.00 &	0.00 &	6.76\\\hline
		$\eps_0 / 2^6$ & 1.87E-1 &	1.55E-2 &	1.38E-2 &	1.38E-2 &	1.38E-2 &	1.38E-2\\
		order & -- & 1.79 &	0.08 &	0.00 &	0.00 &	0.00\\\hline
		$\eps_0 / 2^7$ & 1.87E-1 &	\textbf{9.86E-3} &	6.92E-3 &	6.90E-3 &	6.90E-3 &	6.90E-3\\
		order & -- & \textbf{2.12} &	0.26 &	0.00 &	0.00 &	0.00\\\hline
		$\eps_0 / 2^{11}$ & 1.87E-1 &	6.97E-3 &	\textbf{5.93E-4} &	4.32E-4 &	4.31E-4 &	4.31E-4\\
		order & -- & 2.37 &	\textbf{1.78} &	0.23 &	0.00 &	0.00\\\hline
		$\eps_0 / 2^{15}$ & 1.87E-1 &	6.95E-3 &	4.03E-4 &	\textbf{3.75E-5} &	2.71E-5 &	2.70E-5\\
		order & -- & 2.37 &	2.05 &	\textbf{1.71} &	0.23 &	0.00\\\hline\hline
		$\max\limits_{0<\eps\leq 1}e^{\eps, \tau}(t = 2\pi)$& 4.13E-1 &	2.15E-1 &	1.10E-1 &	5.51E-2 &	2.76E-2 &	1.38E-2\\
		order & -- & 0.47 &	0.49 &	0.50 &	0.50 &	0.50	
	\end{tabular*}
	{\rule{\temptablewidth}{1pt}}
	\label{table:LDirac_S2}
\end{table}

\begin{figure}[htp]
	\vspace{5pt}
	\caption{Order plot for $S_2$ using resonant time steps.}
	\includegraphics[width=0.6\textwidth]{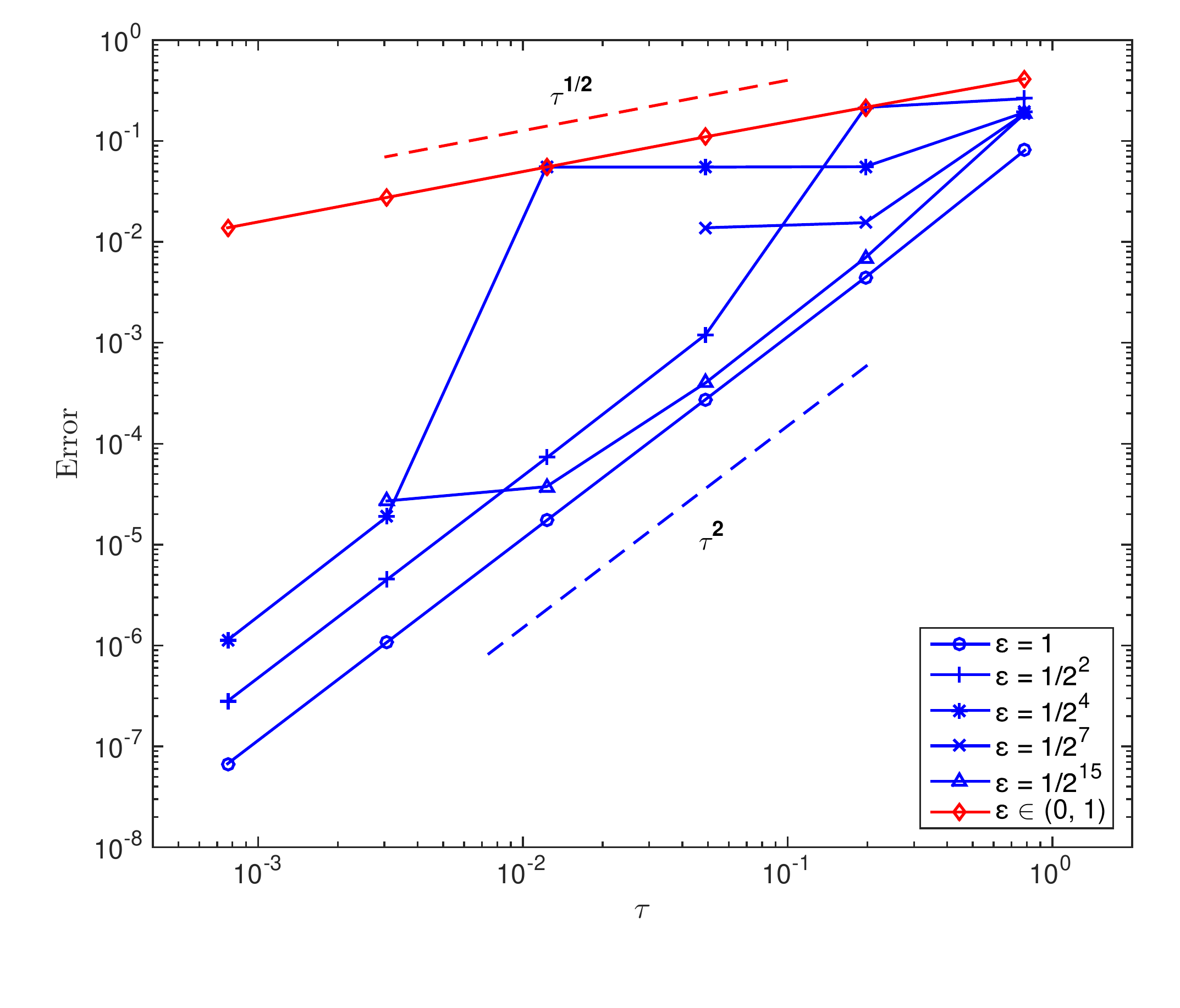}
	\label{fig:S2_res}
\end{figure}

In Tables \ref{table:LDirac_S1} \& \ref{table:LDirac_S2}, the last two rows show the largest error of each column for fixed $\tau$. They both give $1/2$ order of convergence, which coincides well with Theorems \ref{thm:lie} \& \ref{thm:strang}.
More specifically, in Table \ref{table:LDirac_S1}, we can see when $\tau\gtrsim \eps$ (below the lower bolded line), there is first order convergence, which agrees with the error bound $\|\Phi(t_n, x) - \Phi^n(x)\|_{L^2}\lesssim \tau +\eps$. When $\tau\lesssim\eps^2$ (above the upper bolded line), we  observe first order convergence, which matches the other error bound $\|\Phi(t_n, x) - \Phi^n(x)\|_{L^2}\lesssim \tau +\tau/ \eps$. Similarly, in Table \ref{table:LDirac_S2}, the second order convergence can be clearly observed when $\tau\lesssim\eps^2$ (above the upper bolded line) or when $\tau\gtrsim\sqrt{\eps}$ (below the lower bolded line), which fits well with the two error bounds $\|\Phi(t_n,x)-\Phi^n(x)\|_{L^2}\lesssim \tau^2+\tau^2/\eps^3$ and $\|\Phi(t_n,x)-\Phi^n(x)\|_{L^2}\lesssim \tau^2+\eps$.

Moreover, Figure \ref{fig:S2_res} gives the order plot for $S_2$ under resonant time steps. It could be clearly observed that when $\eps$ is relatively large, there is second order convergence for small  time step sizes; and when $\eps$ is relatively small, there is second order convergence for large time step sizes. Overall, there is a $1/2$ order uniform convergence, which corresponds well with Theorem \ref{thm:strang}.

Through the results of this example, we successfully validate the uniform error bounds for the splitting methods in Theorems \ref{thm:lie} \& \ref{thm:strang}.\\

\noindent\textbf{Example 2} In this example, we test the improved uniform error bounds for  non-resonant time step size. Here we choose $\tau\in\mathcal{A}_\delta(\eps)$ for  some given $\eps$ and  $0<\delta \leq1$.

The bounded computational domain is set as $\Omega = (-16, 16)$.
The numerical `exact' solution is computed by  the $S_2$ method with a very small time step $\tau_e = 8\times10^{-6}$. Spatial mesh size is fixed as $h=1/16$ for all the numerical simulations.

Tables \ref{table:LDirac_S1_nrs} \& \ref{table:LDirac_S2_nrs} show the numerical errors $e^{\eps, \tau}(t = 4)$ with different $\varepsilon$ and time step size $\tau$ for $S_1$ and $S_2$, respectively.

\begin{table}[htp]
	\def\temptablewidth{1\textwidth}
	\caption{Discrete $l^2$ temporal errors $e^{\eps, \tau}(t = 4)$ for the wave function with non-resonant time step size, $S_1$ method. }
	{\rule{\temptablewidth}{1pt}}
	
	\begin{tabular*}{\temptablewidth}{@{\extracolsep{\fill}}cccccccc}
		$e^{\eps, \tau}(t = 4)$& $\tau_0 = 1/2$ & $\tau_0 / 2$ & $\tau_0 / 2^2$ & $\tau_0 / 2^3$
		& $\tau_0 / 2^4$ & $\tau_0 / 2^5$ & $\tau_0 / 2^6$\\ \hline
		$\eps_0 = 1$ &	3.51E-1 &	1.78E-1 &	8.96E-2 &	4.50E-2 &	2.25E-2 &	1.13E-2 &	5.64E-3\\
		order & -- & 0.98 &	0.99 &	0.99 &	1.00 &	1.00 &	1.00\\\hline
		$\eps_0 / 2$ &	3.52E-1 &	1.65E-1 &	8.34E-2 &	4.20E-2 &	2.11E-2 &	1.05E-2 &	5.28E-3\\
		order & -- &	1.10 &	0.98 &	0.99 &	1.00 &	1.00 &	1.00\\\hline
		$\eps_0 / 2^2$  &	3.25E-1 &	1.64E-1 &	8.04E-2 &	4.07E-2 &	2.05E-2 &	1.03E-2 &	5.15E-3\\
		order & -- & 0.99 &	1.03 &	0.98 &	0.99 &	1.00 &	1.00\\\hline
		$\eps_0 / 2^3$ & 3.24E-1 &	1.69E-1 &	8.10E-2 &	4.13E-2 &	2.02E-2 &	1.02E-2 &	5.13E-3\\
		order & -- &0.94 &	1.06 &	0.97 &	1.03 &	0.99 &	0.99\\\hline
		$\eps_0 / 2^4$ &3.12E-1 &	1.61E-1 &	8.24E-2 &	4.22E-2 &	2.05E-2 &	1.03E-2 &	5.10E-3\\
		order & -- & 0.95 &	0.97 &	0.97 &	1.04 &	0.99 &	1.02\\\hline
		$\eps_0 / 2^5$ & 3.25E-1 &	1.61E-1 &	8.10E-2 &	4.10E-2 &	2.07E-2 &	1.04E-2 &	5.13E-3\\
		order & -- & 1.02 &	0.99 &	0.98 &	0.99 &	0.98 &	1.02\\\hline
		$\eps_0 / 2^6$ & 3.19E-1 &	1.63E-1 &	8.43E-2 &	4.09E-2 &	2.05E-2 &	1.03E-2 &	5.16E-3\\
		order & -- & 	0.97 &	0.95 &	1.04 &	1.00 &	0.99 &	0.99\\\hline
		$\eps_0 / 2^7$ &3.18E-1 &	1.60E-1 &	8.10E-2 &	4.06E-2 &	2.05E-2 &	1.03E-2 &	5.13E-3\\
		order & -- & 0.99 &	0.99 &	0.99 &	0.99 &	0.99 &	1.00\\\hline\hline
		$\max\limits_{0<\eps\leq 1}e^{\eps, \tau}$ & 3.52E-1 &	1.78E-1 &	8.96E-2 &	4.50E-2 &	2.25E-2 &	1.13E-2 &	5.64E-3\\
		order & -- & 0.98 &	0.99 &	0.99 &	1.00 &	1.00 &	1.00
	\end{tabular*}
	{\rule{\temptablewidth}{1pt}}
	\label{table:LDirac_S1_nrs}
\end{table}

\begin{table}[htp]
	\def\temptablewidth{1\textwidth}
	\caption{Discrete $l^2$ temporal errors $e^{\eps, \tau}(t = 4)$ for the wave function with non-resonant time step size, $S_2$ method. }
	{\rule{\temptablewidth}{1pt}}
	\begin{tabular*}{\temptablewidth}{@{\extracolsep{\fill}}ccccccc}
		$e^{\eps, \tau}(t = 4)$& $\tau_0 = 1 / 2$ & $\tau_0 / 4$ & $\tau_0 / 4^2$ & $\tau_0 / 4^3$ & $\tau_0 / 4^4$ & $\tau_0 / 4^5$\\ \hline
		$\eps_0 = 1 / 2$ & 1.69E-1 & 3.85E-3 &	\textbf{2.36E-4} &	1.47E-5 &	9.20E-7 &	5.75E-8\\
		order & -- & 2.73 & \textbf{2.01} &	2.00 &	2.00 &	2.00\\\hline
		$\eps_0 / 2$ & 9.79E-2 &	1.16E-2 &	4.61E-4 &	\textbf{2.83E-5} &	1.77E-6 &	1.10E-7\\
		order & -- & 1.54 &	2.33 &	\textbf{2.01} &	2.00 &	2.00\\\hline
		$\eps_0 / 2^2$ & 6.76E-2 &	3.93E-3 &	1.32E-3 &	5.76E-5 &	\textbf{3.54E-6} &	2.21E-7\\
		order & -- & 2.05 &	0.78 &	2.26 &	\textbf{2.01} &	2.00\\\hline
		$\eps_0 / 2^3$ & 7.86E-2 &	4.49E-3 &	2.63E-4 &	1.72E-4 &	7.59E-6 &	\textbf{4.67E-7}\\
		order & -- & 2.06 &	2.05 &	0.31 &	2.25 &	\textbf{2.01}\\\hline
		$\eps_0 / 2^4$ & 7.55E-2 &	5.04E-3 &	5.33E-4 &	2.64E-5 &	2.14E-5 &	9.43E-7\\
		order & -- & 1.95 &	1.62 &	2.17 &	0.15 &	2.25\\\hline
		$\eps_0 / 2^5$ & \textbf{7.01E-2} &	1.94E-2 &	2.38E-4 &	6.50E-5 &	3.02E-6 &	2.61E-6\\
		order & \textbf{--} & 0.93 &	3.18 &	0.94 &	2.22 &	0.10\\\hline
		$\eps_0 / 2^7$ & 6.84E-2 &	\textbf{2.67E-3} &	2.77E-4 &	2.31E-4 &	2.76E-6 &	1.04E-6\\
		order & -- & \textbf{2.34} &	1.64 &	0.13 &	3.19 &	0.70\\\hline
		$\eps_0 / 2^9$ & 6.84E-2 &	2.67E-3 &	\textbf{1.65E-4} &	1.03E-5 &	2.08E-6 &	2.10E-6\\
		order & -- & 2.34 &	\textbf{2.01} &	2.00 &	1.15 &	-0.00\\\hline
		$\eps_0 / 2^{11}$ & 6.84E-2 &	2.67E-3 &	1.66E-4 &	\textbf{1.03E-5} &	6.53E-7 &	4.53E-8\\
		order & -- & 2.34 &	2.00 &	\textbf{2.00} &	1.99 &	1.92\\\hline
		$\eps_0 / 2^{13}$ & 6.84E-2 &	2.67E-3 &	1.64E-4 &	1.04E-5 &	\textbf{7.51E-7} &	1.51E-7\\
		order & -- & 2.34 &	2.01 &	1.99 &	\textbf{1.89} &	1.16\\\hline\hline
		$\max\limits_{0<\eps\leq 1}e^{\eps, \tau}(t = 4)$ & 1.69E-1 &	1.94E-2 &	4.11E-3 &	2.31E-4 &	2.14E-5 &	2.61E-6\\
		order & -- & 1.56 &	1.12 &	2.08 &	1.72 &	1.52
	\end{tabular*}
	{\rule{\temptablewidth}{1pt}}
	\label{table:LDirac_S2_nrs}
\end{table}

\begin{figure}[htp]
	\vspace{5pt}
	\caption{Order plot for $S_2$ using non-resonant time steps.}
	\includegraphics[width=0.61\textwidth]{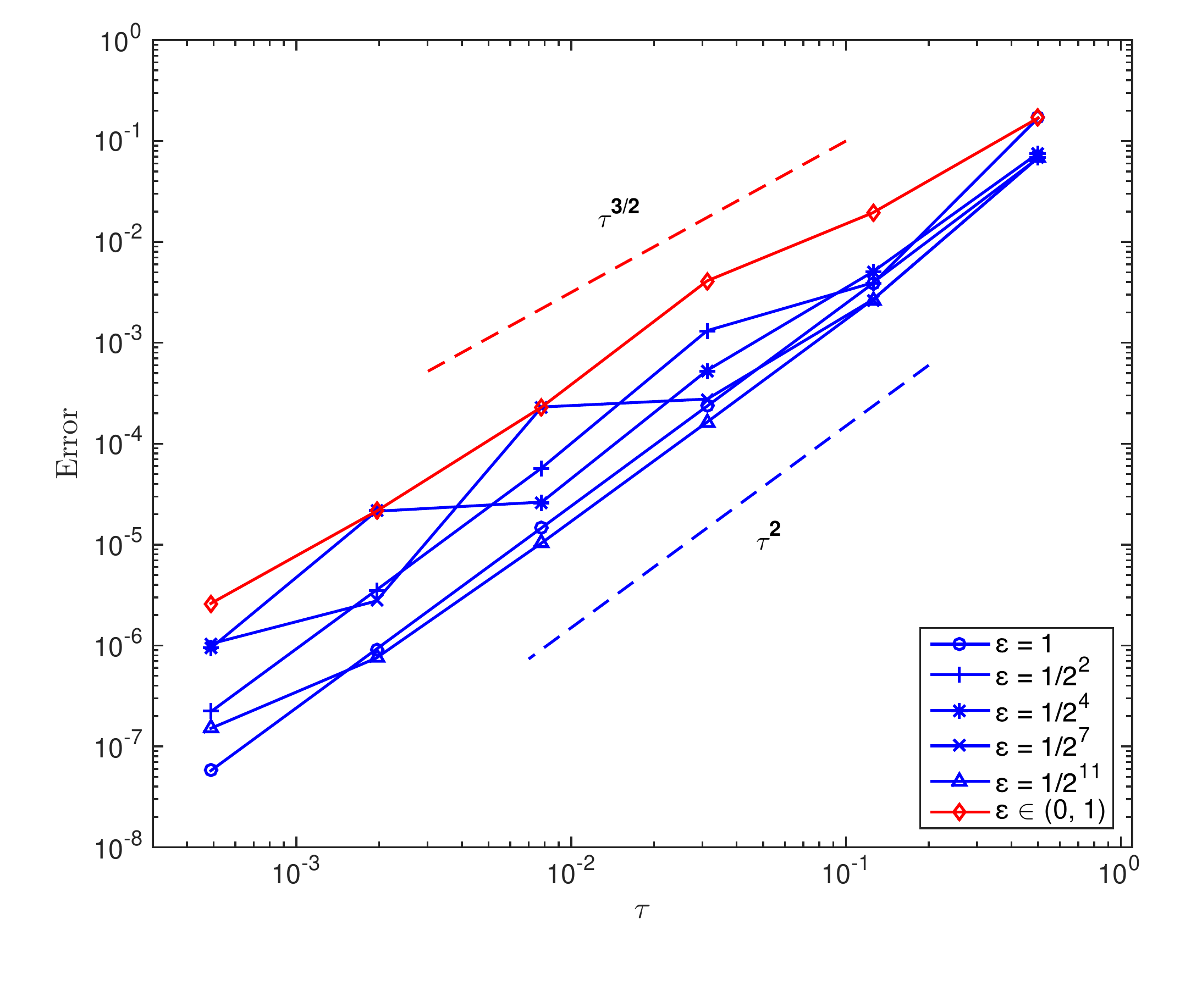}
	\label{fig:S2_nr}
\end{figure}

In Table \ref{table:LDirac_S1_nrs}, we could see that overall, for fixed time step size $\tau$, the error $e^{\eps,\tau}(t = 4)$ does not change with different $\eps$. This verifies the uniform first order convergence in time for $S_1$ with non-resonant time step size, as stated in Theorem \ref{thm:lie2}. In Table \ref{table:LDirac_S2_nrs}, the last two rows show the largest error of each column for fixed $\tau$, which gives $3/2$ order of convergence,  consistent with Theorem \ref{thm:strang2}.
More specifically, in Table \ref{table:LDirac_S2_nrs}, we can observe the second order convergence when $\tau\gtrsim\eps$ (below the lower bolded line) or when $\tau\lesssim\eps^2$ (above the upper bolded line). The lower bolded diagonal  line agrees with the error bound $\|\Phi(t_n,x)-\Phi^n(x)\|_{L^2}\lesssim \tau^2+\tau\eps$, and the upper bolded diagonal line matches the other error bound $\|\Phi(t_n,x)-\Phi^n(x)\|_{L^2}\lesssim \tau^2+\tau^2/\eps$.

Similar to the resonant time step case, Figure \ref{fig:S2_nr} exhibits the order plot for $S_2$ with non-resonant time step sizes. When $\eps$ is relatively large, there is second order uniform convergence for small time step sizes; and when $\eps$ is relatively small, there is second order uniform convergence for large time step sizes. Overall, there is uniform $3/2$ order convergence in time, which corresponds well with Theorem \ref{thm:strang2}.

Through the results of this example, we successfully validate the improved uniform error bounds for the splitting methods in Theorems \ref{thm:lie2} and \ref{thm:strang2}, with non-resonant time step sizes.\\

\noindent\textbf{Example 3} In this example, we deal with a 2D problem. We test the uniform convergence for resonant and non-resonant time step sizes using $S_2$ for \eqref{eq:dirac4_nomag}.

The bounded computational domain is still set as $\Omega = (-10, 10)\times (-10, 10)$.
The numerical `exact' solution is computed by  the $S_2$ method with a very small time step $\tau_e = 2\pi\times 10^{-5}$ for resonant time steps, and $\tau_e = 10^{-5}$ for non-resonant time steps. Spatial mesh size is fixed as $h=1/16$ for all the numerical simulations.

Tables \ref{table:LDirac_S22D_rs} \& \ref{table:LDirac_S22D_nrs} show the numerical errors under resonant and non-resonant time step sizes respectively with different $\varepsilon$.\\

\begin{table}[htp]
	\def\temptablewidth{1\textwidth}
	\caption{Discrete $l^2$ temporal errors $e^{\eps, \tau}(t = 2\pi)$ in 2D for the wave function with resonant time step size, $S_2$ method. }
	{\rule{\temptablewidth}{1pt}}
	
	\begin{tabular*}{\temptablewidth}{@{\extracolsep{\fill}}ccccccc}
		$e^{\eps, \tau}(t = 2\pi)$& $\tau_0 = \pi/16$ & $\tau_0 / 4$ & $\tau_0 / 4^2$ & $\tau_0 / 4^3$
		& $\tau_0 / 4^4$ & $\tau_0 / 4^5$\\ \hline
		$\eps_0 = 1$ &	1.28E-1 &	\textbf{2.56E-3} &	1.57E-4 &	9.78E-6 &	6.08E-7 &	3.41E-8\\
		order & -- & \textbf{2.82} &	2.02 &	2.00 &	2.00 &	2.08\\\hline
		$\eps_0 / 2$ &	4.33E-1 &	6.48E-3 &	\textbf{3.17E-4} &	1.96E-5 &	1.22E-6 &	6.85E-8\\
		order & -- &	3.03 &	\textbf{2.18} &	2.01 &	2.00 &	2.08\\\hline
		$\eps_0 / 2^2$  &	1.01 &	8.71E-2 &	6.99E-4 &	\textbf{3.92E-5} &	2.42E-6 &	1.36E-7\\
		order & -- & 1.76 &	3.48 &	\textbf{2.08} &	2.01 &	2.08\\\hline
		$\eps_0 / 2^3$ & 1.44 &	6.31E-2 &	2.55E-2 &	8.50E-5 &	\textbf{4.88E-6} &	2.73E-7\\
		order & -- &2.26 &	0.65 &	4.11 &	\textbf{2.06} &	2.08\\\hline
		$\eps_0 / 2^4$ &1.46 &	\textbf{5.52E-2} &	1.14E-2 &	9.90E-3 &	1.07E-5 &	\textbf{5.58E-7}\\
		order & -- & \textbf{2.36} &	1.14 &	0.10 &	4.93 &	\textbf{2.13}\\\hline
		$\eps_0 / 2^8$ & 1.46 &	5.22E-2 &	\textbf{3.27E-3} &	5.76E-4 &	5.35E-4 &	5.35E-4\\
		order & -- & 2.40 &	\textbf{2.00} &	1.25 &	0.05 &	0.00\\\hline
		$\eps_0 / 2^{12}$ & 1.46 &	5.22E-2 &	3.22E-3 &	\textbf{2.40E-4} &	1.39E-4 &	1.39E-4\\
		order & -- & 	2.40 &	2.01 &	\textbf{1.87} &	0.39 &	0.00\\\hline
		$\eps_0 / 2^{16}$ &1.46 &	5.22E-2 &	3.22E-3 &	1.99E-4 &	\textbf{1.57E-5} &	5.83E-6\\
		order & -- &2.40 &	2.01 &	2.01 &	\textbf{1.83} &	0.72\\\hline\hline
		$\max\limits_{0<\eps\leq 1}e^{\eps, \tau}$ & 1.46 &	8.71E-2 &	2.55E-2	 & 9.90E-3 &	4.44E-3 &	2.16E-3\\
		order & -- & 2.03 &	0.89 &	0.68 &	0.58 &	0.52
	\end{tabular*}
	{\rule{\temptablewidth}{1pt}}
	\label{table:LDirac_S22D_rs}
\end{table}

\begin{table}[htp]
	\def\temptablewidth{1\textwidth}
	\caption{Discrete $l^2$ temporal errors $e^{\eps, \tau}(t = 4)$ in 2D for the wave function with non-resonant time step size, $S_2$ method. }
	{\rule{\temptablewidth}{1pt}}
	\begin{tabular*}{\temptablewidth}{@{\extracolsep{\fill}}cccccc}
		$e^{\eps, \tau}(t = 4)$& $\tau_0 = 1 / 8$ & $\tau_0 / 8$ & $\tau_0 / 8^2$ & $\tau_0 / 8^3$ & $\tau_0 / 8^4$\\ \hline
		$\eps_0 = 1$ & \textbf{9.41E-3} &	1.43E-4 &	2.23E-6 &	3.47E-8 &	4.92E-10\\
		order & \textbf{--} & 2.01 &	2.00 &	2.00 &	2.05\\\hline
		$\eps_0 / 2^{3/2}$ & 5.54E-2 &	\textbf{3.68E-4} &	5.71E-6 &	8.91E-8 &	1.25E-9\\
		order & -- & \textbf{2.41} &	2.00 &	2.00 &	2.05\\\hline
		$\eps_0 / 2^3$ & 6.56E-1 &	1.23E-3 &	\textbf{1.61E-5} &	2.50E-7 &	3.49E-9\\
		order & -- & 3.02 &	\textbf{2.09} &	2.00 &	2.05\\\hline
		$\eps_0 / 2^{9/2}$ & 3.00E-1 &	3.29E-3 &	5.34E-5 &	\textbf{7.34E-7} &	1.02E-8\\
		order & -- & 2.17 &	1.98 &	\textbf{2.06} &	2.05\\\hline
		$\eps_0 / 2^6$ & \textbf{2.77E-1} &	3.35E-3 &	9.19E-5 &	2.13E-6 &	\textbf{2.64E-8}\\
		order & \textbf{--} & 2.12 &	1.73 &	1.81 &	\textbf{2.11}\\\hline
		$\eps_0 / 2^9$ &2.79E-1 &	\textbf{3.30E-3} &	4.58E-4 &	1.64E-6 &	6.37E-8\\
		order & -- & \textbf{2.13} &	0.95 &	2.71 &	1.56\\\hline
		$\eps_0 / 2^{12}$ & 2.79E-1 &	3.27E-3 &	\textbf{5.08E-5} &	8.57E-7 &	3.09E-7\\
		order & -- & 2.14 &	\textbf{2.00} &	1.96 &	0.49\\\hline
		$\eps_0 / 2^{15}$ & 2.79E-1 &	3.27E-3 &	5.12E-5 &	\textbf{1.24E-6} &	4.45E-7\\
		order & -- & 2.14 &	2.00 &	\textbf{1.79} &	0.49\\\hline\hline
		$\max\limits_{0<\eps\leq 1}e^{\eps, \tau}(t = 4)$ & 6.56E-1 &	1.70E-2 &	4.58E-4 &	1.02E-5 &	4.45E-7\\
		order & -- & 1.76 &	1.74 &	1.83 &	1.51
	\end{tabular*}
	{\rule{\temptablewidth}{1pt}}
	\label{table:LDirac_S22D_nrs}
\end{table}

The conclusions which could be drawn from Table \ref{table:LDirac_S22D_rs} and Table \ref{table:LDirac_S22D_nrs} are similar to those from Table \ref{table:LDirac_S2} and Table \ref{table:LDirac_S2_nrs}. The results validate that the theorems of super-resolution could be extended to 2-dimension, or even higher dimensional cases.

\section{Conclusion}
The super-resolution property of time-splitting methods for the Dirac equation in the nonrelativistic  regime without magnetic potentials were established. We rigorously proved the uniform error bounds, and the improved uniform error bounds with non-resonant time step  for the Lie-Trotter splitting $S_1$ and the Strang splitting $S_2$. For $S_1$,  we have  two independent error bounds $\tau+\eps$ and $\tau + \tau/\eps$, resulting in a uniform 1/2 order convergence. Surprisingly, there will be first order improved uniform convergence if the time step size is non-resonant. For $S_2$, the uniform convergence rate is also 1/2, while the two different error bounds are $\tau^2 + \eps$ and $\tau^2 + \tau^2/\eps^3$ respectively. With non-resonant time step size, the convergence order can be improved to 3/2 for $S_2$, while the two independent error bounds become $\tau^2 + \tau\eps$ and $\tau^2 + \tau^2/\eps$. The numerical results agreed well with the theorems. In this paper,  only  1D case was presented, but indeed the results are still valid in higher dimensions, and the proofs can  be easily generalized. Moreover, higher order time-splitting methods, like the $S_4$, $S_\text{4c}$, $S_\text{4RK}$ methods used in \cite{BY}, also have the super-resolution property for Dirac equation in the nonrelativistic  regime in the absence of external magnetic potentials.

\section*{Appendix}
In this section, we sketch the proofs of Theorems \ref{thm:lie} \&\ref{thm:lie2} for the Lie splitting $S_1$ applied to the four-vector Dirac equation \eqref{eq:dirac4_nomag} in higher dimensions $d=2,3$, as the arguments for the Lie ($S_1$)/Strang splitting ($S_2$)  applied to the four-vector  form \eqref{eq:dirac4_nomag}/two-vector form \eqref{eq:dirac2_nomag} would be similar. In such case, assumptions  (A) and (B)  are directly generalized to the high dimensions ($d=2,3$).

For $d=2,3$, the Lie-Trotter splitting $S_1$ for \eqref{eq:dirac4_nomag}  is
\be\label{eq:Lie-trotter:1}
\Psi^{n+1}(\bx)=e^{-\frac{i\tau}{\eps^2}\mathcal{T}^\eps}e^{-i\int_{t_n}^{t_{n+1}}V(s,\bx)\,ds}\Psi^n(\bx), \quad \bx=(x_1,\ldots,x_d)\in\mathbb{R}^d,
\ee
with $\Psi^0(x) = \Psi_0(x)\in\mathbb{C}^4$, where the free Dirac operator $\mathcal{T}^\eps$ becomes
\begin{equation}
\mathcal{T}^\eps=- \eps\sum_{j = 1}^{d}\alpha_j\partial_j +\beta
\end{equation}
and the decomposition \eqref{eq:dec}  holds with projections $\Pi_\pm^\eps$ given in \eqref{eq:pipm} by replacing $I_2$ with $I_4$. Now, the following expansions for $\Pi_\pm^\eps$ are valid \cite{BMP}
\begin{align}\label{eq:pi+4}
\Pi_+^\eps=\Pi_+^0+\eps\mathcal{R}_1=\Pi_+^0-i\frac{\eps}{2}\sum_{j=1}^d\alpha_j\partial_j+\eps^2\mathcal{R}_2,\quad \Pi_+^0=\text{diag}(1,1,0,0),\\
\label{eq:pi-4}
\Pi_-^\eps=\Pi_-^0-\eps\mathcal{R}_1=\Pi_-^0+i\frac{\eps}{2}\sum_{j=1}^d\alpha_j\partial_j-\eps^2\mathcal{R}_2,\quad \Pi_-^0=\text{diag}(0,0,1,1),
\end{align}
where $\mathcal{R}_1:(H^m({\Bbb R}^d))^4\to (H^{m-1}({\Bbb R}^d))^4$ for $m\geq 1$, $m \in \mathbb{N}^*$, and $\mathcal{R}_2:(H^m({\Bbb R}^d))^4\to (H^{m-2}({\Bbb R}^d))^4$ for $m\geq 2$, $m \in \mathbb{N}^*$ are uniformly bounded operators with respect to $\eps$.

Introduce the error function similar to \eqref{eq:en}
\be\label{eq:e}
{\bf e}^n(\bx) = \Psi(t_n, \bx) - \Psi^n(\bx), \quad 0\leq n\leq \frac{T}{\tau},
\ee
and we will show the  conclusions in Theorems   \ref{thm:lie} \&\ref{thm:lie2}  hold. The proof will be sketched as follows.

(1) {\it Step 1: local error decomposition.} Following the computations in Lemma \ref{lemma:lie}, we have
\be
	{\bf e}^{n+1}(\bx) = e^{-\frac{i\tau}{\eps^2}\mathcal{T}^\eps}e^{-i\int_{t_n}^{t_{n+1}}V(s, \bx)ds}{\bf e}^n(\bx) + \eta_1^n(\bx) + \eta_2^n(\bx), \quad 0\leq n\leq \frac{T}{\tau} - 1,
	\ee
	with $\|\eta_1^n(\bx)\|_{L^2}\lesssim\tau^2$, $\eta_2^n(x) = -ie^{-\frac{i\tau}{\eps^2}\mathcal{T}^\eps}\left(\int_0^\tau f_2^n(s)ds - \tau f_2^n(0)\right)$, where
	\begin{align}\label{eq:f2nlie:A}
	f_2^n(s) = &e^{i2s/\eps^2}e^{is\mathcal{D}^\eps}\Pi_+^\eps\left(V(t_n)\Pi_-^\eps e^{is\mathcal{D}^\eps}\Psi(t_n)\right) \nonumber\\
	&+ e^{-i2s/\eps^2}e^{-is\mathcal{D}^\eps}\Pi_-^\eps\left(V(t_n)\Pi_+^\eps e^{-is\mathcal{D}^\eps}\Psi(t_n)\right).
	\end{align}

(2){\it Step 2: Theorem \ref{thm:lie} for general time steps.} Analogous to the proof of Theorem \ref{thm:lie} in section \ref{sec:general}, the estimates \eqref{eq:pi+eps} and \eqref{eq:pi+eps} hold true for the $d=2,3$ case by noticing the decompositions \eqref{eq:pi+4} and \eqref{eq:pi-4} and $\Pi_\pm^0V(\bx)\Pi_\mp^0=0$. Then the proof of Theorem \ref{thm:lie} for the $d=2,3$ case can be proceeded as the same in section \ref{sec:general}.

(3) {\it Step 3: Theorem \ref{thm:lie2} for non-resonant steps.} Following the proof of Theorem \ref{thm:lie2} for $d=1$ case in section \ref{sec:nonres},  by using the similar estimates as \eqref{eq:pi+4} and \eqref{eq:pi-4} for the $d=2,3$ cases (observed in the above step),  we can derive \eqref{eq:smalle} for the high dimensional cases.  So \eqref{eq:error:lie} is valid. The rest proof for the high dimensional case of Theorem \ref{thm:lie2} ($d=2,3$) can be carried out exactly the same as that in section \ref{sec:nonres}, where only the solution structure \eqref{eq:split_Pauli} of the Dirac equation  is used and such structure is valid in $d=2,3$  \cite{BCJT,BMP}.

As can be seen in the above generalizations to the higher dimensions $d=2,3$, the estimates  \eqref{eq:pi+4} and \eqref{eq:pi-4} play the key roles, which ensures that $\Pi_\pm^\varepsilon V(t,\bx)\Pi_\mp^\eps=O(\varepsilon)$ (valid for  $d=1,2,3$, two-vector form and/or four-vector form). However,  such $O(\varepsilon)$ oder estimates do not hold if electrical potential $V(t,\bx)$ is replaced by the external magnetic potentials and we can only obtain the stated results in this paper for the Dirac equation without magnetic potentials.

\bibliographystyle{amsplain}

\end{document}